\numberwithin{equation}{section}
\newtheorem{theorem}{Theorem}[section]
\newtheorem{cor}[theorem]{Corollary}
\newtheorem{Lemma}[theorem]{Lemma}
\newtheorem{proposition}[theorem]{Proposition}
\newtheorem{rem}[theorem]{Remark}
\begin{document}

\title{Concentration inequalities for Gibbs measures.}
\author{Ioannis Papageorgiou \thanks{Institut de Math\'ematiques de Toulouse, Universit\'e Paul Sabatier, 31062 Toulouse cedex 09, France. Email: ioannis.papageorgiou@math.univ-toulouse.fr}}

\date{}
\maketitle

\begin{abstract}
We are interested in Sobolev type inequalities and their relationship with concentration properties on higher dimensions. We consider unbounded spin systems on the d-dimensional lattice with interactions that increase slower than a quadratic. At first we assume that the one site measure satisfies a Modified log-Sobolev inequality with a constant uniformly on the boundary conditions and we determine conditions so that the infinite dimensional Gibbs measure satisfies a concentration  as well as a Talagrand type inequality, similar to the ones obtained by Barthe and Roberto [B-R2] for the product measure.
Then a Modified Log-Sobolev type  concentration property is obtained under weaker conditions referring to the  Log-Sobolev inequalities for the boundary free measure.
\end{abstract}

\section{Introduction.}
We focus
on the Modified Logarithmic Sobolev inequality (see [G-G-M1]) as well as on  Log-Sobolev inequalities  [G] for unbounded spin systems on the d-dimensional lattice with interactions that increase slower than a quadratic. We investigate two problems related with concentration properties. The first, presented in Section 2, relates to the Modified Log-Sobolev inequality for the one site measure with boundary conditions. Suppose  that the one site measure satisfies a Modified log-Sobolev inequality with a constant uniformly on the boundary conditions. The aim of this work is to determine conditions  under which the infinite dimensional Gibbs measure satisfies a concentration inequality (Theorem \ref{theorem1}). As a consequence, a Talagrand type inequality is also obtained (Corollary \ref{cor1}).
The second, presented in Section 3, is a perturbation problem  that refers  to the  Log-Sobolev inequality for the boundary free one site measure. Suppose  that the boundary free one site (non log-concave) measure satisfies a  Log-Sobolev inequality.  The aim   here is to determine conditions  under which if we perturbe the measure with interactions,  the infinite dimensional Gibbs measure of the corresponding local specification satisfies a concentration inequality (Theorem \ref{NStheorem1}). Here too, as a consequence, a Talagrand type inequality is also obtained (Corollary \ref{NScor1}).

 The problem of concentration of measure properties for product measures that satisfy a Modified Log-Sobolev inequality has been recently examined (see for instance [G-G-M1] and [B-R2]). In addition the problem of the stronger  Log-Sobolev inequalities for the infinite dimensional Gibbs measure has been addressed (e.g. [G-Z], [O-R] and [I-P]), as well as the relationship of Logarithmic Sobolev inequalities with concentration properties (e.g. [B-Z] and [B-L1]). 

In this section we present the most important notions and developments related to the Sobolev type inequalities and the Concentration of measure properties.

\textbf{\textit{Sobolev type inequalities.}} The Logarithmic Sobolev   inequalities have been extensively investigated. For $1<q\leq 2$, a measure $\mu$ on $\mathbb{R}^n$ satisfies the $q$ Log-Sobolev inequality if
there exists a positive constant $C$ such that$$Ent_{\mu}(\vert f\vert^q)\leq C\mu \left\vert \nabla f\right \vert ^q  \   \   \   \    \   \  \   \    (LSq)$$
where $Ent_\mu (f)=\mu\left(f\log\frac{f}{\mu f}\right)$ and $\left\vert \nabla f\right \vert $ is the Euclidean length of the gradient $\nabla f$ of the function $f$.  The Logarithmic Sobolev inequalities where first introduced for $q=2$ by [G] and later for $1<q<2$
by [B-Z]. If a measure $\mu$ satisfies  the  q Log-Sobolev inequality, then it also satisfies the q Spectral Gap inequality, that is
$$\mu \vert f-\mu f\vert^q\leq C'\mu \left\vert \nabla f\right\vert^q \   \   \   \   \   (SGq)$$
for some constant $C'$.
 
Recently a lot of attention has been focused on inequalities that interpolate between the  Log-Sobolev inequality and the Spectral Gap inequality (both in the classical sense of $q=2$). For a detailed account of these developments one can look on [G-G-M1] and [B-R2].  A first example of an  inequality interpolating between the Log-Sobolev and the Spectral Gap was introduced  by [Be] and then studied by [L-O]  and [B-R1].  In this paper we are interested in the Modified Log-Sobolev inequality  introduced by [G-G-M1], (see also [G-G-M2] and [B-R2]). Before we give the definition of the inequality we first  present some definitions. A function $\Phi:\mathbb{R}\rightarrow[0,\infty)$ is called a \textit{Young function} if it is convex, even, $\Phi(0)=0$ and $\lim_{x\rightarrow\infty}\Phi(x)=+\infty$.
We define the \textit{conjugate} function $\Phi^*$ to be the Legendre transform of $\Phi$,  $\Phi^*(y)=\sup_{x\geq0}\left\{ x\vert y \vert-\Phi(x)\right\}$. Furthermore, a young function $\Phi$ is called  \textit{nice} if $\lim_{x\rightarrow \infty}\frac{\Phi (x)}{x}=\infty$, $\Phi(x)=0\iff x=0$ and $\Phi'(0)=0$.
One can consider the following example of a nice  Young function, $\Phi(x)=\frac{\vert x\vert ^p}{p}$ with the conjugate being  $\Phi ^*(x)=\frac{\vert x \vert ^q}{q}$ where $p>1$ and $\frac{1}{p}+\frac{1}{q}=1$. Given a nice Young function $\Phi:\mathbb{R}\rightarrow\mathbb{R}^+$we can define the \textit{modification} of $H_{\Phi}$ of $\Phi$ to be
\begin{align*}H_{\Phi}(x)=\begin{cases}x^{2} & \text{if \ } \vert x\vert \leq 1 \\
\frac{\Phi(\vert x \vert)}{\Phi(1)}& \text{if \ }\vert x\vert \geq\ 1 \end{cases}
\end{align*}
The definition of the  MLS$(H_\Phi)$ inequality follows.

\noindent
\textit{The Modified  Log-Sobolev  Inequality on $\mathbb{R}^n$.} 
We say that the measure $\nu$ on $\mathbb{R}^n$ satisfies the Modified Log-Sobolev Inequality if there exists a positive  constant $C_{MLS}$   such that for any function $f\in C^\infty$  the following holds
$$\nu\vert f\vert^2 log\frac{\vert f\vert^2}{\nu\vert f\vert^2}\leq C_{MLS}
\int  \sum_{i=1}^n H_\Phi\left(\frac{ \nabla _if}{f}\right)f^2d\nu  \   \   \   \   \   \   \ (MLS(H_\Phi))$$
  The following remark summarises some of the main properties of the Modified Logarithmic Sobolev inequalities and (LSq) inequalities.
\begin{rem}\label{remark0}\rm(i)  (MLS), (LSq) and (SGq) are stable under tensorisation: Suppose $\mu_1$ and $\mu_2$ satisfy $(MLS)/(LSq)/(SGq)$ inequalities with constants $c_1$ and $c_2$ respectively.  Then $\mu_1\otimes\mu_2$ satisfies an $(MLS)/(LSq)/(SGq)$ inequality with constant $\max\{c_1, c_2\}$.

\rm{(ii) \rm$(MLS)/(LS2)\Rightarrow (SG2)$:} Suppose $\mu$ satisfies an $(MLS)/(LS2)$ inequality with constant $c$.  Then $\mu$ satisfies a  $SG_2$ inequality  with constant $c_0\leq \frac{c}{2}$ i.e.
\[
\mu\left|f - \mu f\right|^2 \leq c_{0} \mu\left(|\nabla f|^2\right)
\]
for all smooth $f$.
\end{rem}
The proof of the remark can be found in [A-B-C].

\textit{\textbf{Concentration of Measure Phenomenon.}} An  important future of Sobolev type inequalities is their relationship with concentration of measure properties. The concentration inequality was first investigated in [T1] and [T2], as well as in [B-L1], while the case of the Log-Sobolev q inequalities was investigated in [B-L2] and [B-Z].
 
Consider a measure $\mu$ on $\mathbb{R}^n$ that satisfies the (LSq) or (MLS($H_\Phi$)) inequality. Then for every Lipschitz function on $\mathbb{R}$ with $\Vert F \Vert _{Lip}\leq 1$, for some constant K and any $r>0$
\begin{align*}\mu \left( F- \mu F\geq r\right)\leq e^{-KE(r)}
\end{align*}
with $E(r)=r^\frac{q}{q-1}$ in the case where $\mu$ satisfies the (LSq) inequality (see [B-L2] and [B-Z]), as well as the (MLS$H_{\vert x \vert^q}$) inequality (see [G-G-M1]). When $\mu$ satisfies an (MLS($H_\Phi$)) inequality  $E(r)=r\omega_{H}^*(r)$ (see [B-R2]).

In particular, concerning the Modified Log-Sobolev inequality MLS($H_\Phi$) for the  product measure on $\mathbb{R}^n$ and its relationship with concentration of measure,  in [B-R2] the following theorem was shown
\begin{theorem}\label{Theorem[B-R2]}\textbf{[B-R2]} Let $\mu$ be a probability measure on $\mathbb{R}$, which we assume to be absolutely continuous with respect to Lebesgue's measure. Let $H:\mathbb{R}\rightarrow \mathbb{R}^+$ be an even convex function, with $H(0)=0$. Assume that  $x\rightarrow \frac{H(x)}{x^2}$ is non-decreasing for $x>0$ and that $H^*$ is strictly convex. If $\mu$ satisfies an $MLS(H)$ with constant $k$,  then for every  Borel set $A\subset \mathbb{R}^n$ with $\mu^n(A)\geq\frac{1}{2}$ 
\begin{align}1-\mu^n\left( A+\left\{ x:\sum_{i=1}^nH^*(x_1)<r\right\}  \right)\leq e^{-Kr}    \   \    \forall r\geq 0
\end{align}where $K=\omega_H(2)k\omega^*_H(\frac{1}{\omega_H(2)k})$.
\end{theorem}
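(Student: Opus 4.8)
My plan is to deduce the set‑enlargement bound from a one‑sided exponential deviation estimate for a single auxiliary function, the estimate itself coming from a Herbst‑type argument powered by $MLS(H)$. First I tensorise: by Remark \ref{remark0}(i), since $\mu$ satisfies $MLS(H)$ with constant $k$, the product $\mu^n$ satisfies $MLS(H)$ with the \emph{same} constant $k$, so it suffices to work on $\R^n$ with $\mu^n$. Writing $B_r=\{x\in\R^n:\sum_{i=1}^n H^*(x_i)<r\}$, I introduce the inf‑convolution cost to $A$,
$$\phi(x)=\inf_{a\in A}\sum_{i=1}^n H^*(x_i-a_i),$$
so that $A+B_r=\{\phi<r\}$ and therefore $1-\mu^n(A+B_r)=\mu^n(\phi\geq r)$. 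Since $H^*(0)=0$ we have $\phi\equiv0$ on $A$, whence $\mu^n(\phi=0)\geq\mu^n(A)\geq\tfrac12$; in particular $0$ is a median of $\phi$, a fact I will use to remove any spurious prefactor at the end.

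The core of the argument is a self‑bounding estimate for the gradient of $\phi$. The assumption that $H^*$ is strictly convex is what guarantees that the infimum defining $\phi(x)$ is attained at a unique point $a^*(x)$ and that $\phi$ is differentiable almost everywhere, with $\partial_i\phi(x)=(H^*)'\big(x_i-a_i^*(x)\big)$. Setting $y_i=x_i-a_i^*(x)$, the Legendre identity yields $H\big((H^*)'(y_i)\big)=y_i(H^*)'(y_i)-H^*(y_i)$. The hypothesis that $x\mapsto H(x)/x^2$ is non‑decreasing dualises, after a short computation with the Legendre transform, into the bound $y(H^*)'(y)\leq 2H^*(y)$; substituting this into the previous identity gives $H\big((H^*)'(y_i)\big)\leq H^*(y_i)$ and hence the key pointwise inequality
$$\sum_{i=1}^n H(\partial_i\phi)\ \leq\ \sum_{i=1}^n H^*(y_i)\ =\ \phi .$$
Thus the Dirichlet‑type energy appearing on the right of $MLS(H)$ is controlled by $\phi$ itself.

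I then run Herbst's method. Applying $MLS(H)$ to $f=e^{\lambda\phi/2}$ — first to the truncations $\min(\phi,M)$ and letting $M\to\infty$ to legitimate the unbounded test function — and using $\nabla_i f/f=\tfrac{\lambda}{2}\,\partial_i\phi$ together with the scaling modulus $\omega_H$ to pass from $H\big(\tfrac{\lambda}{2}\partial_i\phi\big)$ to a multiple of $H(\partial_i\phi)$, the self‑bounding inequality turns the right‑hand side into $k\,\omega_H(\tfrac{\lambda}{2})\,\mu^n\big(\phi\,e^{\lambda\phi}\big)$. Writing $\Lambda(\lambda)=\log\mu^n\big(e^{\lambda\phi}\big)$ and recognising $\mu^n(\phi e^{\lambda\phi})=\tfrac{d}{d\lambda}\mu^n(e^{\lambda\phi})$, the inequality $Ent_{\mu^n}(e^{\lambda\phi})\leq k\,\omega_H(\tfrac{\lambda}{2})\,\mu^n(\phi e^{\lambda\phi})$ becomes the first‑order differential inequality
$$\big(\lambda-k\,\omega_H(\tfrac{\lambda}{2})\big)\,\Lambda'(\lambda)\ \leq\ \Lambda(\lambda).$$
Because $\phi$ self‑bounds, the coefficient $\lambda-k\,\omega_H(\tfrac{\lambda}{2})$ vanishes at a finite critical value $\lambda_0$, so integrating the inequality keeps $\Lambda$ finite only up to $\lambda_0$ and yields an exponential‑moment bound $\mu^n(e^{\lambda\phi})<\infty$ for every $\lambda<\lambda_0$.

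Finally I convert this into the deviation bound. By Chebyshev's inequality $\mu^n(\phi\geq r)\leq e^{-\lambda r}\mu^n(e^{\lambda\phi})$ for every $\lambda<\lambda_0$; optimising over $\lambda$ and using that $0$ is a median of $\phi$ to discard the prefactor, the linear‑in‑$r$ exponent $e^{-\lambda_0 r}$ survives, which is the asserted $e^{-Kr}$. I expect the main obstacle to be purely quantitative: extracting the \emph{sharp} constant. One must track the scaling modulus $\omega_H$ and its conjugate through the self‑bounding step and the final Legendre‑type optimisation of $\inf_\lambda\{-\lambda r+\Lambda(\lambda)\}$ so that the critical exponent comes out in the precise closed form $K=\omega_H(2)\,k\,\omega_H^*\big(1/(\omega_H(2)k)\big)$; the appearance of a value at $2$ reflects the square in the $f^2$ normalisation of $MLS(H)$ and the factor $\tfrac{\lambda}{2}$ produced by the substitution $f=e^{\lambda\phi/2}$, while the conjugate $\omega_H^*$ is exactly what the optimisation over $\lambda$ generates.
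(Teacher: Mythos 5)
The paper does not reprove this statement---it is imported from [B-R2]---but the same machinery is redeployed in the proof of Theorem \ref{theorem1}, and comparing your route with that one exposes a genuine gap. Your skeleton (inf-convolution cost $\phi=F_A$, Herbst's method driven by $MLS(H)$, Chebyshev) is the right one, and your pointwise computation $H\bigl((H^*)'(y)\bigr)=y(H^*)'(y)-H^*(y)\le H^*(y)$ is a correct use of the Legendre identity together with the dualised monotonicity $y(H^*)'(y)\le 2H^*(y)$ (Lemma \ref{Young1} with $a=a^*=2$). The problem is how you exploit it. The self-bounding inequality $\sum_i H(\partial_i\phi)\le\phi$ feeds the differential inequality $\bigl(\lambda-k\,\omega_H(\lambda/2)\bigr)\Lambda'(\lambda)\le\Lambda(\lambda)$, and integrating this only yields $\mu^n\bigl(e^{\lambda\phi}\bigr)=e^{\Lambda(\lambda)}<\infty$ for $\lambda$ below the critical value $\lambda_0$. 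Chebyshev then gives $\mu^n(\phi\ge r)\le e^{\Lambda(\lambda)}e^{-\lambda r}$ with a prefactor $e^{\Lambda(\lambda)}\ge e^{\lambda\,\mu^n\phi}\ge 1$ that blows up as $\lambda\uparrow\lambda_0$. The theorem asserts prefactor exactly $1$ for \emph{every} $r\ge 0$, and ``using that $0$ is a median of $\phi$ to discard the prefactor'' is not an argument---nothing in your scheme produces that. Moreover the rate your method outputs is $\lambda_0$, the root of $\lambda=k\,\omega_H(\lambda/2)$, which is not the stated $K=\omega_H(2)k\,\omega_H^*\bigl(1/(\omega_H(2)k)\bigr)$; and your guess that the $\omega_H(2)$ comes from the $f^2$ normalisation is off the mark.

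The actual proof (Theorems 26--27 of [B-R2], reused verbatim in the proof of Theorem \ref{theorem1}) avoids both problems by truncating: one works with $F=\min(F_A,r)$, for which the energy bound becomes the \emph{uniform} a.e.\ estimate $\sum_i H(\nabla_i F)\le \omega_H(2)\,r$. This is where $\omega_H(2)$ really enters, via $(H^*)'(y)\le 2H^*(y)/y$ and $H\bigl(2H^*(y)/y\bigr)\le\omega_H(2)\,H\bigl(H^*(y)/y\bigr)\le\omega_H(2)H^*(y)$; the truncation also confines all differentiability issues to the sublevel set $\{F_A<r\}$, where near-minimisers stay in a bounded window. With the constant bound $a=\omega_H(2)r$ in hand, Herbst's argument in its standard form (the paper's Lemma \ref{lem2.3} with $K_f=ka$) gives $\log\mu^n e^{\lambda(F-\mu^n F)}\le ka\,\omega_H(\lambda/2)$ and hence $\mu^n\bigl(F-\mu^n F\ge \tfrac r2\bigr)\le e^{-ka\,\omega_H^*(r/(ka))}=e^{-Kr}$ after optimising in $\lambda$, with no prefactor. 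Since $F_A=0$ on $A$ and $\mu^n(A)\ge\tfrac12$ one has $\mu^n F\le \tfrac r2$, so $\{F\ge r\}\subset\{F-\mu^n F\ge \tfrac r2\}$ and the stated bound follows. If you replace your self-bounding step by this truncation, the rest of your outline goes through.
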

As a consequence, the following Talagrand type inequality was then obtained
 \begin{equation}\label{Talagrand}\mu ^n\left(A+\sqrt{r}B_2+\frac{1}{\omega_{\Phi^*}^{-1}(\frac{1}{r})}B_{\Phi^*}\right)\geq 1-e^{-Cr}
\end{equation}
where we denote $B_{\Phi^*}:=\{x:\sum_{i=1}^n\Phi^*(x_i)<1\}$.

\textit{\textbf{Infinite dimensional analysis.}} An interesting question is to determine conditions so that the infinite dimensional Gibbs measure satisfies the  Log-Sobolev inequality.  In the case where the interactions are quadratic this problem has been extensively studied in the case $q=2$, for example in [Ba], [B-E], [B-H], [G-Z],   [L], [O-R], [Z1] and [Z2].  The case $1<q\leq2$ was looked at in [I-P], while the case of the  Spectral Gap inequality was addressed in [G-R].

The Infinite Dimensional Setting: We consider the  d-dimensional
Lattice $\mathbb{Z}^d$ for some
$d \in \mathbb{N}$.  For any subset $\Lambda$ we denote $\left\vert \Lambda \right\vert$ the cardinality of $\Lambda$. When $\left\vert \Lambda \right\vert<+\infty$,
we will write $\Lambda \subset \subset \mathbb{Z}^d $.  We consider continuous unbounded random variables in $\mathbb{R}$, representing spins or particles. Our configuration space is $\Omega=\mathbb{R}^{\mathbb{Z}^d}$. For any
$\omega\in\Omega$ and $\Lambda \subset\subset \mathbb{Z}^d$ we denote
$\omega=(\omega_i)_{i\in \mathbb{Z}^d}, \omega_{\Lambda}=(\omega_i)_{i\epsilon\Lambda} \text{\; and \;}   \omega_{\Lambda^c}=(\omega_i)_{i\epsilon\Lambda^c} $ 
where $\omega_i \in \mathbb{R}$. When $\Lambda=\{i\}$ we will write $\omega_i:=\omega_{\{i\}}$.
 We consider integrable functions $f$ that depend on a finite set of variables
$(x_i)_{\Sigma_f}$ for  finite subset $\Sigma_f\subset\subset \mathbb{Z}^d$.
 For any subset $\Lambda\subset\subset
\mathbb{Z}^d$ we define the probability measure
\begin{equation}\label{locspec}\mathbb{E}^{\Lambda,\omega}(dX_\Lambda) =
\frac{e^{-H^{\Lambda,\omega}}dX_\Lambda} {Z^{\Lambda,\omega}} \end{equation}
where
 \begin{itemize}
\item $X_{\Lambda}=(x_i)_{i\epsilon\Lambda}$, $dX_\Lambda=\prod_{i\epsilon\Lambda}dx_i$ and $Z^{\Lambda,\omega}=\int e^{-H^{\Lambda,\omega}}dX_\Lambda$
\item $i\sim j$ means that the nodes $i$ and $j$   on $\mathbb{Z}^d$ are neighbours
\item 
$H^{\Lambda,\omega}=\sum_{i\epsilon\Lambda}
\phi (x_i)+\sum_{i\epsilon\Lambda,j\sim
i}J_{ij}V(x_i
,z_j)$,  for $J_{ij}$ constants and $V\in C^\infty$.
\end{itemize}
\noindent
and
\begin{itemize}
\item
$z_j=(x_{\Lambda}\circ\omega_{\Lambda^c})_j=\begin{cases}x_j & ,j\epsilon\Lambda  \\
\omega_j & ,j\notin\Lambda \\
\end{cases}$
\end{itemize}
We call $\phi$ the phase and $V$ the potential of the interaction.
We assume that $\vert J_{i,j}\vert\in [0,J_0]$ for some positive $J_0$. When $\Lambda=\{ i \}$ we will also write
$\mathbb{E}^{i,\omega}:=\mathbb{E}^{\{i \},\omega}$. Furthermore, we denote $\{\sim i\}:=\{j\in \mathbb{Z}^d:j\sim i\}$, while for any $\Lambda\subset \mathbb{Z}^d$, $\sim \Lambda:=\{j\in \mathbb{Z}^d\smallsetminus\Lambda:j\sim i \text{ \ for some \ }i\in \Lambda\}$. We will denote   $$\mathbb{E}^{\Lambda,\omega}f=\int f d \mathbb{E}^{\Lambda,\omega}(X_\Lambda)$$   The Gibbs measure $\mu$ for the local specification
$\{\mathbb{E}^{\Lambda,\omega}\}_{\Lambda\subset\subset \mathbb{Z}^d,\omega \in
\Omega}$ is defined as the  probability measure which solves the DLR equation
$$ \mu\mathbb{E}^{M,\ast} f=\mu f $$ 
for all finite  sets  $\Lambda\subset\subset \mathbb{Z}^d$ and bounded measurable functions $f$ on $\Omega$ (see [Pr]). For criterions on the existence and uniqueness of the Gibbs measure one can look at [D] and [B-HK].  For any subset $\Lambda\subset \mathbb{Z}^d$ and $\nabla_{i}=\frac{\partial}{\partial x_i}$ we define the gradient
$$\left\vert \nabla_{\Lambda} f
\right\vert^q=\sum_{i\in\Lambda}\left\vert \nabla_{i} f
\right\vert^q \  \ \text{for \;} q \in (1,2] $$ and the higher dimensional analogue of the function $H_\Phi$ as $$H_{\Phi}\left(\frac{\nabla_\Lambda f}{f}\right)=\sum_{i\in \Lambda}H_{\Phi}\left(\frac{\nabla_i f}{f}\right)$$
When $\Lambda=\mathbb{Z}^d$ we will simply write $\nabla:=\nabla_{\mathbb{Z}^d}$. For this setting the analogues of the Modified Logarithmic Sobolev and Spectral Gap inequalities can be defined. In [I-P]   concerning the (LSq) $q\in (1,2]$ inequality for the infinite dimensional Gibbs measure the following theorem was shown. \begin{theorem}\label{Theorem[I-P]} \textbf{([I-P])} Suppose that the local specification $\{\mathbb{E}^{\Lambda,\omega}\}_{\Lambda \subset\subset \mathbb{Z}^d,\omega\in \Omega}$ as in (\ref{locspec}) has interactions $V$  such that $\left\Vert \nabla_i \nabla_j V(x_i,x_j) \right\Vert_{\infty}<\infty$ and  $\mathbb{E}^{\{ i \},\omega}$  satisfies an (LSq) inequality for some positive constant, uniformly on the boundary conditions $\omega\subset \Omega$. Then, for sufficiently small $J_0$, the corresponding infinite dimensional Gibbs measure $\mu$ is unique and satisfies an  (LSq) inequality.\end{theorem}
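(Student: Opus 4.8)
The plan is to establish a finite-volume (LSq) inequality for each $\mathbb{E}^{\Lambda,\omega}$ with a constant uniform in the volume $\Lambda$ and in the boundary condition $\omega$, and then to pass to the infinite-volume limit. The starting point is the hypothesis that every single-site measure $\mathbb{E}^{\{i\},\omega}$ satisfies (LSq) with one fixed constant $c$. The structural fact I would exploit is that the interactions are nearest-neighbour: splitting the lattice into its even and odd sublattices $\mathbb{Z}^d=\Lambda_+\cup\Lambda_-$ (a chessboard decomposition), conditionally on the configuration of one sublattice the spins on the other sublattice are independent. Hence the conditional measures $\mathbb{E}_+:=\prod_{i\in\Lambda\cap\Lambda_+}\mathbb{E}^{\{i\},\cdot}$ and $\mathbb{E}_-$ are product measures, and by the tensorisation property of Remark \ref{remark0}(i) each of them again satisfies (LSq) with the \emph{same} constant $c$.

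The second ingredient is a gradient-perturbation estimate controlling how a single-site conditional expectation redistributes derivatives onto neighbouring sites. Differentiating $\mathbb{E}^{\{i\},\omega}f$ in a variable $x_j$ with $j\sim i$ produces, besides the term $\mathbb{E}^{\{i\},\omega}\nabla_j f$, a covariance contribution coming from the dependence of the Gibbs factor $e^{-H^{\{i\},\omega}}$ on $x_j$ through $J_{ij}V(x_i,x_j)$. This contribution is proportional to $J_{ij}$ and is bounded, using the single-site (SGq) inequality (which follows from the assumed single-site (LSq)) together with the hypothesis $\|\nabla_i\nabla_j V\|_\infty<\infty$, by a constant multiple of $J_0\|\nabla_i\nabla_j V\|_\infty\,\mathbb{E}^{\{i\},\omega}|\nabla_i f|$. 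Thus I obtain an inequality of the schematic form
$$\bigl|\nabla_j\,\mathbb{E}_+ f\bigr|\le \mathbb{E}_+|\nabla_j f| + A\,J_0\sum_{i\sim j}\mathbb{E}_+|\nabla_i f|$$
with $A$ depending only on $c$ and $\sup_{i\sim j}\|\nabla_i\nabla_j V\|_\infty$; this encodes the ``leakage'' of gradient mass from a site to its neighbours under one sweep.

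With these two ingredients in hand I would run the sweeping-out (iteration) scheme. Writing the relative entropy along the alternating sequence of conditional expectations $(\mathbb{E}_+\mathbb{E}_-)^n$ and using the standard decomposition of entropy into a telescoping sum of conditional entropies, each conditional entropy is controlled by the product (LSq) inequality for $\mathbb{E}_\pm$, producing gradient terms that are then propagated by the estimate above. Summing the series reduces the problem to the convergence of a geometric series whose ratio is governed by the operator norm of the leakage matrix with entries of order $A J_0$ indexed by neighbouring sites. For $J_0$ small enough this norm is strictly less than one, so the series converges and yields
$$Ent_{\mathbb{E}^{\Lambda,\omega}}(|f|^q)\le C\,\mathbb{E}^{\Lambda,\omega}|\nabla_\Lambda f|^q$$
with $C=C(c,J_0,d)$ independent of $\Lambda$ and $\omega$. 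The same contraction shows that $(\mathbb{E}_+\mathbb{E}_-)^n$ converges, giving uniqueness of the Gibbs measure, and the uniform bound passes to the limit to give the claimed (LSq) inequality for $\mu$.

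I expect the main obstacle to be the gradient-perturbation estimate together with the convergence of the sweeping series. Quantifying the covariance term sharply enough, so that its prefactor really is of order $J_0$ and the resulting leakage operator is a genuine contraction for small $J_0$ uniformly in $\Lambda$, is the delicate analytic heart of the argument; it is precisely here that the uniform single-site (SGq)/(LSq) constant and the boundedness of $\nabla_i\nabla_j V$ (the assumption that the interaction grows slower than quadratic) are indispensable. Handling the $\ell^q$ structure of the gradient for $1<q\le 2$, rather than the Hilbertian case $q=2$, requires Hölder-type rearrangements in these estimates but does not change the overall scheme.
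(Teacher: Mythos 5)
This theorem is quoted from [I-P] and the paper itself gives no proof of it; the closest material here is the Section 4 machinery (the even/odd decomposition $\Gamma_0,\Gamma_1$, the gradient-leakage estimate of Lemma \ref{lem252PapIP} with $\eta=O(J_0^2)$, and the Borel--Cantelli convergence of $\mathcal{B}^{n,s}f$ and uniqueness in Lemma \ref{lem254PapIP}). Your plan --- tensorisation of the single-site (LSq) over a chessboard sublattice, a covariance/sweeping-out gradient estimate with leakage of order $J_0$ controlled by the single-site spectral gap and $\|\nabla_i\nabla_j V\|_\infty$, a telescoping entropy decomposition summed as a geometric series, and uniqueness from the same contraction --- is exactly that strategy, so it is essentially the same approach as the cited proof.
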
  Furthermore, from [B-Z], for every $f:\Omega \rightarrow \mathbb{R}$  such that $\Vert \left\vert \nabla f
\right\vert^q\Vert_\infty<1$ the following concentration property  holds
\begin{equation*}\mu \left\{\left\vert  f-\int fd\mu\right\vert\geq r\right\}\leq 2\exp\left\{-\frac{(q-1)^p}{C^{p-1}}r^p\right\}
\end{equation*}
for all $r>0$, where $\frac{1}{p}+\frac{1}{q}=1$ and $C>0$.
  \section{Modified LS and Concentration for the Gibbs measure.}  In the previous section we discussed the importance of the Modified Log-Sobolev inequalities as interpolation inequalities between the Log-Sobolev  and the Spectral Gap inequalities. Furthermore, we saw how  the different types of Sobolev  inequalities are related  to concentration of measure properties and the Talagrand inequalities.
 
At the end we discussed how the (LS) inequality in the case of quadratic interactions can be extended from the one dimension to the infinite dimensional Gibbs measure, which then implies the analogous concentration properties.
 
The question that arises is whether similar results for the infinite dimensional Gibbs measure can be obtained in the case of the Modified Log-Sobolev inequality. If the Modified Log-Sobolev inequality is proven to be true for the infinite Gibbs measure then automatically the appropriate concentration of measure properties will follow.
 
Although, a result similar with the one obtained for the (LS) in  [I-P] would had been desirable, it appears that the methodology followed to obtain the q Log-Sobolev inequality for the Gibbs measure in [I-P] when interactions are quadratic cannot be applied directly in the case of the Modified Log-Sobolev inequality. If this was the case, then we could obtain as a byproduct the concentration of measure as well. However, although the inequality itself cannot be shown for the infinite dimensional Gibbs measure, the weaker  concentration property for the Gibbs measure can be directly obtained as shown on the  Theorem \ref{theorem1}. In this way, we show that the result of [B-R2] for the product measure holds also for the infinite dimensional Gibbs measure. We will work with the following hypotheses: 
\begin{itemize} 
\item[\textbf{(H0):}] The interaction $V$ is such that $\left\Vert \nabla_i \nabla_j V(x_i,x_j) \right\Vert_{\infty}<\infty.$
 
\item[\textbf{(H1):}] The one dimensional single site measures $\mathbb{E}^{i,\omega}$ satisfy $(MLS(H_\Phi))$ inequality with a constant $c$ which is independent of the boundary conditions $\omega$.

\item[\textbf{(H2):}]  $\frac{H_\Phi(x)}{x^2}$ is non-decreasing on $(0,+\infty)$ and there exists a $t>2$ such that   $\frac{H_\Phi(x)}{x^t}$ is non-increasing on $(0,+\infty)$.
\end{itemize}
The main result follows.
\begin{theorem} \label{theorem1}Assume the local specification $\{\mathbb{E}^{\Lambda,\omega}\}_{\Lambda \subset\subset \mathbb{Z}^d,\omega\in \Omega}$ as in (\ref{locspec})  is such that   (H0),  (H1) and (H2) hold.  Then, for sufficiently small $J_0$, the corresponding infinite dimensional Gibbs measure $\mu$ is unique and there exists an $R>0$ such that  for  every Borel set $A\subset \Omega$ with $\mu (A)\geq\frac{1}{2}$ \begin{align}\label{lastrel}1-\mu\left( A+\left\{ x:\sum_{i\in N}H_\Phi^*(x_i)<r\right\}  \right)\leq e^{-\hat Kr}  \text{,  \   for every\ } r>R
\end{align}for any $N\subset \subset \mathbb{Z}^d$ and some $\hat K>0$.\end{theorem}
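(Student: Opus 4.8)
The plan is to reduce the enlargement inequality (\ref{lastrel}) to a uniform bound on the exponential moments of the Gibbs measure, and then to close the argument by Legendre duality exactly as in the product case of Theorem \ref{Theorem[B-R2]}. The observation is that (\ref{lastrel}) is equivalent, up to adjusting constants, to a one-sided concentration estimate for the family of test functions that are Lipschitz with respect to the gauge dual to $x\mapsto\sum_{i\in N}H_\Phi^*(x_i)$; by Chernoff's inequality such an estimate follows from a control of the form $\mu(e^{\lambda F})\leq e^{\lambda\mu F+\psi(\lambda)}$ with $\psi$ of the appropriate growth, where $F$ ranges over functions satisfying $\sum_i H_\Phi(\nabla_i F)\leq 1$. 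Thus the whole problem reduces to producing such a Laplace transform bound for the (non-product) measure $\mu$, uniformly in the finite set $N$.

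First I would record the single-site input. By hypothesis (H1) every $\mathbb{E}^{i,\omega}$ satisfies $MLS(H_\Phi)$ with a constant $c$ independent of $\omega$, and hypothesis (H2) is what guarantees the structural assumptions of Theorem \ref{Theorem[B-R2]} in one dimension (monotonicity of $H_\Phi(x)/x^2$ together with the requisite convexity of $H_\Phi^*$). Running the Herbst argument on $MLS(H_\Phi)$, that is, inserting $f=e^{\lambda g/2}$ so that $\nabla_i f/f=\tfrac{\lambda}{2}\nabla_i g$, yields a differential inequality for $\lambda\mapsto\mathbb{E}^{i,\omega}(e^{\lambda g})$ and hence a single-site exponential moment estimate, uniform in $\omega$. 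Here the second part of (H2), namely that $H_\Phi(x)/x^t$ is non-increasing for some $t>2$, plays the technical role of a $\Delta_2$-type condition: it lets me absorb the constant rescalings $H_\Phi(a\,\cdot)$ into $H_\Phi(\cdot)$ that are unavoidable when passing from the gradient of $g$ to the gradient of $\lambda g$.

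Next I would propagate this bound to finite volume. Using the DLR identity $\mu f=\mu\,\mathbb{E}^{\Lambda,\cdot}f$ I integrate out the sites of $\Lambda$ one at a time against the single-site measures, so that the estimate of the previous step applies at each site. The interactions couple the sites, but their influence is controlled by (H0): differentiating $\mathbb{E}^{i,\omega}(g)$ in a boundary coordinate $\omega_j$ with $j\sim i$ produces a factor bounded by $J_0\Vert\nabla_i\nabla_j V\Vert_\infty$ times a variance-type quantity, itself controlled by the spectral gap guaranteed by Remark \ref{remark0}(ii). Summing these boundary influences over the lattice gives, for $J_0$ small enough, a convergent contractive scheme and therefore a finite-volume bound $\mathbb{E}^{\Lambda,\omega}(e^{\lambda F})\leq e^{\lambda\mathbb{E}^{\Lambda,\omega}F+\psi(\lambda)}$ with $\psi$ and the range of admissible $\lambda$ uniform in $\Lambda$ and $\omega$. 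The uniqueness of $\mu$ for small $J_0$, needed to pass to the limit $\Lambda\uparrow\mathbb{Z}^d$, follows from the $SG_2$ consequence of $MLS$ in Remark \ref{remark0}(ii) together with the standard criteria of [D] and [B-HK], in the same spirit as Theorem \ref{Theorem[I-P]}.

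Passing to the limit gives the desired uniform Laplace bound for $\mu$, and converting it by Chernoff's inequality and the definition of $H_\Phi^*$ produces (\ref{lastrel}); the threshold $r>R$ appears because the single-site estimate only controls $\mathbb{E}^{i,\omega}(e^{\lambda g})$ for $\lambda$ in a bounded range, which by duality restricts the enlargement radius to large values. I expect the propagation step to be the main obstacle, and this is exactly the point where the analogy with [I-P] breaks down: $MLS$ does not tensorise or perturb stably enough to survive the interaction term, which is why the full $MLS$ inequality cannot be established for $\mu$. The gain is that a one-sided exponential moment statement is far more robust under a weak interaction than a two-sided functional inequality, so the technical heart of the proof is to quantify the boundary influence of each site and to sum these contributions with a geometric factor valid for small $J_0$.
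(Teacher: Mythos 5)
Your high-level ingredients are the right ones (Herbst's argument on the single-site $MLS(H_\Phi)$, control of the interaction through $J_0\Vert\nabla_i\nabla_jV\Vert_\infty$ and the spectral gap for small $J_0$, and a final duality step built on the gauge function $F_A(x)=\inf_{z\in A}\sum_{i}H^*(x_i-z_i)$ truncated at $r$), but the step you yourself call the technical heart --- propagating the single-site estimate to a uniform Laplace bound $\mathbb{E}^{\Lambda,\omega}(e^{\lambda F})\leq e^{\lambda\mathbb{E}^{\Lambda,\omega}F+\psi(\lambda)}$ and then passing to the limit --- is exactly where the argument as sketched does not close, and it is not the route the paper takes. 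The contraction estimates that control how gradients propagate under conditioning (Lemma \ref{lem252PapIP}, Corollary \ref{cor3.2}, Lemma \ref{lem2.1}) are only exploited after integrating against $\mu$: the reduction to the hypothesis $\sum_{i\in N}H(\nabla_iF)\leq a$ uses the DLR identity $\mu\mathbb{E}^{\Lambda_k}=\mu$ and is not pointwise in $\omega$. Consequently one cannot compose conditional Laplace-transform bounds site by site, and a uniform-in-$(\Lambda,\omega)$ exponential moment estimate of the kind you posit would be a substantially stronger statement than what this machinery yields --- close in spirit to the uniform finite-volume $MLS$ that the paper explicitly says it cannot prove. The paper circumvents this with a different decomposition: it writes $F-\mu F=(F-\mathcal{B}^{s,s}F)+\sum_{k}(\mathcal{B}^{k+s,s}F-\mathcal{B}^{k+s+1,s}F)$ along the shells $\Lambda_k$ (chosen so that no two sites of a shell are adjacent, which makes each $\mathbb{E}^{\Lambda_k,\omega}$ a genuine product of single-site measures and lets $MLS$ tensorise), proves a $\mu$-integrated entropy bound for each increment (Lemma \ref{lem2.2}), converts it into a tail bound for that increment (Lemma \ref{lem2.3}, Corollary \ref{cor2.4}), and sums the tails by a union bound with the splitting $r/2^{k+2}$ (Proposition \ref{prop2.5}).

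Two further points. First, your sketch omits the convergence $\mathcal{B}^{n,s}F\to\mu F$ $\mu$-a.e.\ (Lemma \ref{lem254PapIP}, proved by Borel--Cantelli), which both justifies the telescoping identity and yields the uniqueness of $\mu$; integrating out the sites one at a time against single-site kernels does not reproduce $\mathbb{E}^{\Lambda,\omega}$, so some such convergence statement is unavoidable. Second, your explanation of the threshold $R$ is not the paper's: $R$ does not come from a bounded admissible range of $\lambda$ in the single-site Herbst estimate (the entropy bound is used for all $\lambda\geq0$), but from the need to dominate the summed series of increment tails by a single exponential. This is where the second half of (H2) enters: $H/x^t$ non-increasing for some $t>2$ gives $\omega_H^*(\theta x)\geq\theta^{t^*}\omega_H^*(x)$, which makes the $k$-th tail decay like $2^{-(k+1)}$ once $\frac{ac}{2^{2t^*+1}C_1^{t^*-1}}\omega_H^*\left(\frac{2r}{ac}\right)\geq4\ln2$; since $a=\omega_H(2)r$, this is precisely a condition of the form $r\geq R$.
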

As a consequence of this, the analogue of the (\ref{Talagrand}) Talagrand type inequality follows (see [B-R2]). \begin{cor}\label{cor1}Assume that the local specification $\{\mathbb{E}^{\Lambda,\omega}\}_{\Lambda \subset\subset \mathbb{Z}^d,\omega\in \Omega}$ as in (\ref{locspec}) is  such that   (H0),   (H1) and (H2) hold. Then, for sufficiently small $J_0$, the corresponding infinite dimensional Gibbs measure $\mu$ is unique and there exists an $R>0$ such that  for  every  Borel set $A\subset \Omega$ with $\mu(A)\geq\frac{1}{2}$,   $$\mu \left(A+\sqrt{r}B_2+\frac{1}{\omega_{\Phi^*}^{-1}(\frac{1}{r})}B_{\Phi^*}\right)\geq 1-e^{-Cr}$$ for $r>R$ and $C>0$.
\end{cor}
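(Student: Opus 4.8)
Granting Theorem \ref{theorem1}, the corollary is a consequence of a deterministic inclusion between enlargement sets, in which the Gibbs structure plays no further role; the argument is the one used in [B-R2] to pass from Theorem \ref{Theorem[B-R2]} to the Talagrand-type inequality (\ref{Talagrand}), now carried out with the base estimate (\ref{lastrel}) in place of the product-measure estimate. So the plan is: (i) establish the set inclusion
$$\Big\{x:\textstyle\sum_{i\in N}H_\Phi^*(x_i)<r\Big\}\ \subseteq\ \sqrt{r}\,B_2+\tfrac{1}{\omega_{\Phi^*}^{-1}(1/r)}\,B_{\Phi^*},$$
valid for all $r>R$; and (ii) combine it with the monotonicity of $\mu$ and (\ref{lastrel}).

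For (i) I would exploit the two-regime definition of $H_\Phi$: it is quadratic on $[-1,1]$ and equal to $\Phi(|\cdot|)/\Phi(1)$ outside, so by Legendre duality $H_\Phi^*$ is comparable to a multiple of $y^2$ for small $y$ and to $\Phi^*$ for large $y$. Given a point $x$ with $\sum_{i\in N}H_\Phi^*(x_i)<r$, I would split each coordinate $x_i=u_i+v_i$ according to these two regimes, routing the quadratic part into $u$ and the super-quadratic part into $v$. The quadratic contributions are then controlled by $\sum_{i}u_i^2\le r$, i.e. $u\in\sqrt{r}\,B_2$, while the super-quadratic ones are controlled by $\sum_i\Phi^*\!\big(\omega_{\Phi^*}^{-1}(1/r)\,v_i\big)\le 1$, i.e. $v\in\frac{1}{\omega_{\Phi^*}^{-1}(1/r)}B_{\Phi^*}$. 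The bookkeeping of the scalings is exactly where $\omega_{\Phi^*}$ and its inverse enter, and is done as in [B-R2]; the restriction $r>R$ is what makes the super-quadratic regime dominant and keeps $\omega_{\Phi^*}^{-1}(1/r)$ in the range where the comparison $H_\Phi^*\gtrsim\Phi^*$ is effective.

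With the inclusion in hand, for any Borel $A$ with $\mu(A)\ge\frac12$ I would use that Minkowski summation preserves inclusions,
$$A+\sqrt{r}\,B_2+\tfrac{1}{\omega_{\Phi^*}^{-1}(1/r)}B_{\Phi^*}\ \supseteq\ A+\Big\{x:\textstyle\sum_{i\in N}H_\Phi^*(x_i)<r\Big\},$$
and therefore by monotonicity of $\mu$ and Theorem \ref{theorem1},
$$\mu\Big(A+\sqrt{r}\,B_2+\tfrac{1}{\omega_{\Phi^*}^{-1}(1/r)}B_{\Phi^*}\Big)\ \ge\ \mu\Big(A+\big\{x:\textstyle\sum_{i\in N}H_\Phi^*(x_i)<r\big\}\Big)\ \ge\ 1-e^{-\hat K r}$$
for every $r>R$, which is the claim with $C=\hat K$.

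The main obstacle is step (i): since $H_\Phi$ is convex but not homogeneous, the comparison of $H_\Phi^*$ with a quadratic cost and with $\Phi^*$, and in particular the correct scalings $\sqrt{r}$ and $1/\omega_{\Phi^*}^{-1}(1/r)$, must be extracted from the monotonicity of $H_\Phi$; here hypothesis (H2) ($H_\Phi(x)/x^2$ non-decreasing and $H_\Phi(x)/x^t$ non-increasing for some $t>2$) is what guarantees that $H_\Phi^*$ lies between a quadratic and a $\Phi^*$-type profile and that the coordinatewise split above is quantitatively valid. Once this geometric lemma is secured, the entire probabilistic content is carried by (\ref{lastrel}).
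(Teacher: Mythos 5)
Your proposal is correct and follows essentially the same route as the paper, which gives no independent argument for this corollary but simply invokes the derivation in [B-R2] of the Talagrand-type inequality (\ref{Talagrand}) from the concentration estimate, i.e. exactly the set inclusion $\{x:\sum_{i\in N}H_\Phi^*(x_i)<r\}\subseteq \sqrt{r}B_2+\frac{1}{\omega_{\Phi^*}^{-1}(1/r)}B_{\Phi^*}$ combined with (\ref{lastrel}) and monotonicity. The only slight imprecision is that the restriction $r>R$ comes from Theorem \ref{theorem1} rather than from the geometric inclusion, which in [B-R2] holds for all $r>0$.
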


We will  frequently write $H:=H_\Phi$. Define the  sets $\Lambda_0=\{0\}$ and $\Lambda_k=\sim \Lambda_{k-1}=\{j\in \mathbb{Z}^d\text{\ s.t. \ }j\sim i \ \text{for   }i\in \Lambda_{k-i}\}$. The cardinality of the sets is $\vert \Lambda_k \vert\leq (2d)^k$. We also define 
\begin{align*}
&\Gamma_0=\cup_{k\text{\ even}}\Lambda_k=\cup\{j \in \mathbb{Z}^d : dist(j,(0,0))=2m \text{\; for some \;}m\in\mathbb{N} \cup \{0\}\}, \\ 
&\Gamma_1=\cup_{k\text{\ odd}}\Lambda_k=\mathbb{Z}^d \smallsetminus\Gamma_0  \end{align*}
Note that $dist(i,j)>1$ for all $i,j \in\Lambda_k,k\in \mathbb{N}$ and $\Gamma_0\cap\Gamma_1=\emptyset$.  Moreover $\mathbb{Z}^d=\Gamma_0\cup\Gamma_1$.  For the sake of notation, we will write $\mathbb{E}^{\Lambda_k}=\mathbb{E}^{\Lambda_k,\omega}$ for $k\in \mathbb{N}$. We will  define
$$ \mathcal{B}^{n,s}=\mathbb{E}^{\Lambda_n,\omega}\mathbb{E}^{\Lambda_{n-1},\omega}...\mathbb{E}^{\Lambda_{s+1},\omega}\mathbb{E}^{\Lambda_{s},\omega}$$
for $0\leq s\leq n$.   
\begin{Lemma}\label{lem2.1} Assume that the local specification $\{\mathbb{E}^{\Lambda,\omega}\}_{\Lambda \subset\subset \mathbb{Z}^d,\omega\in \Omega}$ as in (\ref{locspec}) is  such that   (H0), (H1) and  (H2) hold. Then, for sufficiently small $J_0$, for any measurable function $G$ and $f:\mathbb{R}^N\rightarrow \mathbb{R}$,  for $N \subset \subset \mathbb{Z}^d$, we have
\begin{equation*}\mu \left(H(  \nabla_{\Lambda_{k+1}}(\mathcal{B}^{k,s}\ f)
)G\right)\leq C_2^{k-s}C_1\mu\left(\sum_{i\in N} H( \nabla_{i} f )\left(\mathbb{E}^{\Lambda_{s}}...\mathbb{E}^{\Lambda_{k}}G\right)\right)\end{equation*}
   for any   $k\geq s\in \mathbb{N}$  such that $N\subset \Lambda_{s-1}\cup\Lambda_{s}$ and  constants $C_1>0$ and $0<C_2<1$. \end{Lemma}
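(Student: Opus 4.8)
The plan is to prove the estimate by induction on the number of shells $k-s$, reducing everything to a single one-shell step that is then iterated. Two structural facts make the reduction work. First, since $dist(i,i')>1$ for all $i,i'\in\Lambda_m$, no two sites of a single shell are neighbours, so the Hamiltonian $H^{\Lambda_m,\omega}$ splits as $\sum_{i\in\Lambda_m}[\phi(x_i)+\sum_{l\sim i}J_{il}V(x_i,\omega_l)]$ and the measure factorises, $\mathbb{E}^{\Lambda_m,\omega}=\bigotimes_{i\in\Lambda_m}\mathbb{E}^{i,\omega}$; by (H1) and the tensorisation property in Remark \ref{remark0}(i) it satisfies $MLS(H_\Phi)$ with the same constant $c$, and hence by Remark \ref{remark0}(ii) a spectral gap $SG_2$ with constant $c_0\le c/2$, all uniformly in $\omega$. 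Second, by the DLR equation each $\mathbb{E}^{\Lambda_m}$ is the $\mu$-conditional expectation given $\mathcal{F}_{\Lambda_m^c}$, hence self-adjoint in $L^2(\mu)$; this lets me transfer the conditional expectations produced at each step onto $G$, accounting for the reversed product $\mathbb{E}^{\Lambda_s}\cdots\mathbb{E}^{\Lambda_k}$ on the right-hand side. Throughout I take $G\ge0$, as in the intended application.

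The core is the following one-shell bound: writing $g=\mathcal{B}^{m-1,s}f$ and noting $\mathcal{B}^{m,s}f=\mathbb{E}^{\Lambda_m}g$, I claim that for $j\in\Lambda_{m+1}$
\[
H\!\left(\nabla_j\,\mathbb{E}^{\Lambda_m}g\right)\le C_2\,\mathbb{E}^{\Lambda_m}\Big[\textstyle\sum_{i\in\Lambda_m,\,i\sim j}H(\nabla_i g)\Big],
\]
with $0<C_2<1$ for $J_0$ small. To establish it, I first observe that the sweep $\mathcal{B}^{m-1,s}f$ depends on variables only up to the shell $\Lambda_m$ (the outer boundary of the last integrated shell $\Lambda_{m-1}$), so $\nabla_j g=0$ for $j\in\Lambda_{m+1}$. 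Differentiating the Gibbs expectation then leaves only the coupling term,
\[
\nabla_j\mathbb{E}^{\Lambda_m}g=-\mathrm{Cov}_{\Lambda_m}\!\big(g,\nabla_j H^{\Lambda_m}\big)=-\sum_{i\in\Lambda_m,\,i\sim j}J_{ij}\,\mathrm{Cov}_{\Lambda_m}\!\big(g,\nabla_j V(x_i,x_j)\big),
\]
which already carries the factor $J_{ij}$, $|J_{ij}|\le J_0$. Because $\nabla_j V(x_i,x_j)$ depends on the $\Lambda_m$-variables only through $x_i$, the product structure gives $\mathrm{Cov}_{\Lambda_m}(g,\nabla_jV)=\mathbb{E}^{\Lambda_m\setminus i}\,\mathrm{Cov}^{i}(g,\nabla_jV)$, and Cauchy--Schwarz together with $SG_2$ for $\mathbb{E}^{i,\omega}$ and the bound $\|\nabla_i\nabla_jV\|_\infty<\infty$ from (H0) yield $|\mathrm{Cov}^i(g,\nabla_jV)|\le c_0\|\nabla_i\nabla_jV\|_\infty\,(\mathbb{E}^{i}|\nabla_i g|^2)^{1/2}$. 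Collecting the finitely many (at most $2d$) neighbours gives a pointwise bound $|\nabla_j\mathbb{E}^{\Lambda_m}g|\le \kappa J_0\sum_{i\sim j}(\mathbb{E}^{\Lambda_m}|\nabla_i g|^2)^{1/2}$ for a constant $\kappa$ depending only on $d,c_0,V$.

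It remains to pass from this to the $H$-weighted form, and this is the step where (H2) is essential and where I expect the main difficulty to lie. The scaling inequalities $\lambda^2H(x)\le H(\lambda x)\le\lambda^tH(x)$ for $\lambda\ge1$ (and reversed for $\lambda\le1$), which follow from the monotonicity of $H/x^2$ and of $H/x^t$, serve two purposes: the $\lambda\le1$ branch extracts a genuinely small factor $(\kappa J_0)^2$ once $J_0$ is small enough that $\kappa J_0$ (times the neighbour count) drops below $1$, producing the contraction $C_2<1$; the $\lambda\ge1$ branch absorbs the neighbour-count factors through the convexity of $H$ and Jensen's inequality. The delicate point is converting $H\big((\mathbb{E}^{\Lambda_m}|\nabla_i g|^2)^{1/2}\big)$ into $\mathbb{E}^{\Lambda_m}H(\nabla_i g)$; this rests on the convexity of $u\mapsto H(\sqrt u)$, a consequence of (H2), after which Jensen gives the desired inequality. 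Reconciling the quadratic spectral-gap estimate with the $H$-functional while keeping the per-shell constant strictly below $1$, uniformly in the boundary condition, is the technical heart of the lemma.

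Finally, I iterate. Inserting the one-shell bound into $\int H(\nabla_{\Lambda_{m+1}}\mathcal{B}^{m,s}f)\,W\,d\mu$ with $W=\mathbb{E}^{\Lambda_{m+1}}\cdots\mathbb{E}^{\Lambda_k}G\ge0$, summing over $j\in\Lambda_{m+1}$ and using the $L^2(\mu)$-self-adjointness of $\mathbb{E}^{\Lambda_m}$ to move it onto $W$, reduces the exponent $m$ to $m-1$ at the cost of one factor $\le C_2$ and one extra conditional expectation on $G$. After the $k-s+1$ steps down to $\mathbb{E}^{\Lambda_s}f$, where $N\subset\Lambda_{s-1}\cup\Lambda_s$ forces the surviving gradients to be $\nabla_i f$ with $i\in\Lambda_s\cap N\subset N$, one collects the full reversed product $\mathbb{E}^{\Lambda_s}\cdots\mathbb{E}^{\Lambda_k}G$ on the right, the geometric factor $C_2^{k-s+1}\le C_2^{k-s}$, and the remaining constants (degree bounds, $c_0$, $\|\nabla_i\nabla_j V\|_\infty$) into a single $C_1$, which gives exactly the stated inequality.
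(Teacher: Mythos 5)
Your overall route is the same as the paper's: the paper also differentiates the single--shell conditional expectation to produce a covariance against $\nabla_i V$, controls that covariance by the Spectral Gap inequality inherited from (H1) via tensorisation (Remark \ref{remark0}), extracts the contraction factor from $J_0$, converts the resulting quadratic estimate into an $H$--weighted one using (H2), and iterates with the DLR property to accumulate $\mathbb{E}^{\Lambda_s}\cdots\mathbb{E}^{\Lambda_k}G$ on the right (this is exactly the content of Lemma \ref{lem252PapIP} and Corollary \ref{cor3.2}, applied $k-s$ times). Your observation that $\nabla_j\mathcal{B}^{m-1,s}f=0$ for $j\in\Lambda_{m+1}$ is correct and lets you drop the direct gradient term that the paper keeps (its $D$--term); since $H\geq 0$ this only strengthens the bound and is compatible with the stated $\sum_{i\in N}$.

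The genuine problem is in the step you yourself flag as the technical heart: you justify $H\bigl((\mathbb{E}^{i}|\nabla_i g|^2)^{1/2}\bigr)\leq \mathbb{E}^{i}H(\nabla_i g)$ by asserting that $u\mapsto H(\sqrt{u})$ is convex ``as a consequence of (H2)''. That implication is false. Convexity of $H(\sqrt{u})$ is equivalent to $H'(x)/x$ being non-decreasing, which does not follow from $H$ convex together with $H(x)/x^2$ non-decreasing and $H(x)/x^t$ non-increasing. For instance $H(x)=x^2$ on $[0,1]$ and $H(x)=2x^2-1/x$ on $[1,\infty)$ is the modification of an admissible nice Young function, satisfies (H2) with $t=5$, yet $H(\sqrt{u})=2u-u^{-1/2}$ is strictly concave on $[1,\infty)$. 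So the Jensen step, as you set it up, does not go through, and since plain convexity of $H$ gives $H(\mathbb{E}|Y|)\leq\mathbb{E}H(Y)$ with $\mathbb{E}|Y|\leq(\mathbb{E}Y^2)^{1/2}$, monotonicity of $H$ pushes in the wrong direction. The inequality you need is nevertheless true under (H2) alone, but by a different mechanism: from $\mathbb{E}Z^2=m^2$ one has $\mathbb{E}[Z^2\mathbf{1}_{\{Z^2\geq m^2/2\}}]\geq m^2/2$, and since $H(x)/x^2$ is non-decreasing, $\mathbb{E}H(Z)\geq \frac{H(m/\sqrt{2})}{m^2/2}\cdot\frac{m^2}{2}\geq H(m)/\omega_H(\sqrt{2})$, using the properties of $\omega_H$ from Remark \ref{Young0}. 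With that substitution (and Lemma \ref{Young2} to pull the small factor $\eta$ out of $H$), your argument closes and coincides with the paper's proof via Corollary \ref{cor3.2}.
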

\begin{proof}
Using properties of the Gibbs measure for $k\in\mathbb{N}$ we can write\begin{align*}\mu\left( H( \nabla_{\Lambda_{k+1}}\mathbb{E}^{\Lambda_{k}}f)G\right) &=\mu\left( H( \nabla_{\Lambda_{k+1}}\mathbb{E}^{\Lambda_{k}}f)\mathbb{E}^{\Lambda_{k}}G\right) \end{align*}   In order to bound the last quantity we will use the following corollary.
\begin{cor} \label{cor3.2}
  Assume that the local specification  $\{\mathbb{E}^{\Lambda,\omega}\}_{\Lambda\subset\subset \mathbb{Z}^d,\omega \in\Omega}$ satisfies (H0), (H2) and   (H1)/(H3). Then, for sufficiently small $J_0$, there exist constants $D>0$ and $0<\eta<1$ such that the following bound holds
\begin{align*}H( \nabla_{\Lambda_{k+1}}\mathbb{E}^{\Lambda_{k}}f) \leq D\sum_{i\in \Lambda_{k+1}}\mathbb{E}^{\{\sim i\}\cap \Lambda_k}H( \nabla_{i} f )+\eta\sum_{i\in \Lambda_{k}}\sum_{j\sim i}\mathbb{E}^{\{\sim j\}\cap \Lambda_k}H( \nabla_{i} f
) \end{align*} for all functions $f\in C^\infty$.
\end{cor} 
The proof of this corollary will  be presented in section \ref{convergence}.
This will give 
\begin{align}\label{revision1eq2.2new}\nonumber\mu\left( H( \nabla_{\Lambda_{k+1}}\mathbb{E}^{\Lambda_{k}}f)G\right)   \nonumber \leq & D\sum_{i\in \Lambda_{k+1}}\mu( \mathbb{E}^{\{\sim i\}\cap \Lambda_k}H( \nabla_{i} f )\mathbb{E}^{\Lambda_{k}}G)\\  \nonumber&+\eta\sum_{i\in \Lambda_{k}}\sum_{j\sim i}\mu( \mathbb{E}^{\{\sim j\}\cap \Lambda_k}H( \nabla_{i} f
)\mathbb{E}^{\Lambda_{k}}G) \\    \leq& D\sum_{i\in \Lambda_{k+1}}\mu\left( H( \nabla_{i} f )\mathbb{E}^{\Lambda_{k}}G\right)+2d\eta\sum_{i\in \Lambda_{k}}\mu\left( H( \nabla_{i} f
)\mathbb{E}^{\Lambda_{k}}G\right)\end{align}
 If we apply (\ref{revision1eq2.2new}) $k-s$  times  we will obtain \begin{align*}\mu \left(H (\nabla_{\Lambda_{k+1}}\mathcal{ B}^{k,s}f)G\right) \leq \nonumber &(2d \eta)^{k-s-1}D\sum_{i\in \Lambda_{s-1}}\mu\left( H( \nabla_{i} f )\left(\mathbb{E}^{\Lambda_{s}}...\mathbb{E}^{\Lambda_{k}}G\right)\right)\\&+(2d\eta)^{k-s}\sum_{i\in \Lambda_{s}}
\mu\left( H( \nabla_{i} f )\left(\mathbb{E}^{\Lambda_{s}}...\mathbb{E}^{\Lambda_{k}}G\right)\right)
\\ =  &C_2^{k-s}C_1\mu\left(\sum_{i\in N } H( \nabla_{i} f )\left(\mathbb{E}^{\Lambda_{s}}...\mathbb{E}^{\Lambda_{k}}G\right)\right)\end{align*}
 for constants $C_1=D+2d\eta$ and $C_2=(2d\eta)^\frac{k-s-1}{k-s}<1$ for sufficiently small $J_0$.
\end{proof} For a convex function $H:[0,+\infty)\rightarrow\mathbb{R}^+$ define
$$\omega_H(x)=\sup_{t>0}\frac{H(tx)}{H(t)} ,\  x>0$$
The following remark presents some useful properties for $\omega_H$.
\begin{rem}\label{Young0}It can be shown that $\omega_H\geq \frac{H}{H(1)}$ and $\omega_H(0)=0$ as well as that $\omega_H$ is convex and satisfies $\omega_H(ab)\leq\omega_H(a)\omega_H(b)$ for $a,b\geq0$. Furthermore, if $\frac{H}{x^r}$ is non decreasing for $x>0$ and $r>1$ then so is the function $\frac{\omega_H}{x^r}$. \end{rem}
\begin{Lemma}\label{lem2.2}Assume that the local specification $\{\mathbb{E}^{\Lambda,\omega}\}_{\Lambda \subset\subset \mathbb{Z}^d,\omega\in \Omega}$  is  such that   (H0), (H1)  and (H2) hold.
Then,  for sufficiently small $J_0$, for every $F:\mathbb{R}^N\rightarrow\mathbb{R}$,  for $N \subset \subset \mathbb{Z}^d$, such that   $\sum_{i\in N}H( \nabla_{i} F )\leq a$ $\mu-$a.e, there exist constants $C_1>0$ and $0<C_2<1$ such that     \begin{align*}\mu&\left( \frac{1}{\mathbb{E}^{\Lambda_{k+1}}e^{\lambda \mathcal{B}^{k,s}F}}Ent_{\mathbb{E}^{\Lambda_{k+1}}} (e^{\lambda \mathcal{B}^{k,s}F})\right)\leq ac\omega_H(\frac{\lambda}{2})C_{1} C_2^{k-s}  \end{align*}
for any   $k\geq s\in \mathbb{N}$  such that $N\subset \Lambda_{s-1}\cup\Lambda_{s}$ and  $\lambda \geq 0$.  \end{Lemma}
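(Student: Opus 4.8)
The plan is to reduce the statement to Lemma \ref{lem2.1} by first exploiting that the block measure $\mathbb{E}^{\Lambda_{k+1}}$ is a product, and then choosing the test function $G$ in Lemma \ref{lem2.1} to be the normalised tilted density $e^{\lambda g}/Z$, where $g:=\mathcal{B}^{k,s}F$.

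\textbf{Step 1 (factorisation and tensorised MLS).} Since $dist(i,j)>1$ for all $i,j\in\Lambda_{k+1}$, no two sites of $\Lambda_{k+1}$ are neighbours, so for every boundary condition $\omega$ the Hamiltonian $H^{\Lambda_{k+1},\omega}$ splits into a sum of single-site terms and therefore $\mathbb{E}^{\Lambda_{k+1},\omega}=\bigotimes_{i\in\Lambda_{k+1}}\mathbb{E}^{i,\omega}$. By (H1) each factor satisfies $MLS(H_\Phi)$ with the same constant $c$ uniformly in $\omega$, so by tensorisation (Remark \ref{remark0}(i)) the block measure $\mathbb{E}^{\Lambda_{k+1}}$ satisfies $MLS(H_\Phi)$ with constant $c$.

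\textbf{Step 2 (entropy bound via MLS and $\omega_H$).} Apply the block $MLS(H_\Phi)$ to $f=e^{\lambda g/2}$ (which is smooth, the block kernels being smooth), so that $\nabla_i f/f=\frac{\lambda}{2}\nabla_i g$. This gives $Ent_{\mathbb{E}^{\Lambda_{k+1}}}(e^{\lambda g})\leq c\,\mathbb{E}^{\Lambda_{k+1}}\big(\sum_{i\in\Lambda_{k+1}}H(\frac{\lambda}{2}\nabla_i g)\,e^{\lambda g}\big)$. Using $H(\frac{\lambda}{2}t)\leq\omega_H(\frac{\lambda}{2})H(t)$, which is immediate from $\omega_H(x)=\sup_{t}H(tx)/H(t)$ and the evenness of $H$, the energy term is at most $\omega_H(\frac{\lambda}{2})H(\nabla_{\Lambda_{k+1}}g)$. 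Dividing by $Z:=\mathbb{E}^{\Lambda_{k+1}}e^{\lambda g}$ (which is $\mathbb{E}^{\Lambda_{k+1}}$-invariant, depending only on the boundary of $\Lambda_{k+1}$) and integrating against $\mu$ yields
\begin{equation*}
\mu\left( \frac{1}{Z} Ent_{\mathbb{E}^{\Lambda_{k+1}}}(e^{\lambda g}) \right)\leq c\,\omega_H\left(\frac{\lambda}{2}\right)\mu\left( \mathbb{E}^{\Lambda_{k+1}}\left( H(\nabla_{\Lambda_{k+1}}g)\,G \right) \right),
\end{equation*}
with $G:=e^{\lambda g}/Z$.

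\textbf{Step 3 (DLR and Lemma \ref{lem2.1}).} By the DLR equation the outer $\mathbb{E}^{\Lambda_{k+1}}$ is absorbed into $\mu$, so the right-hand side equals $c\,\omega_H(\frac{\lambda}{2})\mu(H(\nabla_{\Lambda_{k+1}}(\mathcal{B}^{k,s}F))\,G)$. Applying Lemma \ref{lem2.1} with $f=F$ and this $G$ bounds it by $c\,\omega_H(\frac{\lambda}{2})C_1C_2^{k-s}\mu(\sum_{i\in N}H(\nabla_i F)\,\mathbb{E}^{\Lambda_s}\cdots\mathbb{E}^{\Lambda_k}G)$. The hypothesis $\sum_{i\in N}H(\nabla_i F)\leq a$ $\mu$-a.e.\ together with $\mathbb{E}^{\Lambda_s}\cdots\mathbb{E}^{\Lambda_k}G\geq0$ upgrades this to $ac\,\omega_H(\frac{\lambda}{2})C_1C_2^{k-s}\mu(\mathbb{E}^{\Lambda_s}\cdots\mathbb{E}^{\Lambda_k}G)$, and $k-s+1$ further applications of DLR give $\mu(\mathbb{E}^{\Lambda_s}\cdots\mathbb{E}^{\Lambda_k}G)=\mu(G)=\mu(\mathbb{E}^{\Lambda_{k+1}}G)=1$, because $\mathbb{E}^{\Lambda_{k+1}}G=1$ by the choice of $Z$. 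This is exactly the asserted bound.

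The main obstacle is the bookkeeping in the choice of $G$: it must be taken as the normalised tilted density $e^{\lambda g}/Z$ so that simultaneously the MLS energy term is presented in the form $\mu(H(\nabla_{\Lambda_{k+1}}g)G)$ demanded as the input of Lemma \ref{lem2.1}, and the residual mass $\mu(\mathbb{E}^{\Lambda_s}\cdots\mathbb{E}^{\Lambda_k}G)$ collapses to $1$ via $\mathbb{E}^{\Lambda_{k+1}}G=1$. Everything else reduces to the product structure of $\mathbb{E}^{\Lambda_{k+1}}$, the DLR invariance $\mu\mathbb{E}^{\Lambda}=\mu$, and the elementary scaling inequality for $\omega_H$.
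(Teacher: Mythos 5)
Your proposal is correct and follows essentially the same route as the paper: tensorise the single-site $MLS(H_\Phi)$ over the product block $\mathbb{E}^{\Lambda_{k+1}}$, plug in the exponential tilt of $\mathcal{B}^{k,s}F$, extract $\omega_H(\frac{\lambda}{2})$ by the scaling property of $\omega_H$, and feed the normalised tilted density as $G$ into Lemma \ref{lem2.1} before collapsing the remaining mass to $1$ via the DLR equation. The only cosmetic difference is that the paper normalises the test function before applying the MLS inequality rather than dividing by $Z$ afterwards, which is equivalent.
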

\begin{proof}  Since interactions occur only between nearest neighbours on the lattice,  the measure $\mathbb{E}^{\Lambda_{k+1},\omega}$ is the product measure of the single site measures i.e.  $\mathbb{E}^{\Lambda_{k+1},\omega}=\otimes _{j \in \Lambda_{k+1}}\mathbb{E}^{j,\omega}$.
Moreover, by (H1), all measures $\mathbb{E}^{j,\omega}, j \in\Lambda_{k+1}$ satisfy the $(MLS(H_\Phi))$ inequality with a constant $c$ uniformly on the
boundary conditions.   Since the $(MLS(H_\Phi))$ inequality is stable under tensorisation (see Remark  \ref{remark0}),  the product measure $\mathbb{E}^{\Lambda_{k+1},\omega}$ also satisfies the $(MLS(H_\Phi))$ inequality with the same constant $c$, therefore we have that   
$$\mathbb{E}^{\Lambda_{k+1}} \left\vert f\right\vert^2 log\frac{\left\vert f\right\vert^2}{\mathbb{E}^{\Lambda_{k+1}} \left\vert f\right\vert^2}\leq c\ \int \sum _{j\in\Lambda_{k+1}}H  \left( \frac{ \nabla_{j} f }{f}\right)f^2d\mathbb{E}^{\Lambda_{k+1}} $$
Denote $h=\mathcal{B}^{k,s}F$. If we plug $f:=\frac{e^{\frac{\lambda}{2}h}}{(\mathbb{E}^{\Lambda_{k+1}}e^{\lambda h})^\frac{1}{2}}$   we obtain
\begin{align*}\frac{Ent_{\mathbb{E}^{\Lambda_{k+1}}} (e^{\lambda h})}{\mathbb{E}^{\Lambda_{k+1}}e^{\lambda h}}&\leq c\ \int \sum _{j\in\Lambda_{k+1}}H  \left( \frac{ \lambda }{2}\nabla_{j} h\right)\frac{e^{\lambda h}}{\mathbb{E}^{\Lambda_{k+1}}e^{\lambda h}}d\mathbb{E}^{\Lambda_{k+1}} \\ &\leq c\omega_H(\frac{\lambda}{2}) \int \sum _{j\in\Lambda_{k+1}}H  \left( \nabla_{j} h\right)\frac{e^{\lambda h}}{\mathbb{E}^{\Lambda_{k+1}}e^{\lambda h}}d\mathbb{E}^{\Lambda_{k+1}}
\end{align*}
  If we apply the Gibbs measure in the last inequality we  get \begin{align*}\mu&\left( \frac{1}{\mathbb{E}^{\Lambda_{k+1}}e^{\lambda h}}Ent_{\mathbb{E}^{\Lambda_{k+1}}} (e^{\lambda h})\right)\\  &  \   \    \   \    \    \    \   \   \   \   \    \   \leq c\omega_H(\frac{\lambda}{2}) \int \sum _{j\in\Lambda_{k+1}}H  \left( \nabla_{j} (\mathcal{B}^{k,s}F)\right)\frac{e^{\lambda h}}{\mathbb{E}^{\Lambda_{k+1}}e^{\lambda h}}d\mu
\end{align*}
In order to calculate the right hand side we can use  Lemma \ref{lem2.1}. This leads to    \begin{align*}\mu&\left( \frac{1}{\mathbb{E}^{\Lambda_{k+1}}e^{\lambda \mathcal{B}^{k,s}F}}Ent_{\mathbb{E}^{\Lambda_{k+1}}} (e^{\lambda \mathcal{B}^{k,s}F})\right)\leq \\ & c\omega_H(\frac{\lambda}{2}) C_2^{k-s}C_1 \int \sum_{i\in N} H( \nabla_{i} F)\left(\mathbb{E}^{\Lambda_{s}}...\mathbb{E}^{\Lambda_{k}}\left(\frac{e^{\lambda  \mathcal{B}^{k,s}F}}{\mathbb{E}^{\Lambda_{k+1}}e^{\lambda \mathcal{B}^{k,s}F}}\right)\right)d\mu
\end{align*}Since  $\sum_{i\in N}H( \nabla_{i} F )\leq a$ $\mu-$a.e. we obtain  \begin{align*}\mu&\left( \frac{1}{\mathbb{E}^{\Lambda_{k+1}}e^{\lambda \mathcal{B}^{k,s}F}}Ent_{\mathbb{E}^{\Lambda_{k+1}}} (e^{\lambda \mathcal{B}^{k,s}F})\right)\\ &  \   \    \   \    \    \    \   \   \   \   \    \   \leq  ac\omega_H(\frac{\lambda}{2}) C_2^{k-s}C_1 \int \mathbb{E}^{\Lambda_{s}}...\mathbb{E}^{\Lambda_{k}}\left(\frac{e^{\lambda f}}{\mathbb{E}^{\Lambda_{k+1}}e^{\lambda f}}\right) d\mu \\ &  \   \    \   \    \    \    \   \   \   \   \    \   =ac\omega_H(\frac{\lambda}{2}) C_2^{k-s}C_1 \int \frac{e^{\lambda f}}{\mathbb{E}^{\Lambda_{k+1}}e^{\lambda f}}d\mu\\ &  \   \    \   \    \    \    \   \   \   \   \    \   =ac\omega_H(\frac{\lambda}{2}) C_2^{k-s}C_1 \int \mathbb{E}^{\Lambda_{k+1}}\frac{e^{\lambda f}}{\mathbb{E}^{\Lambda_{k+1}}e^{\lambda f}}d\mu\\ &  \   \    \   \    \    \    \   \   \   \   \    \   =ac\omega_H(\frac{\lambda}{2})C_{1} C_2^{k-s}
\end{align*}
where above we used successively the definition of the  Gibbs measure. 
  \end{proof} 
\begin{Lemma}\label{lem2.3}
For every function  $f$ for which the inequality
\begin{align}\label{eq1lem2.3}\mu\left( \frac{1}{\mathbb{E}^{\Lambda_{k}}e^{\lambda f}}Ent_{\mathbb{E}^{\Lambda_{k}}} (e^{\lambda f})\right)\leq K_f\omega_H(\frac{\lambda}{2})
\end{align} holds for  constants $K_f$ depending on $f$ we obtain that \begin{align*}\mu\left( \mathbb{E}^{\Lambda_k}\left( \{f-\mathbb{E}^{\Lambda_{k}}f\geq r\} \right)\right)\leq e^{-K_{f}\omega^*_H(\frac{2r}{K_{f}})}
\end{align*}
for any $r\geq0$ and  $\lambda \geq 0$.
\end{Lemma}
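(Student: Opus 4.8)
The plan is to run a Herbst-type argument: control an exponential (Laplace) transform of $f-\mathbb{E}^{\Lambda_k}f$ and then convert it into the tail estimate by exponential Chebyshev, choosing the free parameter $\lambda$ optimally at the very end so that the exponent becomes the Legendre dual $\omega_H^*$. Write $g=f-\mathbb{E}^{\Lambda_k}f$ and note two preliminary reductions. First, the normalized entropy in the hypothesis is invariant under adding a constant to the exponent, so $\frac{Ent_{\mathbb{E}^{\Lambda_k}}(e^{\lambda f})}{\mathbb{E}^{\Lambda_k}e^{\lambda f}}=\frac{Ent_{\mathbb{E}^{\Lambda_k}}(e^{\lambda g})}{\mathbb{E}^{\Lambda_k}e^{\lambda g}}$, and (\ref{eq1lem2.3}) bounds the $\mu$-mean of this quantity; $\mathbb{E}^{\Lambda_k}f$ is constant under the inner integration, so the centering is harmless. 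Second, by the DLR property $\mu(\mathbb{E}^{\Lambda_k}\{g\ge r\})=\mu\{g\ge r\}$, so it is enough to estimate this $\mu$-probability.

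The core is the conditional Herbst identity. Fix $\omega$; since interactions are nearest-neighbour and $\mathrm{dist}(i,j)>1$ on $\Lambda_k$, the measure $\mathbb{E}^{\Lambda_k,\omega}$ is a product and $\mathbb{E}^{\Lambda_k,\omega}g=0$. Setting $K_\omega(\lambda)=\frac1\lambda\log\mathbb{E}^{\Lambda_k,\omega}e^{\lambda g}$, a direct differentiation gives $\lambda^2 K_\omega'(\lambda)=\frac{Ent_{\mathbb{E}^{\Lambda_k,\omega}}(e^{\lambda g})}{\mathbb{E}^{\Lambda_k,\omega}e^{\lambda g}}$ with $K_\omega(0^+)=\mathbb{E}^{\Lambda_k,\omega}g=0$, whence $\log\mathbb{E}^{\Lambda_k,\omega}e^{\lambda g}=\lambda\int_0^\lambda s^{-2}\frac{Ent_{\mathbb{E}^{\Lambda_k,\omega}}(e^{sg})}{\mathbb{E}^{\Lambda_k,\omega}e^{sg}}\,ds$. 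Integrating over $\mu$ (Tonelli, the integrand being nonnegative) and inserting (\ref{eq1lem2.3}) yields $\mu(\log\mathbb{E}^{\Lambda_k}e^{\lambda g})\le \lambda\int_0^\lambda s^{-2}K_f\omega_H(s/2)\,ds$. By (H2) and Remark \ref{Young0}, $x\mapsto\omega_H(x)/x^2$ is non-decreasing, so $\omega_H(s/2)/s^2\le\omega_H(\lambda/2)/\lambda^2$ for $s\le\lambda$, and the integral is at most $\omega_H(\lambda/2)/\lambda$. This produces the clean Laplace bound $\mu(\log\mathbb{E}^{\Lambda_k}e^{\lambda g})\le K_f\omega_H(\lambda/2)$ for every $\lambda\ge0$.

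It remains to pass to the tail. Conditional Chebyshev gives $\mathbb{E}^{\Lambda_k}\{g\ge r\}\le e^{-\lambda r}\mathbb{E}^{\Lambda_k}e^{\lambda g}$ for each $\omega$ and each $\lambda\ge0$; averaging and optimizing over $\lambda$ via the substitution $u=\lambda/2$ turns the exponent $\lambda r-K_f\omega_H(\lambda/2)$ into $K_f(\tfrac{2r}{K_f}u-\omega_H(u))$, whose supremum is exactly the claimed rate $K_f\omega_H^*(2r/K_f)$. The step I expect to be the main obstacle is precisely this last passage: the Herbst integration controls $\mu(\log\mathbb{E}^{\Lambda_k}e^{\lambda g})$, whereas the averaged Chebyshev bound requires $\mu(\mathbb{E}^{\Lambda_k}e^{\lambda g})=\mu(e^{\lambda g})$, and Jensen runs the wrong way ($\log\mu(\cdot)\ge\mu(\log(\cdot))$). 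One must therefore argue at the level of the annealed Laplace transform itself rather than its logarithm, exploiting the normalization $1/\mathbb{E}^{\Lambda_k}e^{\lambda g}$ that (\ref{eq1lem2.3}) already carries so that the weights telescope under the successive conditional expectations, exactly as in the closing computation of Lemma \ref{lem2.2}, where the factors $e^{\lambda f}/\mathbb{E}^{\Lambda_{k+1}}e^{\lambda f}$ collapse to $1$ after integrating against $\mu$. This book-keeping is the delicate point; once it is in place the Legendre optimization above closes the argument, and the residual slack in this reverse-Jensen step is plausibly what forces the clean exponential rate in Theorem \ref{theorem1} to be asserted only for $r$ beyond a threshold $R$.
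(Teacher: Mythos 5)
Your proposal reproduces the paper's own argument essentially step for step: the paper likewise sets $\Psi(\lambda)=\mathbb{E}^{\Lambda_k}e^{\lambda f}$, identifies the normalized entropy with $\lambda^2\frac{d}{d\lambda}\bigl(\lambda^{-1}\log\Psi(\lambda)\bigr)$, integrates from $0$ using $\lim_{\lambda\to0}\lambda^{-1}\log\Psi(\lambda)=\mathbb{E}^{\Lambda_k}f$, invokes $\lambda\int_0^\lambda u^{-2}\omega_H(u/2)\,du\le\omega_H(\lambda/2)$ from the monotonicity of $H(x)/x^2$, and finishes with conditional Chebyshev plus the Legendre optimization yielding $K_f\omega_H^*(2r/K_f)$. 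The one step you honestly leave open --- passing from the Herbst bound on $\mu\bigl(\log\int e^{\lambda(f-\mathbb{E}^{\Lambda_k}f)}\,d\mathbb{E}^{\Lambda_k}\bigr)$ to the bound on $\mu\bigl(\int e^{\lambda(f-\mathbb{E}^{\Lambda_k}f)}\,d\mathbb{E}^{\Lambda_k}\bigr)$ needed for the averaged Chebyshev inequality, where Jensen runs the wrong way --- is carried out in the paper without comment or additional argument, so you have not missed an idea the paper supplies but rather pinpointed the unjustified link in the published proof.
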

\begin{proof}
If we define  $\Psi(\lambda)=\mathbb{E}^{\Lambda_{k}}e^{\lambda f}$ then
we can write $$\mu\left( \frac{1}{\mathbb{E}^{\Lambda_{k}}e^{\lambda f}}Ent_{\mathbb{E}^{\Lambda_{k}}} (e^{\lambda f})\right)=\mu\left(\frac{ \lambda\Psi'(\lambda)}{\Psi(\lambda)}-\log\Psi(\lambda)\right)$$
Hence, due to (\ref{eq1lem2.3})
we have $$\mu\left(\frac{ \lambda\Psi'(\lambda)}{\Psi(\lambda)}-\log\Psi(\lambda)\right)\leq \omega_H(\frac{\lambda}{2})K_f$$ If we divide by $\lambda^2$
$$\mu \left( \frac{d}{d\lambda}\left(\frac{\log \Psi (\lambda)}{\lambda}\right) \right)\leq\frac{\omega_H(\frac{\lambda}{2})K_f}{\lambda^2}$$
Since $\lim_{\lambda\rightarrow 0}\frac{\log\Psi(\lambda)}{\lambda}=\mathbb{E}^{\Lambda_{k}}f$ by integrating we get
$$\mu\left( \int e^{\lambda (f-\mathbb{E}^{\Lambda_{k}}f)}d\mathbb{E}^{\Lambda_{k}} \right)\leq \exp \left\{ K_{f}\lambda \int_0^\lambda\frac{\omega_H(\frac{u}{2})}{u^2}du\right\}$$
By Chebichev inequality for any $r\geq0$ and any $\lambda>0$ we obtain
$$\mathbb{E}^{\Lambda_{k}}\left( \{f-\mathbb{E}^{\Lambda_{k}} f\geq r\}\right)\leq e^{-r\lambda}\int e^{\lambda (f-\mathbb{E}^{\Lambda_{k}}f)}d\mathbb{E}^{\Lambda_{k}}$$
If we combine together the last two inequalities we finally get
$$\mu\left(\mathbb{E}^{\Lambda_{k}}\left( \{f-\mathbb{E}^{\Lambda_{k}} f\geq r\}\right)\right)\leq  \exp \left\{-K_{f}\sup_{\lambda>0}\left[ \frac{r\lambda}{K_{f}}-\lambda\int_0^\lambda\frac{\omega_H(\frac{u}{2})}{u^2}du\right]\right\}$$
Since   $\frac{H(x)}{x^2}$ is non-decreasing on $(0,+\infty)$ we have that (see [B-R2]) $$\lambda\int_0^\lambda\frac{\omega_H(\frac{u}{2})}{u^2}du\leq\omega_H(\frac{\lambda}{2})$$
from which the result follows. 
\end{proof}
From Lemma \ref{lem2.2} and Lemma \ref{lem2.3} the  corollary bellow follows.
\begin{cor}\label{cor2.4} Assume that the local specification $\{\mathbb{E}^{\Lambda,\omega}\}_{\Lambda \subset\subset \mathbb{Z}^d,\omega\in \Omega}$  is  such that   (H0), (H1)  and (H2) hold.
Then,  for sufficiently small $J_0$, we have that for every $F:\mathbb{R}^N\rightarrow\mathbb{R}$,  for $N \subset \subset \mathbb{Z}^d$, such that   $\sum_{i\in N}H( \nabla_{i} F )\leq a$ $\mu-$a.e, there exist constants $C_1>0$ and $0<C_2<1$ such that   \begin{align*}\mu\left( \mathbb{E}^{\Lambda_{k+1}}\left( \{\mathcal{B}^{k,s}F-\mathcal{B}^{k+1,s}F\geq r\} \right)\right)\leq e^{-acC_{1} C_2^{k-s}\omega^*_H(\frac{2r}{acC_{1} C_2^{k-s}})}
\end{align*} 
for every $r\geq0$ and for any   $k\geq s\in \mathbb{N}$  such that $N\subset \Lambda_{s-1}\cup\Lambda_{s}$. \end{cor}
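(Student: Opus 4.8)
The plan is to obtain the corollary by feeding the entropy estimate of Lemma \ref{lem2.2} directly into the Herbst-type deviation bound of Lemma \ref{lem2.3}. The key observation is that, for the single choice $f=\mathcal{B}^{k,s}F$, the conclusion of Lemma \ref{lem2.2} is \emph{exactly} the premise (\ref{eq1lem2.3}) of Lemma \ref{lem2.3}, with the index $k$ there replaced by $k+1$ and the constant identified as $K_f:=acC_1C_2^{k-s}$. So no new analytic input is needed; the work is purely to match the two statements.

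First I would fix $F:\mathbb{R}^N\rightarrow\mathbb{R}$ with $\sum_{i\in N}H(\nabla_i F)\leq a$ $\mu$-a.e.\ and set $f=\mathcal{B}^{k,s}F$. By Lemma \ref{lem2.2}, for sufficiently small $J_0$ there are constants $C_1>0$ and $0<C_2<1$ with
$$\mu\left(\frac{1}{\mathbb{E}^{\Lambda_{k+1}}e^{\lambda f}}Ent_{\mathbb{E}^{\Lambda_{k+1}}}(e^{\lambda f})\right)\leq acC_1C_2^{k-s}\,\omega_H\left(\frac{\lambda}{2}\right)$$
for all $\lambda\geq0$ and all $k\geq s$ with $N\subset\Lambda_{s-1}\cup\Lambda_s$. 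This is precisely hypothesis (\ref{eq1lem2.3}) for the conditional expectation $\mathbb{E}^{\Lambda_{k+1}}$ with $K_f=acC_1C_2^{k-s}$. Although Lemma \ref{lem2.3} is phrased for $\mathbb{E}^{\Lambda_k}$, its proof invokes only the entropy bound and the monotonicity of $H(x)/x^2$ from (H2), so it applies verbatim with the block index shifted to $k+1$.

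Second I would identify the conditional mean that appears in the conclusion of Lemma \ref{lem2.3}. From the definition $\mathcal{B}^{n,s}=\mathbb{E}^{\Lambda_n}\cdots\mathbb{E}^{\Lambda_s}$, applying one further conditional expectation gives $\mathbb{E}^{\Lambda_{k+1}}\mathcal{B}^{k,s}=\mathcal{B}^{k+1,s}$, hence $\mathbb{E}^{\Lambda_{k+1}}f=\mathcal{B}^{k+1,s}F$. Substituting this, together with $K_f=acC_1C_2^{k-s}$, into the conclusion of Lemma \ref{lem2.3} yields
$$\mu\left(\mathbb{E}^{\Lambda_{k+1}}\left(\{\mathcal{B}^{k,s}F-\mathcal{B}^{k+1,s}F\geq r\}\right)\right)\leq e^{-acC_1C_2^{k-s}\,\omega_H^*\left(\frac{2r}{acC_1C_2^{k-s}}\right)},$$
which is exactly the claimed inequality.

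I do not expect a substantive obstacle: this is a bookkeeping corollary that stitches the two lemmas together. The only points requiring care are reading off $K_f=acC_1C_2^{k-s}$ from the right-hand side of Lemma \ref{lem2.2} (with $C_1,C_2$ the constants furnished there for small $J_0$), and the observation that the deviation argument of Lemma \ref{lem2.3} is insensitive to which block index labels the conditional expectation. Everything else is a direct substitution.
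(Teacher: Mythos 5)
Your proposal is correct and is exactly the paper's route: the paper simply states that Corollary \ref{cor2.4} follows from Lemma \ref{lem2.2} and Lemma \ref{lem2.3}, which is precisely the stitching you carry out, including the identification $K_f=acC_1C_2^{k-s}$ and the observation that $\mathbb{E}^{\Lambda_{k+1}}\mathcal{B}^{k,s}F=\mathcal{B}^{k+1,s}F$.
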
Before we continue some useful facts about Orlicz spaces will be presented in the following two lemmas. These are more detailed presented in [B-R2], while for a thorough investigation one can look at [R-R]. 
\begin{Lemma}\label{Young1}Let $a\in (1,+\infty)$ and $\Phi$ be a differentiable, strictly convex nice Young function. Then the following are equivalent
\begin{enumerate}
\item
The function $\frac{\Phi}{x^a}$ is non-decreasing for $x>0$.
\item The function $\frac{\Phi^*}{x^{a^*}}$ is non-increasing for $x>0$.
\end{enumerate}
where $a^*$ is the conjugate of $a$, i.e. $\frac{1}{a}+\frac{1}{a^*}=1$.
\end{Lemma}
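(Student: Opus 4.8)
The plan is to convert each monotonicity statement into an equivalent first–order differential inequality and then pass between the two via the Legendre duality relating $\Phi$ and $\Phi^*$. Since $\Phi$ is even, it suffices to work on $(0,+\infty)$. Differentiating, the ratio $x\mapsto \Phi(x)/x^{a}$ has derivative $x^{-a-1}\left(x\Phi'(x)-a\Phi(x)\right)$, so that condition $(1)$ is equivalent to
\[
x\Phi'(x)\geq a\Phi(x)\qquad\text{for all } x>0,
\]
and in the same way condition $(2)$ is equivalent to
\[
y(\Phi^*)'(y)\leq a^{*}\Phi^*(y)\qquad\text{for all } y>0.
\]

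Next I would record the duality facts that make these two inequalities interchangeable. Because $\Phi$ is differentiable, strictly convex and nice, its derivative $\Phi'$ is a continuous, strictly increasing bijection of $(0,+\infty)$ onto $(0,+\infty)$: indeed $\Phi'(0)=0$, strict convexity yields strict monotonicity, and $\lim_{x\to\infty}\Phi(x)/x=\infty$ together with the convexity bound $\Phi(x)/x\leq\Phi'(x)$ forces $\Phi'(x)\to\infty$. Consequently $\Phi^*$ is differentiable on $(0,+\infty)$ with $(\Phi^*)'=(\Phi')^{-1}$, and for $y=\Phi'(x)$ one has the identities $\Phi^*(y)=x\Phi'(x)-\Phi(x)$ and $(\Phi^*)'(y)=x$.

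I would then substitute $y=\Phi'(x)$, which by the bijectivity just noted sweeps out all of $(0,+\infty)$ as $x$ ranges over $(0,+\infty)$. Under this substitution the inequality $y(\Phi^*)'(y)\leq a^{*}\Phi^*(y)$ becomes $x\Phi'(x)\leq a^{*}\left(x\Phi'(x)-\Phi(x)\right)$, i.e. $a^{*}\Phi(x)\leq(a^{*}-1)\,x\Phi'(x)$. Dividing by $a^{*}-1>0$ and using $\tfrac{1}{a}+\tfrac{1}{a^{*}}=1$, which gives $a^{*}/(a^{*}-1)=a$, this reduces to $a\Phi(x)\leq x\Phi'(x)$, precisely the differential form of condition $(1)$. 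Because every step is an equivalence and the substitution is a bijection, the inequality holding for all $y>0$ is equivalent to its holding for all $x>0$, and the equivalence of $(1)$ and $(2)$ follows.

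I expect the main obstacle to be the careful justification of the duality toolkit rather than the algebra: one must verify that $\Phi^*$ is genuinely differentiable with $(\Phi^*)'=(\Phi')^{-1}$ and, crucially, that $\Phi'$ maps $(0,+\infty)$ \emph{onto} all of $(0,+\infty)$, so that the substitution $y=\Phi'(x)$ does not miss part of the range over which condition $(2)$ is required to hold. The niceness hypotheses $\Phi'(0)=0$ and superlinear growth, together with strict convexity, are exactly what secure this surjectivity, after which the remaining computation is routine.
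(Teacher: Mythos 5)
Your argument is correct. Note, however, that the paper itself does not prove this lemma at all: it states Lemmas \ref{Young1} and \ref{Young2} as known facts about Orlicz spaces and defers entirely to [B-R2] and [R-R]. So there is no in-paper proof to compare against, and your contribution is a self-contained derivation. The route you take --- rewriting each monotonicity condition as the differential inequality $x\Phi'(x)\geq a\Phi(x)$, respectively $y(\Phi^*)'(y)\leq a^*\Phi^*(y)$, and transporting one into the other via the Legendre identities $\Phi^*(\Phi'(x))=x\Phi'(x)-\Phi(x)$ and $(\Phi^*)'=(\Phi')^{-1}$ --- is the standard one, and you correctly identify and discharge the only delicate point: that $\Phi'$ is a strictly increasing continuous bijection of $(0,+\infty)$ onto itself (using $\Phi'(0)=0$, strict convexity, and the superlinearity $\Phi(x)/x\leq\Phi'(x)\to\infty$), so that $\Phi^*$ is genuinely differentiable and the substitution $y=\Phi'(x)$ covers the full range on which condition $(2)$ must hold. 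The algebra $a^*/(a^*-1)=a$ closing the loop is checked correctly, and every step is a genuine equivalence, so both implications follow at once. This is a complete and correct proof of a statement the paper leaves as a citation.
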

\begin{Lemma} \label{Young2} Let $a\in (1,+\infty)$ and $\Phi$ be a differentiable function on $[0,+\infty)$ such that $\frac{\Phi}{x^a}$ is non-decreasing. Then for $x>0$ and $t\in (0,1]$ $$\Phi(tx)\leq t^a\Phi(x)$$
\end{Lemma}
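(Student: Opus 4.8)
The plan is to derive the inequality directly from the stated monotonicity of $x\mapsto \Phi(x)/x^a$, without invoking differentiability at all. The key observation is that for $t\in(0,1]$ and $x>0$ the point $tx$ satisfies $0<tx\leq x$, so it lies to the left of (or at) $x$ on the positive half-line; this is exactly the configuration in which the non-decreasing ratio $\Phi(\cdot)/(\cdot)^a$ lets me compare its values at the two points $tx$ and $x$.

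Concretely, since the function $y\mapsto \Phi(y)/y^a$ is non-decreasing on $(0,+\infty)$ and $tx\leq x$, I would write
$$\frac{\Phi(tx)}{(tx)^a}\leq \frac{\Phi(x)}{x^a}.$$
Multiplying both sides by the positive quantity $(tx)^a=t^a x^a$ then yields
$$\Phi(tx)\leq t^a x^a\cdot\frac{\Phi(x)}{x^a}=t^a\Phi(x),$$
which is exactly the claimed bound. The boundary case $t=1$ is covered automatically, giving the trivial equality $\Phi(x)\leq\Phi(x)$.

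There is essentially no obstacle here; the only point requiring a moment's care is to record that $tx>0$, so that the ratio $\Phi(tx)/(tx)^a$ is well defined and multiplication by $(tx)^a$ preserves the inequality, and this is guaranteed by $t>0$ and $x>0$. I note in passing that the differentiability hypothesis on $\Phi$ is not actually used in this argument; it is presumably included only to match the hypotheses of the companion Lemma \ref{Young1}, so I would not rely on it. If one preferred a differential argument, an alternative would be to observe that for differentiable $\Phi$ the hypothesis is equivalent to $x\Phi'(x)\geq a\Phi(x)$, whence a short computation shows that $s\mapsto \Phi(sx)/s^a$ is non-decreasing on $(0,1]$; comparing its values at $s=t$ and $s=1$ then gives the claim. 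The monotonicity argument above is cleaner, however, and requires no regularity whatsoever.
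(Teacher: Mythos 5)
Your argument is correct: the inequality follows in one line from the monotonicity of $y\mapsto\Phi(y)/y^a$ applied at the points $tx\leq x$, and you are right that differentiability is never used. The paper does not prove this lemma itself (it defers to [B-R2] and [R-R]), and your proof is exactly the standard argument one would find there.
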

Now we can show the main concentration inequality for the infinite dimensional Gibbs measure.
\begin{proposition}\label{prop2.5} Assume that the local specification $\{\mathbb{E}^{\Lambda,\omega}\}_{\Lambda \subset\subset \mathbb{Z}^d,\omega\in \Omega}$  is  such that   (H0), (H1)  and (H2). Then for $J_0$ small enough, we have that for every $F:\mathbb{R}^N\rightarrow\mathbb{R}$,  for $N \subset \subset \mathbb{Z}^d$, with $\sum_{i\in N}H(\nabla_iF)\leq a $ $\mu-$a.e. the following holds$$ \mu \left( \{F-\mu (F)\geq r\} \right)\leq    e^{-\frac{ac\ddot C}{2 }\omega^*_H\left( \frac{2r}{ac} \right)}$$ for every $J_0,a,r$ such that $\left(\frac{1}{2^{t^*}C_2^{t^*-1}}\right)^k\geq k+1$ and  $ac\ddot C\omega^*_H\left( \frac{2r}{ac} \right)\geq 8\ln2$ and constants $C_1>0$,  $0<C_2<1$ and $\ddot C=\frac{1}{2^{2t^{*}}C_1^{t^*-1} }$ for $t^*$ the conjugate of $t$, where  $t$ is as in (H2).  \end{proposition}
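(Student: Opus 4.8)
The plan is to obtain the bound by a telescoping decomposition of the global deviation $F-\mu F$ along the annuli $\Lambda_k$, so that each increment is controlled by Corollary \ref{cor2.4}, and then to balance the resulting sum of exponentials by means of the scaling properties of $\omega_H^*$ supplied by (H2) together with Lemmas \ref{Young1}--\ref{Young2}. Fix $s$ with $N\subset\Lambda_{s-1}\cup\Lambda_s$. For $J_0$ small enough $\mu$ is unique and $\mathcal{B}^{k,s}F\to\mu F$ as $k\to\infty$ (the convergence treated in Section \ref{convergence}), so since $\mathcal{B}^{s,s}F=\mathbb{E}^{\Lambda_s}F$ the series telescopes and
\[
F-\mu F=(F-\mathbb{E}^{\Lambda_s}F)+\sum_{k\ge s}\big(\mathcal{B}^{k,s}F-\mathcal{B}^{k+1,s}F\big).
\]

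Next I would split the budget as $r=r_{-1}+\sum_{k\ge s}r_k$, for instance with $r_{-1}=r/2$ and $r_k=r\,2^{-(k-s)-2}$, so that $\{F-\mu F\ge r\}$ forces at least one increment to exceed its share. A union bound then gives
\[
\mu(F-\mu F\ge r)\le \mu\big(F-\mathbb{E}^{\Lambda_s}F\ge r_{-1}\big)+\sum_{k\ge s}\mu\big(\mathcal{B}^{k,s}F-\mathcal{B}^{k+1,s}F\ge r_k\big).
\]
Each summand is exactly Corollary \ref{cor2.4} after using the DLR identity $\mu\mathbb{E}^{\Lambda_{k+1}}(\,\cdot\,)=\mu(\,\cdot\,)$, yielding $e^{-a_k\omega_H^*(2r_k/a_k)}$ with $a_k=acC_1C_2^{k-s}$. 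The leading term $F-\mathbb{E}^{\Lambda_s}F$ is handled directly: $\mathbb{E}^{\Lambda_s}$ is a product of single-site measures satisfying $MLS(H_\Phi)$ by (H1) and tensorisation, so the computation of Lemma \ref{lem2.2} applied to $F$ itself (with $\sum_{i\in\Lambda_s}H(\nabla_i F)\le a$) puts it into the form required by Lemma \ref{lem2.3} with constant $K_F=ac$, giving $\mu(F-\mathbb{E}^{\Lambda_s}F\ge r_{-1})\le e^{-ac\,\omega_H^*(2r_{-1}/(ac))}$.

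The crux is summing the tail. Writing $X=2r/(ac)$ and $m=k-s$, one has $2r_k/a_k=X\phi_m$ with $\phi_m=\big(4C_1(2C_2)^m\big)^{-1}$. Since $\omega_H/x^t$ is non-increasing (the analogue for $\omega_H$ of the property in Remark \ref{Young0}, from (H2)), Lemma \ref{Young1} gives that $\omega_H^*/x^{t^*}$ is non-decreasing, whence $\omega_H^*(X\phi_m)\ge\phi_m^{t^*}\omega_H^*(X)$; inserting $\phi_m$ and using $4^{t^*}=2^{2t^*}$ to cancel one power of $C_1$ produces precisely
\[
a_k\,\omega_H^*(2r_k/a_k)\ \ge\ ac\,\ddot C\,\beta^{m}\,\omega_H^*(X),\qquad \beta:=\tfrac{1}{2^{t^*}C_2^{t^*-1}} .
\]
The hypothesis $\beta^{k}\ge k+1$ upgrades this to $a_k\omega_H^*(2r_k/a_k)\ge ac\ddot C(m+1)\omega_H^*(X)$, so with $u:=ac\ddot C\,\omega_H^*(X)$ the tail is a geometric series, $\sum_{k\ge s}e^{-a_k\omega_H^*(2r_k/a_k)}\le\sum_{m\ge0}e^{-u(m+1)}\le 2e^{-u}$, the last inequality by $u\ge 8\ln2$. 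The same condition forces both the leading term (compared via $\omega_H^*(X/2)\ge\tfrac14\omega_H^*(X)$, from $\omega_H^*/x^{2}$ non-increasing) and this tail to be at most $\tfrac12 e^{-u/2}$, and adding the two halves gives $e^{-u/2}=e^{-\frac{ac\ddot C}{2}\omega_H^*(2r/(ac))}$, as claimed.

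I expect the main obstacle to lie in this last step: the allocation $\{r_k\}$ must be chosen so that, after the two-sided power estimates for $\omega_H^*$ are inserted, the geometric decay $C_2^{k-s}$ of the coefficients $a_k$ is overcome by the growth drawn out of $\phi_m^{t^*}$, producing exactly the factor $\beta^m$ and hence the precise constant $\ddot C=2^{-2t^*}C_1^{-(t^*-1)}$. The delicate point is that the sharper $x^{t^*}$ lower bound for $\omega_H^*$ (rather than the weaker $x^2$ one) must be applicable in the relevant range $\phi_m\ge1$; this is where smallness of $J_0$ is genuinely used, forcing $C_2$ small so that $\beta>1$ and $\beta^{k}\ge k+1$. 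Together with $u\ge 8\ln2$ (equivalently $r>R$), which guarantees the series sum is dominated by a constant multiple of its leading term, these are exactly the two quantitative conditions appearing in the statement.
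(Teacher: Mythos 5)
Your proposal is correct and follows essentially the same route as the paper: the same telescoping decomposition $F-\mu F=(F-\mathcal{B}^{s,s}F)+\sum_k(\mathcal{B}^{k+s,s}F-\mathcal{B}^{k+s+1,s}F)$ justified by the a.e.\ convergence of $\mathcal{B}^{n,s}F$, the same dyadic budget $r/2,\ r/2^{k+2}$ with a union bound, Corollary \ref{cor2.4} for the increments, and the same homogeneity estimates $\omega_H^*(\theta x)\geq\theta^{t^*}\omega_H^*(x)$ from Remark \ref{Young0} and Lemmas \ref{Young1}--\ref{Young2} to produce the factor $\beta^m$ and the constant $\ddot C$, with the conditions $\beta^k\geq k+1$ and $ac\ddot C\,\omega_H^*(2r/(ac))\geq 8\ln 2$ used exactly as in the paper. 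The only cosmetic difference is that you rederive the leading term from the Lemma \ref{lem2.2}/\ref{lem2.3} machinery where the paper cites Proposition 26 of [B-R2]; the resulting bound is the same.
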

\begin{proof}  for any   $s\in \mathbb{N}$  such that $N\subset \Lambda_{s-1}\cup\Lambda_{s}$, we can  write
$$F-\mu F=F-\mathcal{B}^{s,s}F+\sum_{k=0}^{n}(\mathcal{B}^{k+s,s}F-\mathcal{B}^{k+s+1,s}F)+\mathcal{B}^{s+1+n,s}F-\mu F$$
for any $n\geq 1$. The following lemma will allow us to take the limit of $n$ to infinity.
 \begin{Lemma}  \label{lem254PapIP}
Suppose the local specification  $\{\mathbb{E}^{\Lambda,\omega}\}_{\Lambda\subset\subset \mathbb{Z}^d,\omega \in\Omega}$ is such that  {\rm \textbf{(H0)}} and {\rm \textbf{(H1)}}/{\rm \textbf{(H3)}}  and that $J_0$ is sufficiently small.  Then, for any $f:\mathbb{R}^N\rightarrow \mathbb{R}$, for $N\subset \subset \mathbb{Z}^d$, we have that $\mathcal{B}^{n,s}f$ converges
$\mu$-almost everywhere to $\mu f$,  where   $s\in[0,n]$ such that $N\subset \subset \Lambda_{s-1}\cup \Lambda_s$.  In particular, $\mu$ is unique.\end{Lemma} 
The proof of this lemma will be presented in section \ref{convergence}. Since from Lemma \ref{lem254PapIP} we have that $\lim _{n\rightarrow \infty}\mathcal{B}^{n,s}F=\mu F$ $\mu-$a.e., taking the limit above leads to $$F-\mu F=F-\mathcal{B}^{s,s}F+\sum_{k=0}^{\infty}(\mathcal{B}^{k+s,s}F-\mathcal{B}^{k+s+1,s}F)$$
$\mu-$a.e. Therefore we have \begin{align*}\{F-&\mu F<r\}\supseteq\ \left\{ F-\mathcal{B}^{s,s}F <\frac{r}{2}\right\} \cap \left\{\cap_{k=0}^{\infty}\{\mathcal{B}^{k+s,s}F-\mathcal{B}^{k+s+1,s}F<\frac{r}{2^{k+2}}\}\right\}\end{align*}
$\mu-$a.e., which  leads to  
\begin{align}\label{eq1prop2.5}\mu  \left( \{F-\mu (F)\geq r\} \right) \nonumber    \leq &    \mu \left(  \left\{ F-\mathcal{B}^{s,s}F \geq\frac{r}{2}\right\}  \right)\\   &+\sum_{k=0}^{\infty}\mu\left( \{\mathcal{B}^{k+s,s}F-\mathcal{B}^{k+s+1,s}F\geq\frac{r}{2^{k+2}}\}\right)\end{align}
Since $\sum_{i\in N}H(\nabla_iF)\leq a$ $\mu-$a.e. we can use Proposition 26 from [B-R2] to bound the first term and Corollary \ref{cor2.4} to bound the second term on the right hand side of (\ref{eq1prop2.5}). This leads to   \begin{align}\label{neq6-7nlem3.8}\mu \left( \{F-\mu (F)\geq r\} \right)\leq e^{-ac\omega^*_H\left( \frac{2r}{2ac} \right)}+ &\sum_{k=0}^{\infty} e^{-acC_{1} C_2^{k}\omega^*_H\left( \frac{2r}{2^{k+2}acC_{1} C_2^{k}} \right)} \end{align}
Since from hypothesis (H2) there exists a $t>2$ such that $\frac{H(x)}{x^t}$ is non-increasing on $(0,+\infty)$, from Lemma \ref{Young1} for $t^*>1$ the dual of $t$, we have that $\frac{H^*(x)}{x^{t^*}}$ is non-decreasing.  Then,  if we combine  Remark \ref{Young0} together with  Lemma  \ref{Young2}, we obtain that for any $\theta>1$ and $x\geq 0$ we have that $\omega_H^*(\theta x)\geq \theta^{t^*}\omega_H^*(x)$. This gives the following bounds
\begin{equation}\label{neq6-7nlem3.8+1-1NeqEquation}e^{-ac\omega^*_H\left( \frac{2r}{2ac} \right)}\leq e^{-\frac{ac}{2^{t^*}}\omega^*_H\left( \frac{2r}{ac} \right)}\end{equation}and
$$e^{-acC_{1} C_2^{k}\omega^*_H\left( \frac{2r}{2^{k+2}acC_{1} C_2^{k}} \right)}\leq e^{-\frac{ac}{2^{2t^{*}}C_1^{t^*-1} }\omega^*_H\left( \frac{2r}{ac} \right)\left(\frac{1}{2^{t^*}C_2^{t^*-1}}\right)^k}$$
 Since $C_2$ can be as small as possible for small enough $J_0$, if we choose $J_{0}$ small enough so that $\left (\frac{1}{2^{t^*}C_2^{t^*-1}}\right)^k\geq k+1$ for all $k\in \mathbb{N}$ and $a, r$ such that $\frac{ac}{2^{2t^{*}}C_1^{t^*-1} }\omega^*_H\left( \frac{2r}{ac} \right)\geq 2\ln2$ we then get 
\begin{equation}\label{neq6-7nlem3.8+1} e^{-acC_{1} C_2^{k}\omega^*_H\left( \frac{2r}{2^{k+2}acC_{1} C_2^{k}} \right)}   \leq e^{-\frac{ac}{2^{2t^*+1}C_1^{t^*-1} }\omega^*_H\left( \frac{2r}{ac} \right)}\frac{1}{2^{k+1}}\end{equation} If we plug (\ref{neq6-7nlem3.8+1-1NeqEquation}) and (\ref{neq6-7nlem3.8+1})  in (\ref{neq6-7nlem3.8}), for $r$  large enough such that  $\frac{ac}{2^{2t^{*}+1}C_1^{t^*-1} }\omega^*_H\left( \frac{2r}{ac} \right)\geq4\ln2$  we finally obtain  \begin{align*}\mu \left( \{F-\mu (F)\geq r\} \right)\leq &e^{-\frac{ac}{2^{t^*}}\omega^*_H\left( \frac{2r}{ac} \right)}+ \sum_{k=0}^{\infty} e^{-\frac{ac}{2^{2t^{*}+1}C_1^{t^*-1} }\omega^*_H\left( \frac{2r}{ac} \right)}\frac{1}{2^{k+1}}\\   \leq &e^{-\frac{ac}{2^{2t^{*}+2}C_1^{t^*-1} }\omega^*_H\left( \frac{2r}{ac} \right)} \\ &\end{align*}
 \end{proof} We can now prove  Theorem \ref{theorem1}.

~

\noindent
\textbf{\textit{Proof of Theorem \ref{theorem1}.}} The proof follows directly from  Theorem 26 and 27 of [B-R2] and  Proposition \ref{prop2.5}.  Let $A\subset\Omega$ with $\mu (A)\geq \frac{1}{2}$ and define $$F_A(x)=\inf_{z\in A}\sum_{i\in N}H^*(x_i-z_i)$$ for $x=(x_i)_{i\in N}$. It is shown in Theorem 27 of [B-R2] that for the function  $F=\min(F_A,r)$ for $r>0$ one has that
\begin{align}\label{eq1prop2.6}\sum_{i\in N}H(\nabla_i F)\leq \omega_H(2)r
\end{align}
$\mu-$a.e. If we choose $J_{0}$ sufficiently small such that $\left( \frac{1}{2^{t^*}C_2^{t^*-1}}\right)^k\geq k+1$ for all $k\in \mathbb{N}$ and  $a=\omega_H(2)r$ then from  Proposition \ref{prop2.5}
we obtain that\begin{align}\label{eq2prop2.6}\mu \left( \{F-\mu (F)\geq \frac{r}{2}\} \right)\leq e^{-\frac{r\omega_H(2)\ddot C}{2}\omega^*_H\left( \frac{1}{\omega_H(2)c } \right)}\end{align} for every $r\geq R=\left(\omega_H(2)\ddot C \omega^*_H\left( \frac{2}{\omega_H(2)c } \right)\right)^{-1}16\ln2$. The rest of the proof follows [B-R2]. Since     $F_A=0$ on $A$ we get $\int\left(\{F\geq r\}\right)\leq r(1-\mu (A))\leq \frac{r}{2}$, which implies that $\{F\geq r\}\subset \{F-\mu (F)\geq \frac{r}{2}\}$. This together with inequality (\ref{eq2prop2.6}) gives
\begin{align*}\mu \left(\{F\geq r\}\right)\leq \mu \left(\left\{F-\mu (F)\leq\frac{r}{2}\right\}\right)\leq e^{-\frac{r\omega_H(2)\ddot C}{2}\omega^*_H\left( \frac{1}{\omega_H(2)c } \right)}
\end{align*}
Then the result follows from the following observation
$$\{F<r\}=\{F_A<r\}\subset A+\left\{x:\sum_{i\in N}H^*(x_i)<r\right\}$$ \qed
\section{A Perturbation Result for  the Log-Sobolev  Inequality.}
In Theorem \ref{theorem1} it was discussed how concentration properties can be obtained for the infinite dimensional Gibbs measure under the  main hypothesis  that the one site measure  $\mathbb{E}^{i,\omega}$ satisfied a Modified Log-Sobolev inequality with a constant uniformly with respect to the boundary conditions $\omega$. 
In this section we are concerned with the stronger Log-Sobolev inequality but we relax the (H1) hypothesis to one about the boundary free measure  \begin{equation}\label{nu meas}\nu(dx_{i})=\frac{e^{-\phi(x_{i})}dx_i}{\int e^{-\phi(x_{i})}dx_i}\end{equation} In this way a perturbation result is going to be shown for non log-concave measures for which  [B-E] and [B-H] cannot be applied. As a matter of fact, we will show that having relaxed the hypothesis (H1), the Gibbs measure satisfies  concentration properties similar to the ones that are true in the case of the Modified Log-Sobolev  inequality instead of the usual concentration properties that are associated with the Log-Sobolev inequality.

    We assume
that the one dimensional without interactions (boundary-free)  measure $\nu$  satisfies an
(LS) inequality and we  determine  conditions under which,  the  infinite volume Gibbs measure associated with the  local specification  $\{\mathbb{E}^{\Lambda,\omega}\}_{\Lambda\subset \subset \mathbb{Z}^d,\omega\in \Omega}$ as in (\ref{locspec}) with interactions $\Vert \nabla_i\nabla_jV(x_i,x_j)\Vert_\infty<\infty$,  satisfies concentration properties similar with the ones on Theorem \ref{theorem1}.

 Concerning perturbation properties related to the Spectral Gap inequality in infinite dimensions the following theorem due to [G-R] has been shown.
\begin{theorem}\label{Theoremby[G-R]}\textbf{([G-R])} If the measures $\nu(dx_{i})=\frac{e^{-\phi(x_{i})}dx_i}{\int e^{-\phi(x_{i})}dx_i}$ satisfy the Spectral Gap  inequality, then  the local specification  $\{\mathbb{E}^{\Lambda,\omega}\}_{\Lambda\subset \subset \mathbb{Z}^d,\omega\in \Omega}$ as in (\ref{locspec}) with interactions $\Vert \nabla_i\nabla_jV(x_i,x_j)\Vert_\infty<\infty$  satisfies the Spectral Gap inequality \begin{equation}\label{SGLocSpec]}\mathbb{E}^{\Lambda,\omega}\left( f-\mathbb{E}^{\Lambda,\omega}f \right)^2\leq \mathfrak{G}\mathbb{E}^{\Lambda,\omega}\vert \nabla_{\Lambda}f\vert^2\end{equation} with constant  $\mathfrak{G}$ uniformly in $\Lambda$ and the boundary $\omega$.
\end{theorem}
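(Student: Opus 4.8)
The plan is to obtain the finite-volume spectral gap from a single-site spectral gap together with the almost-product structure of $\mathbb{E}^{\Lambda,\omega}$, arranging that the constant $\mathfrak{G}$ depends only on the one-site gap and on the interaction, never on $\Lambda$ or $\omega$. The first step is to pass to the conditional level. Because the Hamiltonian couples only nearest neighbours, the one-site conditional measure is $\mathbb{E}^{i,\omega}(dx_i)\propto \exp\{-\phi(x_i)-\sum_{j\sim i}J_{ij}V(x_i,z_j)\}\,dx_i$, that is, the boundary-free measure $\nu$ of (\ref{nu meas}) tilted by the one-variable potential $W^\omega_i(x_i)=\sum_{j\sim i}J_{ij}V(x_i,z_j)$. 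I would first show that every such $\mathbb{E}^{i,\omega}$ inherits a spectral gap, with a constant $g_0$ uniform in the boundary data, by transferring the assumed spectral gap of $\nu$ through this tilt.

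With the uniform one-site gap in hand, I would move to a finite box using the bipartite splitting $\mathbb{Z}^d=\Gamma_0\cup\Gamma_1$ already set up in the paper. Conditionally on the coordinates in $\Gamma_1$ the spins in $\Gamma_0$ decouple (nearest-neighbour interactions never join two sites of the same sublattice), so on each sublattice $\mathbb{E}^{\Lambda,\omega}$ is a genuine product and the one-site gaps tensorise. I would then decompose the variance by the tower property across the two sublattices,
$$\mathrm{Var}_{\mathbb{E}^{\Lambda,\omega}}(f)\le \mathbb{E}^{\Lambda,\omega}\big[\mathrm{Var}^{\Gamma_0\mid\Gamma_1}(f)\big]+\mathrm{Var}^{\Gamma_1}\big(\mathbb{E}^{\Gamma_0\mid\Gamma_1}f\big),$$
bounding the first term immediately by $g_0\,\mathbb{E}^{\Lambda,\omega}|\nabla_{\Gamma_0}f|^2$ and treating the second by iterating the same decomposition.

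The point on which everything turns is the cross term, i.e. how the conditional average $\mathbb{E}^{\Gamma_0\mid\Gamma_1}f$ responds to the $\Gamma_1$-coordinates. Differentiating it in a variable $x_j$, $j\in\Gamma_1$, produces, besides the genuine gradient $\nabla_j f$, a covariance contribution whose magnitude is governed exactly by the mixed second derivative $\nabla_i\nabla_j V$ of the interaction, uniformly in the configuration. Under the hypothesis $\|\nabla_i\nabla_j V\|_\infty<\infty$ this yields an interdependence matrix with off-diagonal entries of order $J_{ij}\|\nabla_i\nabla_j V\|_\infty$; once $J_0\|\nabla_i\nabla_j V\|_\infty$ times the coordination number $2d$ is controlled, this matrix is a strict contraction and resumming the recursion as a Neumann (geometric) series gives a finite $\mathfrak{G}=\mathfrak{G}(g_0,J_0,d)$, independent of $\Lambda$ and $\omega$. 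An alternative giving the same bound more directly is to write the Helffer--Sj\"ostrand covariance representation $\mathrm{Var}(f)=\int(\nabla f,B^{-1}\nabla f)\,d\mathbb{E}^{\Lambda,\omega}$ and bound $B^{-1}$ by combining the one-site gap on the diagonal with the off-diagonal Hessian bound.

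The main obstacle I anticipate is the single-site step. The tilt $W^\omega_i$ is an \emph{unbounded} perturbation of $\phi$ --- indeed $\nabla_i W^\omega_i$ moves by $O(\|\nabla_i\nabla_j V\|_\infty\,|\omega_j|)$ as the boundary spins grow --- so the Holley--Stroock principle is not available and the uniform constant $g_0$ cannot be read off from any oscillation. Securing uniformity in $\omega$ is precisely where the standing assumption that $V$ grows slower than quadratically must be combined with $\|\nabla_i\nabla_j V\|_\infty<\infty$: one compares the Dirichlet form of the tilted one-site measure with that of $\nu$, or argues by convexity at infinity for the tilted one-dimensional potential, so that the spectral gap of $\nu$ survives the tilt without the constant deteriorating as the boundary configuration is taken large. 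The uniform control of the covariance correction in the cross term is the second delicate ingredient, and it is the one that the bounded mixed-derivative hypothesis is tailored to supply.
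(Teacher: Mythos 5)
The first thing to note is that the paper does not prove this statement at all: Theorem \ref{Theoremby[G-R]} is imported verbatim from [G-R] and used as a black box (its only role here is to feed Corollary \ref{NScor3-5} and the Spectral Gap steps in Section \ref{convergence}), so there is no in-paper proof to measure your argument against. Judged on its own terms, your architecture --- a uniform one-site gap for the tilted measures $\mathbb{E}^{i,\omega}$, tensorisation on each sublattice of the bipartite splitting $\Gamma_0\cup\Gamma_1$, a variance decomposition whose cross term is controlled by $J_{ij}\Vert\nabla_i\nabla_jV\Vert_\infty$ and resummed as a geometric series --- is the standard route, and the finite-volume half of it is essentially the machinery the paper itself builds in Section \ref{convergence} (the sweeping-out bound of Lemma \ref{lem252PapIP} and the geometric decay used in the proof of Lemma \ref{lem254PapIP}). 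That half is sound, with the caveat that it only delivers the conclusion for $J_0$ small; no such restriction appears in the displayed statement, though every use made of the theorem in this paper does assume $J_0$ small. You should also note that after integrating out $\Gamma_0$ the marginal on $\Gamma_1$ is no longer a product, so ``iterating the same decomposition'' is not immediate and is precisely what the alternating $\mathbb{E}^{\Lambda_k}$-scheme is designed to handle.

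The genuine gap is the single-site step, and the patch you offer for it does not work. You correctly observe that $W_i^\omega$ is an unbounded perturbation of $\phi$, so Holley--Stroock is unavailable; but you then propose ``convexity at infinity for the tilted one-dimensional potential'' as a fallback, and this is exactly what is excluded in the intended range of the theorem. The whole point of [G-R], and of Section 3 of this paper, is to treat phases that are \emph{not} convex at infinity (e.g.\ $\phi(x)=x^{p}+\vert x\vert^{p-1-\delta}\cos x$), and tilting by $W_i^\omega$ cannot restore a convexity that $\phi$ itself lacks. The mechanism that actually closes this step is a quantitative perturbation theorem for the Poincar\'e (or log-Sobolev) constant under \emph{unbounded} perturbations in the spirit of [A-S], which requires uniform control of exponential moments of the perturbation of the type $\log\mathbb{E}^{i,\omega}e^{\epsilon\tilde U_{i,\omega}}$ --- this is precisely the shape of the paper's own Lemma \ref{NSlemma3.3} and of hypothesis (H4), and it is where the slower-than-quadratic growth of $V$ genuinely enters. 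Without supplying such a lemma (or an equivalent Muckenhoupt/Hardy-type verification for the tilted one-dimensional density, uniform in $\omega$), the uniform constant $g_{0}$ on which the rest of your argument rests is asserted rather than proved.
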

This result is stronger than the corresponding inequality for the Gibbs measure, since (\ref{SGLocSpec]}) implies that the infinite dimensional Gibbs measure $\mu$ corresponding to the local specification $\{\mathbb{E}^{\Lambda,\omega}\}_{\Lambda\subset \subset \mathbb{Z}^d,\omega\in \Omega}$ satisfies the Spectral Gap inequality
\begin{equation*}\mu\left( f-\mu f \right)^2\leq \mathfrak{G}\mu\vert \nabla f\vert^2\end{equation*}
Here, although we don't eventually obtain the Log-Sobolev inequality for the infinite dimensional Gibbs measure, we show that concentration properties still hold true. However, these are weaker than the ones that hold  for the product measure associated with the Log-Sobolev inequality, and similar to the ones that the Gibbs measure satisfies under the hypothesis (H1) of a Modified Log-Sobolev inequality $MLS(H_\Phi)$ with $\Phi(x)=x^4$. We will work with the following hypotheses:
\begin{itemize}
\item[\textbf{(H0):}] The interaction $V$ is such that  $\left\Vert \nabla_i \nabla_j V(x_i,x_j) \right\Vert_{\infty}<\infty.$
 
\item[\textbf{(H3):}] The one dimensional single site measures $\nu(dx_{i})=\frac{e^{-\phi(x_{i})}dx_i}{\int e^{-\phi(x_{i})}dx_i}$ satisfy an (LS)  inequality with a constant $c$. 
\item[\textbf{(H4):}] $J_{i,j}\geq 0$, $V\geq 0$ and  $\exists\epsilon>0$ and $\check K>0$: $\mu (U_{i,\omega}^2)\leq \check K$, 
 where $U_{i,\omega}=\hat c\log\mathbb{E}^{ i,\omega}e^{\epsilon \tilde U_{i,\omega}}$ for $\tilde U_{i,\omega}=2c\sum_{j\sim i}\left\vert  \nabla_{i}V(x_{i},\omega_{j})
\right\vert^2+\frac{1}{J_0}{\sum_{j\sim i}V(x_{i},\omega_{j})}$.
 \end{itemize}
The main result follows.
\begin{theorem} \label{NStheorem1}Assume the local specification $\{\mathbb{E}^{\Lambda,\omega}\}_{\Lambda \subset\subset \mathbb{Z}^d,\omega\in \Omega}$ as in (\ref{locspec})  is such that    (H0) and (H4) hold and that $\nu$ satisfies (H3). Then, for sufficiently small $J_0$, the corresponding infinite dimensional Gibbs measure $\mu$ is unique and there exists $R>0$ such that  for  every Borel set $A\subset \Omega$ with $\mu(A)\geq\frac{1}{2}$
\begin{align*}1-\mu\left( A+\left\{ x: \  \sum_{i\in N}\vert x_i\vert^\frac{4}{3}<r \right\}  \right)\leq e^{-\hat Kr}  \text{,  \   for every\ } r>\max \left\{R,\frac{\vert N \vert}{\omega_{x^4}(2)}\right\}
\end{align*}
for any $N\subset \subset \mathbb{Z}^d$ and some $\hat K>0$.\end{theorem}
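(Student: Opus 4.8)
The plan is to recognise Theorem~\ref{NStheorem1} as the specialisation of Theorem~\ref{theorem1} to the nice Young function $\Phi(x)=x^4$, whose conjugate exponent is $4/3$; this is exactly where the region $\{\sum_{i\in N}\vert x_i\vert^{4/3}<r\}$ originates. First I would record that for $H:=H_\Phi$ with $\Phi(x)=x^4$ one has $H(x)=x^2$ for $\vert x\vert\leq1$ and $H(x)=x^4$ for $\vert x\vert\geq1$, so that $H(x)/x^2$ is non-decreasing while $H(x)/x^4$ is non-increasing. Hence (H2) holds automatically with $t=4$ and $t^*=4/3$, and need not be assumed. Since Lemmas~\ref{lem2.1}, \ref{lem2.3}, Corollary~\ref{cor2.4}, Proposition~\ref{prop2.5}, the convergence Lemma~\ref{lem254PapIP} and the final set-concentration argument are all already stated for the alternative hypothesis (H1)/(H3) together with (H2) (the spatial-decay Corollary~\ref{cor3.2} explicitly admits (H3)), the \emph{only} ingredient of Section~2 that genuinely invoked the one-site $MLS(H_\Phi)$ of (H1), and therefore the only step that must be reproved, is the one-site estimate of Lemma~\ref{lem2.2}.

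The heart of the argument is thus a one-site perturbation estimate, derived from the (LS) inequality of (H3) for the boundary-free measure $\nu$. I would write the one-site measure as a perturbation $d\mathbb{E}^{i,\omega}\propto e^{-W_i}\,d\nu$ of $\nu$, where $W_i=\sum_{j\sim i}J_{ij}V(x_i,\omega_j)\geq0$ because $J_{ij}\geq0$ and $V\geq0$ by (H4). Applying (LS) to $g=f e^{-W_i/2}$ and expanding both the Dirichlet form $\nu(\vert\nabla_i g\vert^2)$ and the entropy $Ent_\nu(g^2)$, one recovers $Ent_{\mathbb{E}^{i,\omega}}(f^2)$ bounded by $c\,\mathbb{E}^{i,\omega}(\vert\nabla_i f\vert^2)$ together with potential-type correction terms controlled by $\tfrac{c}{2}\vert\nabla_i W_i\vert^2$ and $W_i$; the residual normalisation $\log\nu(e^{-W_i})\leq0$ may be discarded since $W_i\geq0$. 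For $J_0$ small, $\tfrac{c}{2}\vert\nabla_i W_i\vert^2+W_i$ is dominated by $\tilde U_{i,\omega}=2c\sum_{j\sim i}\vert\nabla_iV(x_i,\omega_j)\vert^2+\tfrac{1}{J_0}\sum_{j\sim i}V(x_i,\omega_j)$, precisely the quantity of (H4). These corrections I would then absorb through the entropy variational inequality $\mathbb{E}^{i,\omega}(f^2\tilde U_{i,\omega})\leq\tfrac1\epsilon Ent_{\mathbb{E}^{i,\omega}}(f^2)+\tfrac1\epsilon\mathbb{E}^{i,\omega}(f^2)\log\mathbb{E}^{i,\omega}e^{\epsilon\tilde U_{i,\omega}}$, in which the boundary-free log-Laplace transform $U_{i,\omega}=\hat c\log\mathbb{E}^{i,\omega}e^{\epsilon\tilde U_{i,\omega}}$ appears. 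The outcome is a one-site log-Sobolev estimate for $\mathbb{E}^{i,\omega}$ whose leading term is the quadratic energy (which, since $H\geq x^2$, is dominated by $\mathbb{E}^{i,\omega}(H(\nabla_i f/f)f^2)$) and which carries a boundary-dependent term governed by $U_{i,\omega}$.

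Next I would globalise exactly as in Lemma~\ref{lem2.2}: insert this one-site estimate into the tensorised measure $\mathbb{E}^{\Lambda_{k+1},\omega}=\otimes_{j\in\Lambda_{k+1}}\mathbb{E}^{j,\omega}$, run the Herbst step with $f^2=e^{\lambda\mathcal{B}^{k,s}F}$, and bound $\vert\nabla_j h\vert^2\leq H(\nabla_j h)$ together with the Lipschitz hypothesis $\sum_{i\in N}H(\nabla_i F)\leq a$ and the elementary inequality $\lambda^2\leq C\,\omega_H(\tfrac\lambda2)$ valid for $H=H_{x^4}$, so that the energy contribution is repackaged as $\omega_H(\tfrac\lambda2)$. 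The spatial decay $C_1C_2^{k-s}$ is supplied by Lemma~\ref{lem2.1} (which holds under (H3) via Corollary~\ref{cor3.2}), and when the Gibbs measure $\mu$ is applied the boundary term $U_{i,\omega}$ is handled by Cauchy--Schwarz and the $L^2(\mu)$ control $\mu(U_{i,\omega}^2)\leq\check K$ of (H4). This recovers the conclusion of Lemma~\ref{lem2.2} with $H=H_{x^4}$ (the boundary term surviving only as a constant bounded through $\check K$), after which Lemmas~\ref{lem2.3}, \ref{cor2.4}, Proposition~\ref{prop2.5}, the convergence Lemma~\ref{lem254PapIP} and the set-concentration step built on Theorems~26--27 of [B-R2] apply verbatim, yielding $1-\mu(A+\{\sum_{i\in N}H^*(x_i)<r\})\leq e^{-\hat Kr}$. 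Finally, since $H^*(x)=x^2/4$ for small $\vert x\vert$ and $H^*(x)\asymp\vert x\vert^{4/3}$ for large $\vert x\vert$, I would restrict to $r>\max\{R,\vert N\vert/\omega_{x^4}(2)\}$, so that the region $\{\sum_{i\in N}\vert x_i\vert^{4/3}<r\}$ is comparable to $\{\sum_{i\in N}H^*(x_i)<r'\}$, the per-site discrepancy in the quadratic regime summing to the $\vert N\vert/\omega_{x^4}(2)$ threshold and the remaining constants absorbed into $\hat K$.

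The main obstacle is the one-site perturbation step of the second paragraph. Unlike the Holley--Stroock situation, the perturbation $W_i$ is \emph{unbounded} and depends on the boundary $\omega$, so it can only be controlled through the exponential moment $U_{i,\omega}$ and, after passing to $\mu$, through its second moment $\check K$ rather than uniformly. It is exactly this averaged ($L^2(\mu)$, exponential-moment) control, in place of an $L^\infty$ oscillation bound, that prevents the preservation of the Gaussian function $x^2$ and forces the weaker $x^4$ behaviour of the final concentration — matching the intuition stated in the introduction that the result mimics an $MLS(H_\Phi)$ with $\Phi(x)=x^4$ rather than a genuine (LS) inequality for $\mu$. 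The accompanying technical point is to choose $\epsilon$, $\hat c$ and $J_0$ small enough that the correction is fully absorbed and the boundary contribution does not destroy the convergence of the geometric series $\sum_k C_2^{k}$ in Proposition~\ref{prop2.5}; this is what fixes the smallness of $J_0$ and the admissible range $r>\max\{R,\vert N\vert/\omega_{x^4}(2)\}$.
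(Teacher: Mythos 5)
Your proposal follows essentially the same route as the paper: the paper likewise reduces everything to a one-site perturbation estimate for $\mathbb{E}^{i,\omega}$ derived from (H3) via the Aida--Shigekawa argument (its Lemma \ref{NSlemma3.3}, using the entropy variational inequality, the Rothaus recentering and the Gentil--Roberto spectral gap), then reruns the Herbst/decay machinery of Section 2 with the boundary term $U_{i,\omega}$ absorbed through Young's inequality and the bound $\mu(U_{i,\omega}^2)\leq\check K$, which is exactly where the quartic hypothesis $\sum_{i\in N}\vert\nabla_i F\vert^4\leq a$ and the threshold $a\geq\vert N\vert$ enter. Your identification of the $x^4$ behaviour and of the restriction $r>\vert N\vert/\omega_{x^4}(2)$ matches the paper's Lemma \ref{NSlem3.6} and its concluding paragraph.
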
 As a consequence of this, the analogue of the (\ref{Talagrand}) Talagrand type inequality follows (see [B-R2]). \begin{cor}\label{NScor1}Assume the local specification $\{\mathbb{E}^{\Lambda,\omega}\}_{\Lambda \subset\subset \mathbb{Z}^d,\omega\in \Omega}$ as in (\ref{locspec})  is such that    (H0) and (H4) hold and that $\nu$ satisfies (H3).  Then, for sufficiently small $J_0$, the corresponding infinite dimensional Gibbs measure $\mu$ is unique and  for every Borel set $A\subset \Omega$ with $\mu(A)\geq\frac{1}{2}$ we have $$\mu \left(A+\sqrt{r}B_2+\frac{1}{\omega_{\vert x \vert^\frac{4}{3}}^{-1}(\frac{1}{r})}B_{\vert x\vert^\frac{4}{3}}\right)\geq 1-e^{-Cr}$$ for  $C>0$ and $r>\max \left\{R,\frac{\vert N \vert}{\omega_{x^4}(2)}\right\}$ for $\mu$  on $ N \subset \subset \mathbb{Z}^d$.\end{cor}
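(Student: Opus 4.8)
The plan is to deduce the corollary from Theorem \ref{NStheorem1} by the same mechanism that turns the concentration statement of Theorem \ref{Theorem[B-R2]} into the Talagrand inequality (\ref{Talagrand}) in [B-R2], the essential point being that for the present specification the relevant Young function is $H=H_\Phi$ with $\Phi(x)=x^4$, whose conjugate $\Phi^\ast$ is a constant multiple of $|x|^{4/3}$, a pure power. Theorem \ref{NStheorem1} already supplies the probabilistic content: for every Borel $A$ with $\mu(A)\geq\tfrac12$ and every $r>\max\{R,|N|/\omega_{x^4}(2)\}$ one has $1-\mu(A+\{x:\sum_{i\in N}|x_i|^{4/3}<r\})\leq e^{-\hat K r}$. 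Everything that remains is a scaling computation on the enlargement set.

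The key observation is that, because $|x|^{4/3}$ is homogeneous, the function $\omega_{|x|^{4/3}}$ is the explicit power $\omega_{|x|^{4/3}}(t)=t^{4/3}$, so $\omega_{|x|^{4/3}}^{-1}(1/r)=r^{-3/4}$ and the Orlicz part of the Talagrand enlargement is, exactly,
\[
\frac{1}{\omega_{|x|^{4/3}}^{-1}(1/r)}\,B_{|x|^{4/3}}\ =\ r^{3/4}B_{|x|^{4/3}}\ =\ \Big\{x:\ \sum_{i\in N}|x_i|^{4/3}<r\Big\}.
\]
Thus the Orlicz ball occurring in the corollary coincides, with matching parameter $r$, with the enlargement set of Theorem \ref{NStheorem1}. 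I would record this identity first, then note that the Minkowski summand $\sqrt r\,B_2$ only makes the set larger, so the Talagrand enlargement contains the Orlicz enlargement of the theorem.

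The conclusion then follows by monotonicity of $\mu$: for $r>\max\{R,|N|/\omega_{x^4}(2)\}$,
\[
\mu\Big(A+\sqrt r\,B_2+\tfrac{1}{\omega_{|x|^{4/3}}^{-1}(1/r)}B_{|x|^{4/3}}\Big)\ \geq\ \mu\Big(A+\big\{x:\textstyle\sum_{i\in N}|x_i|^{4/3}<r\big\}\Big)\ \geq\ 1-e^{-\hat K r},
\]
so the assertion holds with $C=\hat K$ and with the admissible range of $r$ inherited verbatim from Theorem \ref{NStheorem1}. The Euclidean ball $\sqrt r\,B_2$ is retained only so as to present the estimate in the standard Talagrand form of (\ref{Talagrand}); for the present pure-power derivation it is a harmless enlargement and plays no essential role.

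For the power case $\Phi(x)=x^4$ the argument therefore reduces to the scaling identity above, so the entire analytic difficulty is concentrated in Theorem \ref{NStheorem1}; both statements are phrased directly in terms of $|x|^{4/3}$, and no conversion between $\Phi^\ast$ and $|x|^{4/3}$ is even required. The only point demanding attention is that the threshold $|N|/\omega_{x^4}(2)=|N|/16$ passes unchanged from the theorem to the corollary, which it does because the parameter $r$ is common to both. Had $\Phi$ been a general nice Young function rather than a pure power, the real obstacle would instead be the genuine two-scale splitting of $\{x:\sum_{i\in N} H^\ast(x_i)<r\}$ into a Euclidean ball plus a scaled Orlicz ball, using the submultiplicativity and convexity of $\omega_{\Phi^\ast}$ from Remark \ref{Young0} together with the monotonicity in Lemmas \ref{Young1} and \ref{Young2}; the homogeneity of $|x|^{4/3}$ is exactly what removes that difficulty here.
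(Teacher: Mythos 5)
Your argument is correct, and it is essentially the route the paper intends: the paper offers no explicit proof of this corollary beyond the pointer to [B-R2], where the Talagrand form is extracted from the concentration statement by splitting the enlargement set into a Euclidean ball plus a scaled Orlicz ball. Your scaling identity $\omega_{|x|^{4/3}}(t)=t^{4/3}$, hence $\frac{1}{\omega_{|x|^{4/3}}^{-1}(1/r)}B_{|x|^{4/3}}=\{x:\sum_{i\in N}|x_i|^{4/3}<r\}$, is exactly right, and you correctly observe that in this pure-power case the general two-scale decomposition of [B-R2] degenerates: the Orlicz ball alone already coincides with the enlargement set of Theorem \ref{NStheorem1}, the summand $\sqrt{r}B_2$ contains $0$ and so only enlarges the set, and monotonicity of $\mu$ finishes the proof with $C=\hat K$ and the range of $r$ inherited from the theorem. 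This is, if anything, more transparent than deferring to the general [B-R2] mechanism, since it makes clear why the Euclidean ball plays no essential role here. The only point worth a remark is a harmless constant: the conjugate of $x^4$ is $\frac{3}{4^{4/3}}|y|^{4/3}$ rather than $|y|^{4/3}$, so if one insists on starting from the $H^*$-formulation underlying the proof of the theorem (via $F_A(x)=\inf_{z\in A}\sum_{i\in N}H^*(x_i-z_i)$) there is a multiplicative constant $3\cdot 4^{-4/3}<1$ to track; since it only shrinks the relevant set, it can be absorbed into $\hat K$ and does not affect your conclusion.
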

Before the proof is presented we discuss some examples of measures that satisfy the hypothesis of the theorem. We are interested primarily in measures that satisfy  hypothesis (H3), for which in addition   the one site measure $\mathbb{E}^{i,\omega}$ does not satisfy the Log-Sobolev inequality uniformly on the boundary conditions, because in that  case the stronger Theorem \ref{Theorem[I-P]} can be applied.  Since for $\phi$ convex or convex at infinity, the Log-Sobolev inequality for $\mathbb{E}^{i,\omega}$ can be obtained from [B-E], [B-L2] and [B-H], we focus on non log-concave measures that go beyond convexity at infinity.    In [B-Z] a theorem is presented that allows to produce a variety of measures which satisfy the (LS) inequality but which go beyond convexity at infinity (see Theorem 5.5 in [B-Z]). An example of such a measure is as follows
$$\phi(x)=x^{p}+\vert x \vert^{p-1-\delta}\cos x$$
for $p>2$ and $\delta\in(0,1)$. In addition, since the phase $\phi$ dominates the interactions (H4) also follows.

We first state a  useful corollary which comes as a direct consequence of   Theorem \ref{Theoremby[G-R]} and Remark \ref{remark0}.
\begin{cor}\label{NS3.[G-R]}\label{NScor3-5}If the measures $\nu(dx_{i})=\frac{e^{-\phi(x_{i})}dx_i}{\int e^{-\phi(x_{i})}dx_i}$ satisfy the (LS)  inequality with a constant $c$, then  the one site measures  $\{\mathbb{E}^{i,\omega}\}_{ i \in \mathbb{Z}^d,\omega\in \Omega}$ as in (\ref{locspec})  for which (H0) is true,  satisfy a Spectral Gap inequality, say with constant $\hat c$.
\end{cor}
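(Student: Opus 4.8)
The plan is to chain the two cited ingredients together. First I would pass from the Log-Sobolev inequality to the weaker Spectral Gap inequality at the level of the boundary-free measure: since by (H3) each $\nu$ satisfies the (LS) (equivalently (LS2)) inequality with constant $c$, Remark \ref{remark0}(ii) immediately guarantees that $\nu$ also satisfies the Spectral Gap inequality (SG2), with constant $c_0\leq \frac{c}{2}$. This is precisely the input required to apply the perturbation result of [G-R].

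Second, I would invoke Theorem \ref{Theoremby[G-R]}. Hypothesis (H0) supplies exactly the needed bound $\left\Vert \nabla_i\nabla_j V(x_i,x_j)\right\Vert_\infty<\infty$ on the interactions, and the previous step has verified that the boundary-free measures $\nu$ satisfy the Spectral Gap inequality. Hence the hypotheses of Theorem \ref{Theoremby[G-R]} are met, and we conclude that the full local specification $\{\mathbb{E}^{\Lambda,\omega}\}$ satisfies the Spectral Gap inequality (\ref{SGLocSpec]}) with a constant $\mathfrak{G}$ that is uniform in $\Lambda$ and in the boundary $\omega$.

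Finally, I would specialize to the singleton $\Lambda=\{i\}$, for which $\nabla_\Lambda f=\nabla_i f$, to obtain
$$\mathbb{E}^{i,\omega}\left( f-\mathbb{E}^{i,\omega}f\right)^2\leq \hat c\,\mathbb{E}^{i,\omega}\vert \nabla_i f\vert^2$$
with $\hat c=\mathfrak{G}$, uniformly over $i\in\mathbb{Z}^d$ and $\omega\in\Omega$, which is the assertion of the corollary. There is essentially no obstacle here, since both implications are used as black boxes; the only point worth emphasizing is that the uniformity of $\hat c$ with respect to the boundary conditions is inherited directly from the corresponding uniformity in the conclusion of Theorem \ref{Theoremby[G-R]}, and therefore requires no further argument.
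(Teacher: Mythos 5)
Your proposal is correct and follows exactly the route the paper intends: the paper states the corollary is a direct consequence of Theorem \ref{Theoremby[G-R]} and Remark \ref{remark0}, which is precisely your chain of passing from (LS) to (SG2) for $\nu$ via Remark \ref{remark0}(ii), invoking the [G-R] perturbation theorem under (H0), and specializing to $\Lambda=\{i\}$. No gaps.
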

Under the condition (H3) the following weak form of Log-Sobolev type inequality for the one site measure $\mathbb{E}^{i,\omega}$ can be shown.
\begin{Lemma}\label{NSlemma3.3} If $J_{i,j}\geq 0$, $V\geq 0$ and (H3) holds for  the local specification $\{\mathbb{E}^{\Lambda,\omega}\}_{\Lambda \subset\subset \mathbb{Z}^d,\omega\in \Omega}$ then for $J_0$ sufficiently small, there exists an $\hat R\geq 0$ such that the one site measure $\mathbb{E}^{i,\omega}$ satisfies the following inequality 
\begin{equation*}\nonumber \mathbb{E}^{ i,\omega}(f^2log\frac{f^2}{\mathbb{E}^{ i,\omega}f^2}) \leq \hat R\mathbb{E}^{ i,\omega}\left(\left\vert \nabla_{i} f
\right\vert^2\right)+J_0U_{i,\omega}\mathbb{E}^{ i,\omega} \left(\left\vert \nabla_{i} f
\right\vert^2\right) 
\end{equation*}
where $U_{i,\omega}= \hat c\log\mathbb{E}^{ i,\omega}e^{\epsilon \tilde U_{i,\omega}}$ for $\tilde U_{i,\omega}=2 c\sum_{j\sim i}\left\vert  \nabla_{i}V(x_{i},\omega_{j})
\right\vert^2+\frac{1}{J_0}{\sum_{j\sim i}V(x_{i},\omega_{j})}$.\end{Lemma}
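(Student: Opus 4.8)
The plan is to treat the one-site measure $\mathbb{E}^{i,\omega}$ as a perturbation of the boundary-free measure $\nu$ and transport the (LS) inequality of (H3) across the perturbation. Writing $W_\omega(x_i)=\sum_{j\sim i}J_{ij}V(x_i,\omega_j)$, we have $d\mathbb{E}^{i,\omega}=Z^{-1}e^{-W_\omega}\,d\nu$ with $Z=\nu(e^{-W_\omega})\le 1$, since $V\ge 0$ and $J_{ij}\ge 0$ force $W_\omega\ge 0$. First I would apply the (LS) inequality for $\nu$ to the test function $h=fe^{-W_\omega/2}/\sqrt{Z}$, so that $h^2=f^2\,d\mathbb{E}^{i,\omega}/d\nu$. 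Expanding the entropy and using $\log(d\mathbb{E}^{i,\omega}/d\nu)=-W_\omega-\log Z$ yields the identity
$$Ent_{\mathbb{E}^{i,\omega}}(f^2)=Ent_\nu(h^2)+\mathbb{E}^{i,\omega}(f^2W_\omega)+\mathbb{E}^{i,\omega}(f^2)\log Z,$$
where the (LS) inequality bounds $Ent_\nu(h^2)\le c\,\nu(|\nabla h|^2)=c\,\mathbb{E}^{i,\omega}\big(|\nabla_i f-\tfrac12 f\nabla_i W_\omega|^2\big)$.

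Dropping $\log Z\le 0$ and using $(a+b)^2\le 2a^2+2b^2$ on the Dirichlet term I obtain the defective estimate
$$Ent_{\mathbb{E}^{i,\omega}}(f^2)\le 2c\,\mathbb{E}^{i,\omega}(|\nabla_i f|^2)+\mathbb{E}^{i,\omega}\Big(f^2\big(\tfrac{c}{2}|\nabla_i W_\omega|^2+W_\omega\big)\Big).$$
Then I would bound the bracket by the designed potential: using Cauchy--Schwarz together with $|J_{ij}|\le J_0$ gives $|\nabla_i W_\omega|^2\le 2dJ_0^2\sum_{j\sim i}|\nabla_i V(x_i,\omega_j)|^2$, while $W_\omega\le J_0\sum_{j\sim i}V(x_i,\omega_j)$. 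For $J_0$ small enough these combine into $\tfrac{c}{2}|\nabla_i W_\omega|^2+W_\omega\le J_0\tilde U_{i,\omega}$; this is precisely why the coefficient $2c$ sits in front of the gradient term in the definition of $\tilde U_{i,\omega}$.

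Next I would feed the remaining term into the entropy (Young) inequality $\mathbb{E}^{i,\omega}(f^2\,\epsilon\tilde U_{i,\omega})\le Ent_{\mathbb{E}^{i,\omega}}(f^2)+\mathbb{E}^{i,\omega}(f^2)\log\mathbb{E}^{i,\omega}(e^{\epsilon\tilde U_{i,\omega}})$. Dividing by $\epsilon$ and absorbing the resulting fraction $J_0/\epsilon<1$ of $Ent_{\mathbb{E}^{i,\omega}}(f^2)$ into the left-hand side (legitimate for $J_0$ small) turns $\log\mathbb{E}^{i,\omega}(e^{\epsilon\tilde U_{i,\omega}})=U_{i,\omega}/\hat c$ into a defective logarithmic Sobolev inequality of the form $Ent_{\mathbb{E}^{i,\omega}}(f^2)\le A\,\mathbb{E}^{i,\omega}(|\nabla_i f|^2)+B\,\mathbb{E}^{i,\omega}(f^2)$, with $A$ a fixed constant and $B$ proportional to $J_0U_{i,\omega}/\hat c$.

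Finally, to eliminate the additive $\mathbb{E}^{i,\omega}(f^2)$ term I would tighten the defective inequality. Applying it to the centered function $f-\mathbb{E}^{i,\omega}f$ and invoking Rothaus' inequality $Ent_{\mathbb{E}^{i,\omega}}(f^2)\le Ent_{\mathbb{E}^{i,\omega}}((f-\mathbb{E}^{i,\omega}f)^2)+2\,\mathbb{E}^{i,\omega}((f-\mathbb{E}^{i,\omega}f)^2)$, the $\mathbb{E}^{i,\omega}(f^2)$ contributions become the variance, which the Spectral Gap inequality of Corollary \ref{NScor3-5} bounds by $\hat c\,\mathbb{E}^{i,\omega}(|\nabla_i f|^2)$. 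The factor $\hat c$ then cancels the $\hat c$ carried in $B$, leaving a fixed constant $\hat R=A+2\hat c$ together with an $\omega$-dependent coefficient of order $J_0U_{i,\omega}$, as claimed. The main obstacle is exactly this last interplay: the entropy inequality unavoidably generates an $\mathbb{E}^{i,\omega}(f^2)$ term, and only the combination of Rothaus' inequality with the one-site Spectral Gap—whose constant is deliberately built into the definition of $U_{i,\omega}$—converts the estimate into a genuine weighted log-Sobolev inequality in which every term on the right carries the Dirichlet form.
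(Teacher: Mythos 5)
Your proposal is correct and follows essentially the same route as the paper: the paper obtains your defective inequality $Ent_{\mathbb{E}^{i,\omega}}(f^2)\leq \frac{2c}{\epsilon-J_0}\mathbb{E}^{i,\omega}|\nabla_i f|^2+J_0\,\mathbb{E}^{i,\omega}(f^2)\log\mathbb{E}^{i,\omega}e^{\epsilon\tilde U_{i,\omega}}$ by citing the Aida--Shigekawa perturbation theorem as a black box (which is exactly the change-of-measure plus entropy variational argument you spell out), and then, just as you do, tightens it via Rothaus' inequality applied to $f-\mathbb{E}^{i,\omega}f$ followed by the Spectral Gap of Corollary \ref{NScor3-5}. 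The only difference is bookkeeping of the constants in front of the $\log\mathbb{E}^{i,\omega}e^{\epsilon\tilde U_{i,\omega}}$ term, which is absorbed into $\hat R$ and the definition of $U_{i,\omega}$ in both treatments.
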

\begin{proof} The proof of the lemma is based on a perturbation result by [A-S].  Working as in [A-S], if we perturbe the hamiltonian $-\phi$ of the   measure $\nu(dx_i)$ with the function $h^i=-\sum_{j\sim i}J_{i,j}V(x_{i},\omega_{j})$, then for $J_0$ sufficiently small we obtain the following Log-Sobolev type inequality   
\begin{align}\label{NSeq3.9lem3.3}\mathbb{E}^{ i,\omega}(f^2log\frac{f^2}{\mathbb{E}^{ i,\omega}f^2}) \leq\frac{2c}{\epsilon-J_{0}}\mathbb{E}^{ i,\omega}\left\vert \nabla_{i} f
\right\vert^2+J_0\mathbb{E}^{ i,\omega} f^{2}\log\mathbb{E}^{ i,\omega}e^{\epsilon\tilde U_{i,\omega}} \end{align}
Where in the calculation of [A-S] we have also taken under account that $h^i$ is non positive.  From  [R], the following estimate
of the entropy holds
\begin{align*}\mathbb{E}^{ i,\omega}(\left\vert f\right\vert^2\log\frac{\left\vert f\right\vert^2}{\mathbb{E}^{ i,\omega}\left\vert f\right\vert^2}) \leq &A\mathbb{E}^{ i,\omega}\left\vert f-\mathbb{E}^{ i,\omega}f \right\vert^{2} +
\mathbb{E}^{ i,\omega}\left\vert f-\mathbb{E}^{ i,\omega}f \right\vert^2\log\frac{\left\vert f-\mathbb{E}^{ i,\omega}f \right\vert^2}{\mathbb{E}^{ i,\omega}\left\vert f-\mathbb{E}^{ i,\omega}f \right\vert^2}\end{align*}
  for some positive constant $A$. If we  use (\ref{NSeq3.9lem3.3}) to bound the second term on the right hand side of the last inequality we will obtain
\begin{align}\nonumber \mathbb{E}^{ i,\omega}(f^2log\frac{f^2}{\mathbb{E}^{ i,\omega}f^2}) \leq&\frac{2c}{\epsilon-J_{0}}\mathbb{E}^{ i,\omega}\left\vert \nabla_{i} f
\right\vert^2+\left(J_0\log\mathbb{E}^{ i,\omega}e^{\epsilon\tilde U_{i,\omega}}+A\right)\mathbb{E}^{ i,\omega} \left\vert f-\mathbb{E}^{ i,\omega}f \right\vert^{2} \end{align}
  We can bound the last term on the right hand side with the use
of the Spectral Gap inequality from Corollary \ref{NScor3-5}. This gives\\  \begin{align}\nonumber \mathbb{E}^{ i,\omega}(f^2log\frac{f^2}{\mathbb{E}^{ i,\omega}f^2}) \leq\left(\frac{2c}{\epsilon-J_{0}}+\hat cA\right)\mathbb{E}^{ i,\omega}\left\vert \nabla_{i} f
\right\vert^2+\hat cJ_0\log\mathbb{E}^{ i,\omega}e^{\epsilon\tilde U_{i,\omega}}\mathbb{E}^{ i,\omega} \left\vert \nabla_{i} f
\right\vert^2  \end{align}and the lemma follows for appropriate constant $\hat R$. \end{proof}
\begin{Lemma}\label{NSlem3.6}Assume that the local specification $\{\mathbb{E}^{\Lambda,\omega}\}_{\Lambda \subset\subset \mathbb{Z}^d,\omega\in \Omega}$  is  such that   (H0),  (H3) and (H4) hold.
Then,  for sufficiently small $J_0$, for every $F:\mathbb{R}^N\rightarrow\mathbb{R}$,  for $N \subset \subset \mathbb{Z}^d$, such that   $\sum_{i\in N}\left \vert \nabla_{i} F \right\vert^4\leq a$ $\mu-$a.e,  there exist constants $M>0$ and $0<C_3<1$ such that    \begin{align*}\nonumber\mu\left(\frac{1}{\mathbb{E}^{\Lambda_{k+1}}e^{\lambda \mathcal{B}^{k,s}F}}Ent_{\mathbb{E}^{\Lambda_{k+1}}} (e^{\lambda \mathcal{B}^{k,s}F})\right)\leq a& M\left(\frac{\lambda}{2}\right)^2C_3^{k-s}\end{align*}
for any $a$ such that $ a\geq \vert N\vert   \   \mu  \  \text{a.e.}  $
and for  $\lambda \geq 0$ and $k\geq s \in \mathbb{N}$ such that $N\subset \subset \Lambda_{s-1}\cup \Lambda_{s}$.
\end{Lemma}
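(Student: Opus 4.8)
The plan is to reproduce the scheme of Lemma \ref{lem2.2}, but with the Modified Log-Sobolev inequality replaced by the weak Log-Sobolev type inequality of Lemma \ref{NSlemma3.3}, the price being the random constant $U_{i,\omega}$, which will be controlled through (H4). Since the interaction is nearest neighbour and the sites of $\Lambda_{k+1}$ are mutually non-adjacent, $\mathbb{E}^{\Lambda_{k+1},\omega}=\otimes_{j\in\Lambda_{k+1}}\mathbb{E}^{j,\omega}$, and by Lemma \ref{NSlemma3.3} each factor obeys a Log-Sobolev inequality with constant $\hat R+J_0U_{j,\omega}$. Writing $h=\mathcal{B}^{k,s}F$ and $w=e^{\lambda h}/\mathbb{E}^{\Lambda_{k+1}}e^{\lambda h}$, I would insert $e^{\frac{\lambda}{2}h}/(\mathbb{E}^{\Lambda_{k+1}}e^{\lambda h})^{1/2}$ into the tensorised (subadditive) entropy bound; since each $U_{j,\omega}$ is independent of $x_{\Lambda_{k+1}}$ it passes through the single site integrations, and after applying $\mu$ and the DLR equation (which in particular gives $\mu(w)=1$, as $\mathbb{E}^{\Lambda_{k+1}}w=1$) the bound splits as
\[
\mu\Big(\tfrac{1}{\mathbb{E}^{\Lambda_{k+1}}e^{\lambda h}}Ent_{\mathbb{E}^{\Lambda_{k+1}}}(e^{\lambda h})\Big)\le\frac{\lambda^{2}}{4}\hat R\sum_{j}\mu\big(|\nabla_j h|^{2}w\big)+\frac{\lambda^{2}}{4}J_0\sum_{j}\mu\big(U_{j,\omega}|\nabla_j h|^{2}w\big).
\]

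For the first sum I would apply Lemma \ref{lem2.1} with $H(x)=x^{2}$ (which satisfies (H2) for any $t>2$) and $G=w$: the gradients are propagated down to $N\subset\Lambda_{s-1}\cup\Lambda_s$ with a factor $C_1C_2^{k-s}$, and $\mu(\mathbb{E}^{\Lambda_s}\cdots\mathbb{E}^{\Lambda_k}w)=\mu(w)=1$ by DLR. Using Cauchy--Schwarz and $|N|\le a$ one has $\sum_{i\in N}|\nabla_iF|^{2}\le\sqrt{|N|}(\sum_{i\in N}|\nabla_iF|^{4})^{1/2}\le\sqrt{|N|a}\le a$ $\mu$-a.e., so this term is at most a constant multiple of $a(\frac{\lambda}{2})^{2}C_2^{k-s}$.

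The delicate term is the interaction sum, where $U_{j,\omega}$ must be detached from the gradient. I would do this by Cauchy--Schwarz, pointwise over the sites and then in $L^{2}(\mu)$,
\[
\sum_{j}\mu\big(U_{j,\omega}|\nabla_j h|^{2}w\big)\le\Big(\mu\big(\textstyle\sum_j U_{j,\omega}^{2}w\big)\Big)^{1/2}\Big(\mu\big(\textstyle\sum_j|\nabla_j h|^{4}w\big)\Big)^{1/2}.
\]
Because $U_{j,\omega}$ is independent of $x_{\Lambda_{k+1}}$ and $\mathbb{E}^{\Lambda_{k+1}}w=1$, DLR gives $\mu(U_{j,\omega}^{2}w)=\mu(U_{j,\omega}^{2})\le\check K$ by (H4), so the first factor is at most $(|\Lambda_{k+1}|\check K)^{1/2}$. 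The second factor is exactly controlled by Lemma \ref{lem2.1} with $H(x)=x^{4}$ (which satisfies (H2) with $t=4$) and $G=w$; together with the hypothesis $\sum_{i\in N}|\nabla_iF|^{4}\le a$ and $\mu(\mathbb{E}^{\Lambda_s}\cdots\mathbb{E}^{\Lambda_k}w)=1$ it is bounded by $C_1C_2^{k-s}a$. Replacing $a^{1/2}$ by $a$ (legitimate since $a\ge|N|\ge1$), the interaction term is at most a constant times $J_0\,a(\frac{\lambda}{2})^{2}|\Lambda_{k+1}|^{1/2}C_2^{(k-s)/2}$.

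The main obstacle, and the only place the argument genuinely differs from Section 2, is the cardinality factor $|\Lambda_{k+1}|^{1/2}\le(2d)^{(k+1)/2}$ produced by summing $\mu(U_{j,\omega}^{2})\le\check K$ over all of $\Lambda_{k+1}$ after the decoupling; this factor is absent from the clean propagation of Lemma \ref{lem2.1}. I would absorb it into the geometric decay by choosing $J_0$ small enough that the constant $C_2$ associated to $H(x)=x^{4}$ satisfies $(2d)C_2^{1/2}<1$, so that $|\Lambda_{k+1}|^{1/2}C_2^{(k-s)/2}\le\mathrm{const}\cdot C_3^{k-s}$ for some $0<C_3<1$. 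Merging the two terms and absorbing $\hat R$, $J_0$, $\check K$ and the constants of Lemma \ref{lem2.1} into a single $M$ then yields $aM(\frac{\lambda}{2})^{2}C_3^{k-s}$, as claimed. It is precisely this Cauchy--Schwarz step that forces the hypothesis to involve the quartic $\sum_{i\in N}|\nabla_iF|^{4}$ and the normalisation $a\ge|N|$, and that makes the ensuing concentration of the $\Phi(x)=x^{4}$ Modified Log-Sobolev type rather than of the genuine Log-Sobolev type.
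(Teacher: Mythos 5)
Your proof is correct, and its skeleton --- tensorisation of the entropy over the product measure $\mathbb{E}^{\Lambda_{k+1},\omega}$, insertion of $e^{\lambda h/2}/(\mathbb{E}^{\Lambda_{k+1}}e^{\lambda h})^{1/2}$ into the perturbed Log--Sobolev inequality of Lemma \ref{NSlemma3.3}, and the split into a $\hat R$-term and a $U_{i,\omega}$-term --- coincides with the paper's; the $\hat R$-term is also handled identically in substance (the paper uses $x^2\leq\frac12(x^4+1)$ where you use Cauchy--Schwarz, both landing on $a$ via $|N|\leq a$). Where you genuinely diverge is the interaction term. The paper first expands $\left\vert\nabla_i h\right\vert^2$ by iterating the sweeping-out Lemma \ref{lem252PapIP} $k-s$ times and only then detaches $U_{i,\omega}$ by the pointwise inequality $ab\leq\frac12(a^2+b^2)$, so the $U_{i,\omega}^2$ contribution is summed only over $\Lambda_{k+1}\cap N_{k-s}$, of cardinality at most $(2d)^{k-s}\vert N\vert\leq(2d)^{k-s}a$. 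You instead perform one Cauchy--Schwarz jointly over the sites and over $\mu$ (with weight $w$) and control the resulting quartic factor by Lemma \ref{lem2.1} with $H(x)=x^4$; this is arguably cleaner and reuses machinery already in place, but summing $\mu(U_{j,\omega}^2)\leq\check K$ over all of $\Lambda_{k+1}$ costs you $\vert\Lambda_{k+1}\vert^{1/2}\leq(2d)^{(k+1)/2}$, which grows in $k$ rather than in $k-s$, and absorbing it leaves a prefactor $(2d)^{(s+1)/2}$: your $M$ then depends on the position of $N$ through $s$, whereas the paper's constants are uniform in $N$, all $N$-dependence being carried by $a\geq\vert N\vert$. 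This still proves the lemma as literally stated, and the loss is repaired by the very support observation you invoke for the first term but do not exploit here, namely $\nabla_j h=0$ unless $dist(j,N)\leq k-s$, which lets you restrict both Cauchy--Schwarz sums to $\Lambda_{k+1}\cap N_{k-s}$ and obtain $(\check K(2d)^{k-s}a)^{1/2}(C_1C_2^{k-s}a)^{1/2}=(\check KC_1)^{1/2}((2d)C_2)^{(k-s)/2}a$, matching the paper's uniformity. You should also record explicitly that Lemma \ref{lem2.1} remains available under (H3), since Corollary \ref{cor3.2} and Lemma \ref{lem252PapIP} are proved under (H1)/(H3) --- the paper uses this tacitly as well.
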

\begin{proof}  Since interactions occur only between nearest neighbours on the lattice,  the measure $\mathbb{E}^{\Lambda_{k+1}}$ is the product measure of the single site measures i.e.  $\mathbb{E}^{\Lambda_{k+1,\omega}}=\otimes _{j \in \Lambda_{k+1,\omega}}\mathbb{E}^{j,\omega}$.
   The following product rule holds for the entropy of product measures 
$$\mathbb{E}^{ \Lambda_{k+1},\omega}(f^2log\frac{f^2}{\mathbb{E}^{ \Lambda_{k+1},\omega}f^2})\leq\sum_{i\in\Lambda_{k+1}}\mathbb{E}^{ \Lambda_{k+1},\omega}\mathbb{E}^{ i,\omega}(f^2log\frac{f^2}{\mathbb{E}^{ i,\omega}f^2})$$
(see [A-B-C]). If we use the $(LS)$ type inequality shown in Lemma \ref{NSlemma3.3} we obtain
\begin{align*}\mathbb{E}^{ \Lambda_{k+1},\omega}(f^2log\frac{f^2}{\mathbb{E}^{ \Lambda_{k+1},\omega}f^2})\leq & \hat  R\sum_{i\in\Lambda_{k+1}}\mathbb{E}^{ \Lambda_{k+1},\omega}\mathbb{E}^{ i,\omega}\left\vert \nabla_{i} f
\right\vert^2\\& +J_0\sum_{i\in\Lambda_{k+1}}\mathbb{E}^{ \Lambda_{k+1},\omega}\left(U_{i,\omega}\mathbb{E}^{ i,\omega} \left\vert \nabla_{i} f
\right\vert^2\right)
\end{align*}
Denote $h=\mathcal{B}^{k,s} F$ for $k \geq s$. If we plug $f:=\frac{e^{\frac{\lambda}{2}h}}{(\mathbb{E}^{\Lambda_{k+1}}e^{\lambda h})^\frac{1}{2}}$   we get
    \begin{align*}\frac{1}{\mathbb{E}^{\Lambda_{k+1}}e^{\lambda h}}En&t_{\mathbb{E}^{\Lambda_{k+1}}} (e^{\lambda h})\leq \hat  R\left(\frac{\lambda}{2}\right)^2\sum_{i\in\Lambda_{k+1}}\mathbb{E}^{ \Lambda_{k+1},\omega}\mathbb{E}^{ i,\omega}\left(\left\vert \nabla_i h
\right\vert^2 \frac{e^{\lambda h}}{\mathbb{E}^{\Lambda_{k+1}}e^{\lambda h}}\right)\\&+J_0\left(\frac{ \lambda }{2}\right)^{2}\sum_{i\in\Lambda_{k+1}}\mathbb{E}^{ \Lambda_{k+1},\omega}\left(U_{i,\omega}\mathbb{E}^{ i,\omega} \left(\left\vert \nabla_i h
\right\vert^2\frac{e^{\lambda h}}{\mathbb{E}^{\Lambda_{k+1}}e^{\lambda h}}\right)\right)  \end{align*}
One should notice that  for  every $j:dist(j,N)>k-s$ we have $\begin{array}{c}
\underbrace{\{\sim \{...\sim \{\sim j\} \}} \\
k-s \text{ \ times} \\
\end{array} \cap N=\emptyset$. As a consequence, concerning the quantity  $\left\vert \nabla_i h
\right\vert^2=\left\vert \nabla_i (\mathcal{B}^{k,s} F)
\right\vert^2=\left\vert \nabla_i ( \mathbb{E}^{\Lambda_k}...\mathbb{E}^{\Lambda_s} F)
\right\vert^2$ for $i\in \Lambda_{k+1}$ and $F$ with variables on $N\subset \subset \Lambda_{s-1}\cup \Lambda_s$,  from Lemma \ref{lem252PapIP} and \ref{lem2.1} we obtain that  $\left\vert \nabla_j h
\right\vert^2=0$ for every $j:dist(j,N)>k-s$. As a consequence, if we define the set $N_{k-s}:= \{j\in \mathbb{Z}^d:dist(j,N)\leq k-s\}$ we can write     \begin{align*}\frac{1}{\mathbb{E}^{\Lambda_{k+1}}e^{\lambda h}}En&t_{\mathbb{E}^{\Lambda_{k+1}}} (e^{\lambda h})\leq \hat  R\left(\frac{\lambda}{2}\right)^2\sum_{i\in\Lambda_{k+1}
\cap N_{k-s}}\mathbb{E}^{ \Lambda_{k+1},\omega}\mathbb{E}^{ i,\omega}\left(\left\vert \nabla_i h
\right\vert^2 \frac{e^{\lambda h}}{\mathbb{E}^{\Lambda_{k+1}}e^{\lambda h}}\right)\\&+J_0\left(\frac{ \lambda }{2}\right)^{2}\sum_{i\in\Lambda_{k+1}\cap N_{k-s}}\mathbb{E}^{ \Lambda_{k+1},\omega}\left(U_{i,\omega}\mathbb{E}^{ i,\omega} \left(\left\vert \nabla_i h
\right\vert^2\frac{e^{\lambda h}}{\mathbb{E}^{\Lambda_{k+1}}e^{\lambda h}}\right)\right)  \end{align*}
 If we apply the Gibbs measure in the last inequality we  obtain
  \begin{align}\label{NSeq3.10lem3.3}\nonumber\int \frac{1}{\mathbb{E}^{\Lambda_{k+1}}e^{\lambda h}}&Ent_{\mathbb{E}^{\Lambda_{k+1}}} (e^{\lambda h})d \mu \leq \hat  R\left(\frac{\lambda}{2}\right)^2\sum_{i\in\Lambda_{k+1}\cap N_{k-s}}\int \left\vert \nabla_i h
\right\vert^2 \frac{e^{\lambda h}}{\mathbb{E}^{\Lambda_{k+1}}e^{\lambda h}}d \mu\\&+J_0\left(\frac{ \lambda }{2}\right)^{2}\sum_{i\in\Lambda_{k+1}\cap N_{k-s}}\int U_{i,\omega}\mathbb{E}^{ i,\omega} \left(\left\vert \nabla_i h
\right\vert^2\frac{e^{\lambda h}}{\mathbb{E}^{\Lambda_{k+1}}e^{\lambda h}}\right)d \mu \nonumber \\    :=& \hat  R\left(\frac{\lambda}{2}\right)^2  I_1+J_0\left(\frac{ \lambda }{2}\right)^{2}  I_2  \end{align}  
 For the first term on the right hand side of (\ref{NSeq3.10lem3.3}) we can use   Lemma \ref{lem2.1}. This leads to    \begin{align*}I_1  \leq & C_2^{k-s}C_1 \int \sum_{i\in N} \vert\nabla_{i} F\vert^2\left(\mathbb{E}^{\Lambda_{s}}...\mathbb{E}^{\Lambda_{k}}\left(\frac{e^{\lambda h}}{\mathbb{E}^{\Lambda_{k+1}}e^{\lambda h}}\right)\right)d\mu\\     \leq&\frac{1}{2}  C_2^{k-s}C_1 \int\sum_{i\in  N} \vert\nabla_{i} F\vert^{4}\left(\mathbb{E}^{\Lambda_{s}}...\mathbb{E}^{\Lambda_{k}}\left(\frac{e^{\lambda h}}{\mathbb{E}^{\Lambda_{k+1}}e^{\lambda h}}\right)\right)d\mu \\ & +  \frac{1}{2}C_2^{k-s}C_1  \vert N \vert\int \mathbb{E}^{\Lambda_{s}}...\mathbb{E}^{\Lambda_{k}}\left(\frac{e^{\lambda h}}{\mathbb{E}^{\Lambda_{k+1}}e^{\lambda h}}\right)d\mu
\end{align*} 
Since  $\sum_{i\in N}\vert\nabla_{i} F\vert^{4}\leq a$ $\mu-$a.e. as well as $  \vert N \vert\leq a$ we obtain  \begin{align}\nonumber\label{rev1+1+1NSeq3.10lem3.3}I_1 & \leq C_2^{k-s}C_1a \int \mathbb{E}^{\Lambda_{s}}...\mathbb{E}^{\Lambda_{k-1}}\mathbb{E}^{\Lambda_{k}}\left(\frac{e^{\lambda h}}{\mathbb{E}^{\Lambda_{k+1}}e^{\lambda h}}\right)d\mu  \\    &   =C_2^{k-s}C_1a
\end{align}  
For the second term on the right hand side of (\ref{NSeq3.10lem3.3}), since $U_{i,\omega}=c\log\mathbb{E}^{ i,\omega}e^{\epsilon \tilde U_{i,\omega}}$ is a function that does not depend on the variable $i$, we have
  \begin{align}\label{addnew314lemma37}I_2& \nonumber=\sum_{i\in\Lambda_{k+1}\cap N_{k-s}}\int U_{i,\omega}\mathbb{E}^{ i,\omega} \left(\left\vert \nabla_i h
\right\vert^2\frac{e^{\lambda h}}{\mathbb{E}^{\Lambda_{k+1}}e^{\lambda h}}\right)d \mu \\   &=\sum_{i\in\Lambda_{k+1}\cap N_{k-s}}\int U_{i,\omega} \left\vert \nabla_i (h)
\right\vert^2\frac{e^{\lambda h}}{\mathbb{E}^{\Lambda_{k+1}}e^{\lambda h}}d \mu  \end{align}In order to bound the last term we will use the following lemma.
\begin{Lemma} \label{lem252PapIP} If the local specification $\{\mathbb{E}^{\Lambda,\omega}\}_{\Lambda\subset\subset \mathbb{Z}^d,\omega \in
\Omega}$ is such that (H0) and (H1)/(H3) is true, then, for sufficiently small $J_0$, there exists a constant  $\eta\in(0,1)$ such that
for every $i\in\Lambda_{k+1}$ 
\begin{align*}
\left\vert \nabla_{i }
(\mathbb{E}^{\Lambda_{k}}f) \right\vert ^2 &\leq 2
\mathbb{E}^{\{\sim i\}\cap \Lambda_k}\left\vert \nabla_{i } f
\right\vert^2  +\eta\mathbb{E}^{\{\sim i\}\cap \Lambda_k}\left\vert
\nabla_{\{\sim i\}\cap \Lambda_k} f\right\vert^2\end{align*}
for $k\in \mathbb{N}$.\end{Lemma}
The proof of  Lemma   \ref{lem252PapIP} will be presented in section 4.  We can calculate the gradient on the right hand side of (\ref{addnew314lemma37}) by applying   the lemma  $k-s$ times. For  $i\in\Lambda_{k+1}$ we  get 
\begin{align*}\left\vert \nabla_{i }
(h) \right\vert ^2=
\left\vert \nabla_{i }
(\mathcal{B}^{k,s} F) \right\vert ^2 \leq&2\eta^{k-s-1}E^i E^{j^i_1}E^{j^i_2}...E^{j^i_{k-s}}\left\vert
\nabla_{\{\sim j^i_{k-s-2}\}\cap\Lambda_{s-1}\cap N} F\right\vert^2\\&+\eta^{k-s}E^i E^{j^i_1}E^{j^i_2}...E^{j^i_{k-s}}\left\vert
\nabla_{\{\sim j^i_{k-s-1}\}\cap\Lambda_{s}\cap N} F\right\vert^2\end{align*}
where  we define $E^if:=\mathbb{E}^{\{\sim i\}\cap \Lambda_k}\sum_{j^i_1\in\{\sim i\}\cap\Lambda_k}f$  and for every $m=2,...,k-s$ $$E^{j^i_m}f:=\mathbb{E}^{\{\sim j^i_{m-1} \}\cap \Lambda_{k-m+1}}\sum_{j^i_{m}\in\{\sim j^i_{m-1}\}\cap\Lambda_{k-m+1}}f$$
If we denote $\tilde E^i_{k-s}:=E^i E^{j^i_1}E^{j^i_2}...E^{j^i_{k-s}}$, (\ref{addnew314lemma37}) becomes  
 \begin{align*}I_2\leq &2\eta^{k-s-1}\sum_{i\in\Lambda_{k+1}\cap N_{k-s}}\int U_{i,\omega} \tilde E^i_{k-s}\left\vert
\nabla_{\{\sim j^i_{k-s-2}\}\cap\Lambda_{s-1}\cap N} F\right\vert^2\frac{e^{\lambda h}}{\mathbb{E}^{\Lambda_{k+1}}e^{\lambda h}}d \mu \\   &+\eta^{k-s}\sum_{i\in\Lambda_{k+1}\cap N_{k-s}}\int U_{i,\omega} \tilde E^i_{k-s}\left\vert
\nabla_{\{\sim j^i_{k-s-1}\}\cap\Lambda_{s}\cap N} F\right\vert^2\frac{e^{\lambda h}}{\mathbb{E}^{\Lambda_{k+1}}e^{\lambda h}}d \mu \end{align*}
Since $ab\leq \frac{1}{2}(a^2+b^2)$ the last can be bounded be
\begin{align} \label{NSeq3.12lem3.3}\nonumber I_2 \leq  &(\eta^{k-s-1}+\frac{\eta^{k-s}}{2})\sum_{i\in\Lambda_{k+1}\cap N_{k-s}}\int U^{2}_{i,\omega}\frac{e^{\lambda h}}{\mathbb{E}^{\Lambda_{k+1}}e^{\lambda h}}d \mu\\  \nonumber &+\eta^{k-s-1}\sum_{i\in\Lambda_{k+1}\cap N_{k-s}}\int \left(\tilde E^i_{k-s}\left\vert
\nabla_{\{\sim j^i_{k-s-2}\}\cap\Lambda_{2}\cap N} F\right\vert^2\right)^2\left(\frac{e^{\lambda h}}{\mathbb{E}^{\Lambda_{k+1}}e^{\lambda h}}\right)d \mu   \\ & \nonumber \\  &  +\frac{\eta^{k-s}}{2}\sum_{i\in\Lambda_{k+1}\cap N_{k-s}}\int \left(\tilde E^i_{k-s}\left\vert
\nabla_{\{\sim j^i_{k-s-1}\}\cap\Lambda_{1}\cap N} F\right\vert^2\right)^2\left(\frac{e^{\lambda h}}{\mathbb{E}^{\Lambda_{k+1}}e^{\lambda h}}\right)d\mu \nonumber \\   :=&  (\eta^{k-s-1}+\frac{\eta^{k-s}}{2})I_{21}+\eta^{k-s-1}I_{22}+\frac{\eta^{k-s}}{2}I_{23}\end{align}
   For the first term of (\ref{NSeq3.12lem3.3}), since  $U_{i,\omega}^2$ for $i\in \Lambda_{k+1}$ does not depend on the variables $j \in \Lambda_{k+1}$ $$\mathbb{E}^{\Lambda_{k+1}}\left(U^{2}_{i,\omega}\frac{e^{\lambda h}}{\mathbb{E}^{\Lambda_{k+1}}e^{\lambda h}}\right)=U^{2}_{i,\omega}\mathbb{E}^{\Lambda_{k+1}}\left(\frac{e^{\lambda h}}{\mathbb{E}^{\Lambda_{k+1}}e^{\lambda h}}\right)=U^{2}_{i,\omega}$$ This leads to  \begin{equation}\label{NSeq3.14lem3.3}I_{21}=\sum_{i\in\Lambda_{k+1}\cap N_{k-s}}\mu (U^{2}_{i,\omega})\leq  \check K (2d)^{k-s}\vert N \vert\leq  \check K (2d)^{k-s}a \end{equation}
where above we used the bound from hypothesis (H4) and that  $\vert N \vert \leq a$.
For the second term on the right hand side of (\ref{NSeq3.12lem3.3}), we have \begin{align*}I_{22}   \leq &(2^{2d})^{k-s}\sum_{i\in\Lambda_{k+1}\cap N_{k-s}}\int \tilde E^i_{k-s}\left\vert
\nabla_{\{\sim j^i_{k-s-2}\}\cap\Lambda_{2}\cap N} F\right\vert^4\left(\frac{e^{\lambda h}}{\mathbb{E}^{\Lambda_{k+1}}e^{\lambda h}}\right)d \mu \\   \leq  & (2^{2d}2d)^{k-s}\sum_{i\in N}\int \left\vert \nabla_{i} F
\right\vert^{4}\mathbb{E}^{\Lambda_s}...\mathbb{E}^{\Lambda_k}\left(\frac{e^{\lambda h}}{\mathbb{E}^{\Lambda_{k+1}}e^{\lambda h}}\right)d \mu
\end{align*} Since     $\sum_{i\in N}\vert\nabla_{i} F\vert^{4}\leq a$ $\mu-$a.e. we obtain   \begin{align}\label{NSeq3.13lem3.3}\nonumber I_{22} \nonumber &\leq a(2^{2d}2d)^{k-s}\int \mathbb{E}^{\Lambda_s}...\mathbb{E}^{\Lambda_k}\left(\frac{e^{\lambda h}}{\mathbb{E}^{\Lambda_{k+1}}e^{\lambda h}}\right)d \mu\\ &=a(2^{2d}2d)^{k-s}
\end{align} If we work with the same way for the third term we get \begin{equation}\label{NSeq3.13lem3.3++1}  I_{23}\leq a (2^{2d}2d)^{k-s}\end{equation}
If we plug  (\ref{NSeq3.14lem3.3}), (\ref{NSeq3.13lem3.3})   and (\ref{NSeq3.13lem3.3++1}) in (\ref{NSeq3.12lem3.3}) we have
\begin{align} \label{NSeq3.15lem3.3}I_2 \leq (\eta^{k-s-1}+\frac{\eta^{k-s}}{2}) (\check K (2d)^{k-s} +(2^{2d}2d)^{k-s} )a\end{align}
If we combine together (\ref{NSeq3.10lem3.3}), (\ref{rev1+1+1NSeq3.10lem3.3})  and (\ref{NSeq3.15lem3.3}) we finally obtain
\begin{align*}\nonumber\mu(\frac{1}{\mathbb{E}^{\Lambda_{k+1}}e^{\lambda h}}&Ent_{\mathbb{E}^{\Lambda_{k+1}}} (e^{\lambda h}))\leq \hat R\left(\frac{\lambda}{2}\right)^2C_2^{k-s}C_1a\\&
+J_0\left(\frac{ \lambda }{2}\right)^{2} (\eta^{k-s-1}+\frac{\eta^{k-s}}{2})(\check K (2d)^{k-s} +(2^{2d}2d)^{k-s} )a\end{align*}
The lemma follows for appropriate constant $M>0$ and $C_3<1$, since $\eta$ and $C_2$ can be as small as we like for  sufficiently small $J_0$.
\end{proof}
The rest of the proof of Theorem \ref{NStheorem1} follows  like  the proof of Theorem \ref{theorem1}  in the previous section. With the use of  Lemma \ref{lem2.3} and Lemma \ref{NSlem3.6}  the proof of the theorem follows for $r\geq R=\left(\omega_H(2)\ddot C \omega^*_H\left( \frac{2}{\omega_H(2)c } \right)\right)^{-1}8\ln2$ and $a=\omega_H(2)r \geq \vert N \vert$ for $H(x)=x^4$. 
\section{Proof of Convergence and Sweeping Out Relations.}\label{convergence}
In this section the sweeping out relations of Lemma \ref{lem252PapIP} and  Corollary \ref{cor3.2} are presented, together with the convergence of $\mathcal{B}^n f$ $\mu-$a.e. to the Gibbs measure $\mu$.
 
~

\noindent
\textbf{\textit{Proof of Lemma \ref{lem252PapIP}.}}   For any $k\in \mathbb{N}$ and $i\in \Lambda_{k+1}$ we can write
\begin{align*}
\left\vert \nabla_{i}
(\mathbb{E}^{\Lambda_{k}}f) \right\vert ^2 \leq
\left\vert \nabla_{i }
(\mathbb{E}^{\{\sim i\}\cap \Lambda_k}f) \right\vert ^2
\end{align*}
Since the integration in the probability measure $\mathbb{E}^{^{\{\sim i\}\cap \Lambda_k},\omega}(d X_{^{\{\sim i\}\cap \Lambda_k}})$ is over  $\{ \sim i\}\cap \Lambda_k=\{j\in\Lambda_k:j \sim i\}$ while  $i\in \{\sim \left\{\{\sim i\}\cap \Lambda_k\right\}\}$, the variable $x_i$ appears in the boundary conditions of the integral $\mathbb{E}^{^{\{\sim i\}}\cap \Lambda_k,\omega}$. So,   we obtain
\begin{align}\label{eq220PapIPb}\nabla_i \mathbb{E}^{^{\{\sim i\}\cap \Lambda_k},\omega}f=&\nabla_i \left(\frac{\int f e^{-H^{\{\sim i \}\cap \Lambda_k, \omega}}dX_{\{\sim i\}\cap \Lambda_k}} {\int e^{-H^{\{\sim i \}\cap \Lambda_k, \omega}}dX_{\{\sim i\}}}\right)\nonumber\\ \nonumber=&\mathbb{E}^{^{\{\sim i\}\cap \Lambda_k},\omega}\left(\nabla_i f\right)+\mathbb{E}^{^{\{\sim i\}\cap \Lambda_k},\omega}f(-\nabla_i H^{\{\sim i \}\cap \Lambda_k, \omega})\\ \nonumber&-\mathbb{E}^{^{\{\sim i\}\cap \Lambda_k},\omega}f\mathbb{E}^{^{\{\sim i\}\cap \Lambda_k},\omega}(-\nabla_i H^{\{\sim i \}\cap \Lambda_k, \omega})\\=&\nonumber
\mathbb{E}^{^{\{\sim i\}\cap \Lambda_k},\omega}\left(\nabla_i f\right)+\mathbb{E}^{^{\{\sim i\}\cap \Lambda_k},\omega}f(-\sum_{j\in\{\sim i\}\cap \Lambda_k}J_{ij}\nabla_i V(x_{i}, x_{j}))\\ \nonumber &-\mathbb{E}^{^{\{\sim i\}\cap \Lambda_k},\omega}f\mathbb{E}^{^{\{\sim i\}\cap \Lambda_k},\omega}(-\sum _{j\in\{\sim i\}\cap \Lambda_k}J_{ij}\nabla_i V(x_{i}, x_{j}))\\ \leq&
\mathbb{E}^{^{\{\sim i\}\cap \Lambda_k},\omega}\left(\nabla_i f\right)+J_{0}\mathbb{E}^{^{\{\sim i\}\cap \Lambda_k},\omega}\left(\vert f-\mathbb{E}^{\{\sim i\}\cap \Lambda_k}f\vert \vert \mathcal{U}_i\vert \right)
\end{align} 
where above we have denoted
\begin{equation}\label{eq220-1interPapIP} W_i=\sum_{j\in\{\sim i\}\cap \Lambda_k}\nabla_i V(x_{i}, x_{j}) \text{\; and \;} \mathcal{U}_i=W_i-\mathbb{E}^{\{ \sim i\}\cap \Lambda_k}W_i\end{equation}
Hence, for any $i\in \Lambda_{k+1}$ from  (\ref{eq220PapIPb}) we get
that\begin{align}\label{eq221PapIP}\nonumber\left\vert \nabla_{i }
(\mathbb{E}^{\Lambda_{k}}f) \right\vert ^2& \leq2
\left\vert \mathbb{E}^{\{\sim i\}\cap \Lambda_k}\nabla_{i } f
\right\vert
^2+
2J_{0}^{2}
\left\vert\mathbb{E}^{\{\sim i\}\cap \Lambda_k}(f-\mathbb{E}^{\{\sim i\}\cap \Lambda_k}f)\mathcal{U}_i \right\vert ^2\\ \leq 2 &
\mathbb{E}^{\{\sim i\}\cap \Lambda_k}\left\vert \nabla_{i } f
\right\vert^2+2J_{0}^2\mathbb{E}^{\{\sim i\}\cap \Lambda_k}\left|f-\mathbb{E}^{\{\sim i\}\cap \Lambda_k}f\right|^2\mathbb{E}^{\{\sim i\}\cap \Lambda_k}\left|\mathcal{U}_i\right|^2
\end{align}
using H\"older's inequality.  Since interactions occur only between nearest neighbours in the lattice, we have that no interactions occur between points of the set $\{\sim i\}=\{j:j \sim i\}$, which consists only of the $2d$  points in $\mathbb{Z}^d$ that are nearest neighbours of the point $i$ (notice that: $i\notin \{\sim i\}$).  In the case of (H1), since   the measure $\mathbb{E}^{\{\sim i\},\omega}$ is the product measure of  single site measures i.e.  $\mathbb{E}^{\{\sim i\}\cap \Lambda_k,\omega}=\otimes_{j \in\{\sim i\}\cap \Lambda_k}\mathbb{E}^{j,\omega}$, because of hypothesis  (H1) and Remark \ref{remark0}  we conclude that   the product measure $\mathbb{E}^{\{\sim i\},\omega}$ also  satisfies the  $(MLS(H_\Phi))$  inequality with the same constant $c$.  By Remark \ref{remark0}, it then follows that $\mathbb{E}^{\{\sim i\}\cap \Lambda_k,\omega}$  satisfies the Spectral Gap inequality with constant $c_0= \frac{c}{2}$.
In the case of (H3), from  Corollary \ref{NScor3-5} and Remark \ref{remark0} the same follows for a constant $\hat c$, s.t. $c_0\leq \hat c$.   Hence we have
\begin{equation}
\label{eq222PapIP}
\mathbb{E}^{\{\sim i\}\cap \Lambda_k}\left|f-\mathbb{E}^{\{\sim i\}\cap \Lambda_k}f\right|^2\leq \hat c\mathbb{E}^{\{\sim i\}\cap \Lambda_k}\left\vert
\nabla_{\{\sim i\}\cap \Lambda_k} f\right\vert^2
\end{equation}
If we use the Spectral Gap inequality again for the product measure $\mathbb{E}^{\{\sim i\}\cap \Lambda_k}$  we get \begin{align}
\label{eq223PapIP}
\mathbb{E}^{\{\sim i\}\cap \Lambda_k}\left|\mathcal{U}_i\right|^2=& \mathbb{E}^{\{\sim i\}\cap \Lambda_k}\left|W_i-\mathbb{E}^{\{\sim
i\}\cap \Lambda_k}W_i\right|^2\leq  \hat c\sum_{_{ j \in\{\sim i\}\cap \Lambda_k}}\mathbb{E}^{\{\sim i\}\cap \Lambda_k}\left\vert \nabla_{j} W_i\right\vert
^2 \nonumber\\
&\leq  \hat c\sum_{_{ j \in\{\sim i\}\cap \Lambda_k}}\mathbb{E}^{\{\sim i\}\cap \Lambda_k}\left\vert \nabla_{j} \nabla_iV(x_{i}, x_{j})\right\vert
^2\leq 2d \hat cM^2\end{align}
where $M=\|\nabla_i\nabla_j V(x_i, \omega_j)\|_\infty < \infty$ ( by hypothesis (H0) ).
If we combine \eqref{eq221PapIP}, \eqref{eq222PapIP} and \eqref{eq223PapIP} we obtain
\begin{align}\label{newequat4.6-4.7lem4.1}
\left\vert \nabla_{i }
(\mathbb{E}^{\Lambda_{k}}f) \right\vert ^2 &\leq 2
\mathbb{E}^{\{\sim i\}\cap \Lambda_k}\left\vert \nabla_{i } f
\right\vert^2  +2c2d \hat cM^2J^2_0\mathbb{E}^{\{\sim i\}\cap \Lambda_k}\left\vert
\nabla_{\{\sim i\}\cap \Lambda_k} f\right\vert^2\end{align}
Therefore, choosing $J_0$ sufficiently small so that  $ \eta=2c2d \hat cM^2J^2_0<1$, the lemma follows. \qed

~

\noindent
\textbf{\textit{Proof of Corollary \ref{cor3.2}.}} For any $k\in \mathbb{N}$ we can write
\begin{align*}
\left\vert \nabla_{\Lambda_{k+1} }
(\mathbb{E}^{\Lambda_{k}}f) \right\vert ^2 = \sum_{i\epsilon \Lambda_{k+1}}\left\vert \nabla_{i }
(\mathbb{E}^{\Lambda_{k}}f) \right\vert ^2  \leq\sum_{i\epsilon\Lambda_{k+1}}
\left\vert \nabla_{i }
(\mathbb{E}^{\{\sim i\}\cap \Lambda_k}f) \right\vert ^2
\end{align*}
If we use Lemma \ref{lem252PapIP} we then obtain
\begin{align}\label{verynew4.7-8}\nonumber 
\left\vert \nabla_{\Lambda_{k+1} }
(\mathbb{E}^{\Lambda_k}f) \right\vert ^2\leq & 2\sum_{i\in \Lambda_{k+1}}
\mathbb{E}^{\{\sim i\}\cap \Lambda_k}\left\vert \nabla_i f
\right\vert^2  + \eta\sum_{i\in \Lambda_{k+1}}\mathbb{E}^{\{\sim i\}\cap \Lambda_k}\left\vert
\nabla_{\{\sim i\}\cap \Lambda_k} f\right\vert^2 \\  \leq  & 2\sum_{i \in \Lambda_{k+1}} \mathbb{E}^{\{\sim i\}\cap \Lambda_k} \left\vert \nabla_{i } f
\right\vert^2+\eta\sum_{i\in \Lambda_k}\sum_{j\sim i}\mathbb{E}^{\{\sim j\}\cap \Lambda_k}\left\vert \nabla_i f
\right\vert^2 \end{align}
where  $\eta<1$, for $J_0$ sufficiently small. For $H$ as in (H2) the Corollary follows for appropriate constants $D$ and $\eta<1$ for $J_0$ sufficiently small. \qed

~

 \noindent 
\textbf{\textit{Proof of Lemma \ref{lem254PapIP}.}}
 Following [G-Z] (see also [Pa]) we will  show that in   $L_1(\mu)$ we have   $lim_{n\rightarrow \infty}\mathcal{B}^{n,s} f=\mu f$ for any $f:\mathbb{R}^N\rightarrow \mathbb{R}$, with $N\subset \subset \Lambda_{s-1}\cup \Lambda_s$.
  For $k\in\mathbb{N}$ we have that
 \begin{align}\mu\vert\mathbb{E}^{^{\Lambda_{n}} } f- \mathbb{E}^{^{\Lambda_{n+1}} }\mathbb{E}^{\Lambda_{n}} f\vert^2&=\mu\mathbb{E}^{^{\Lambda_{n+1}} }\vert\mathbb{E}^{^{\Lambda_{n}} } f- \mathbb{E}^{^{\Lambda_{n+1}} }\mathbb{E}^{^{\Lambda_{n}} } f\vert^2  \label{3.11}\leq  \hat c\mu\left\vert \nabla_{^{\Lambda_{n+1}} }(\mathbb{E}^{^{\Lambda_{n}} } f
)\right\vert^2\end{align}          
The last inequality due to the  fact that   the measures  $\mathbb{E}^{^{\Lambda_{n+1}} }$  satisfy  the Spectral Gap inequality with constants  independently of the boundary conditions, as explained in the proof of  Lemma \ref{lem252PapIP}. If we use relationship (\ref{verynew4.7-8}) we get
$$\mu\vert\mathbb{E}^{^{\Lambda_{n}} } f- \mathbb{E}^{^{\Lambda_{n+1}} }\mathbb{E}^{\Lambda_{n}} f\vert^2\leq  \hat c2\mu \vert \nabla_{\Lambda_{n+1}}f\vert^2+\hat cC_4\mu \vert \nabla_{\Lambda_{n}}f\vert^2$$  
where $C_4 =2d \eta$.  From the last inequality we obtain that for any   $n\in \mathbb{N}$, 
 \begin{align*}\mu\vert \mathcal{B}^{n+s+1,s}f- \mathcal{B}^{n+s,s} f\vert^2 &\leq  \hat c2 \mu \vert \nabla_{\Lambda_{n+s+1}}(\mathcal{B}^{n+s-1,s} f)\vert^2+\hat cC_4\mu \vert \nabla_{\Lambda_{n+s}}(\mathcal{B}^{n+s-1,s} f)\vert^2\\  &=\hat cC_4\mu \vert \nabla_{\Lambda_{n+s}}(\mathcal{B}^{n-1+s,s} f)\vert^2\end{align*}
 If we use  relationship (\ref{verynew4.7-8}) to bound the last expression we have the following 
\begin{equation} \label{BorelCant1} \mu\vert \mathcal{B}^{n+s+1,s}f- \mathcal{B}^{n+s,s} f\vert^2 \leq \hat cC_4^{n-1}\left(2\mu\left\vert \nabla_{\Lambda_{n+s+1}} f
\right\vert^2+C_4\mu\left\vert \nabla_{\Lambda_{n+s}} f
\right\vert^2\right)\end{equation}  If we define the sets $A_{n}=\{\vert \mathcal{B}^{n+s,s}f- \mathcal{B}^{n+s+1} f\vert \geq (\frac{1}{2})^n\}$ we obtain 
\begin{equation*}\mu (A_n)=\mu\left( \{\vert \mathcal{B}^{n+s,s}f- \mathcal{B}^{n+s+1,s} f\vert \geq (\frac{1}{2})^n\} \right)\leq 2^{2n}\mu\vert \mathcal{B}^{n+s,s}f- \mathcal{B}^{n+s+1,s} f\vert^2  \end{equation*}by Chebyshev inequality. If we use (\ref{BorelCant1})  to bound the last inequality we have 
\begin{equation*}\mu (A_n)\leq \frac{1}{C_4}(4C_4 )^{n}\hat c\left(2\mu\left\vert \nabla_{\Lambda_{n+s+1}} f
\right\vert^2+C_4\mu\left\vert \nabla_{\Lambda_{n+s}} f
\right\vert^2\right)
\end{equation*}
We can choose $J_0$ sufficiently small such that $4C_4<\frac{1}{2}$ in which case we get that \begin{equation*}\sum_{n=0}^\infty\mu (A_n)\leq\left(\sum_{n=0}^\infty\frac{1}{2^n} \right) \frac{1}{C_4}\hat c\left(2\mu\left\vert \nabla_{\Gamma_1} f
\right\vert^2+\tilde\eta\mu\left\vert \nabla_{\Gamma_0} f
\right\vert^2\right)<\infty
\end{equation*}   From the  Borel-Cantelli lemma, only finite number of the sets $A_n$ can occur, which implies that  the sequence $\{\mathcal{B}^{n,s} f\}_{n \in \mathbb{N}}$  is a Cauchy sequence and that it converges $\mu-$almost surely. Say $$\mathcal{B}^{n,s}f\rightarrow \theta (f) \   \   \   \   \   \mu-\text{a.e.}$$ We will first show that $\theta (f)$  is a constant, i.e. it does not depend on variables on $\Gamma_0$ or $\Gamma_1$. To show that, first notice that  $\mathcal{B}^{n,s}(f)$ is a function on $\Gamma_1$ and $\Gamma_0$ when $n$  is even and odd respectively, which implies that the limits  $$\theta_{o}(f)=\lim_{n \text{\ odd}, n\rightarrow \infty}\mathcal{B}^{n,s}f \text{ \ and \ }\theta_e(f)=\lim_{n \text{\ even}, n\rightarrow \infty}\mathcal{B}^{n,s}f$$ do not depend on variables on $\Gamma_0$ and $\Gamma_1$ respectively.    Since both the  subsequences $\{\mathcal{B}^{n,s}f\}_{n\text{\ even}}$ and $\{\mathcal{B}^{n,s}f\}_{n\text{\ odd}}$  converge to $\theta (f)$ $\mu-$a.e.  we have that $$\theta_{o}(f)=\theta(f)=\theta_{e}(f)$$
 which implies that $\theta (f)$ is a constant. From that we obtain that \begin{equation}\label{BorelCant6} \mu \left( \theta (f) \right)=\theta(f)
\end{equation} 
Since the sequence $\{ \mathcal{B}^{n,s} f\}_{n \in \mathbb{N}}$ 
 converges $\mu-$a.e, the same holds for  the sequence $\{ \mathcal{B}^{n,s} f-\mu \mathcal{B}^{n,s} f\}_{n \in \mathbb{N}}$. We have  \begin{equation}\label{BorelCant6.5}\lim_{n\rightarrow \infty}( \mathcal{B}^{n,s} f-\mu \mathcal{B}^{n,s} f)=\theta (f)-\mu\left( \theta(f) \right) =\theta (f)-\theta (f)=0\end{equation}
where above we used (\ref{BorelCant6}). On the other side, we also have 
 \begin{equation}\label{BorelCant7}\lim_{n\rightarrow \infty}( \mathcal{B}^{n,s} f-\mu \mathcal{B}^{n,s} f)=\lim_{n\rightarrow \infty}( \mathcal{B}^{n,s} f-\mu f)=\theta (f)-\mu (f)
\end{equation} 
From (\ref{BorelCant6.5}) and (\ref{BorelCant7}) we get that 
$$\theta (f)=\mu (f)$$
We finally obtain $$\lim_{n\rightarrow \infty} \mathcal{B}^{n,s} f=\mu f,  \  \   \mu \  \  a.e. $$
\qed 
 \section{Conclusion}
In the present work we have investigated concentration of measure properties for the infinite dimensional Gibbs measure for local specifications with interactions with bounded second derivatives. In particular, we saw that  the infinite dimensional Gibbs measure satisfies a concentration inequality similar to the one obtained recently by [B-R2] for the product measure. Furthermore, similar results where obtained in the case where the one site boundary-free measure satisfies a Log-Sobolev inequality. 
 
In this paper we have been concerned with local specifications that have interactions that grow slower than quadratic. Concerning the (LSq) inequality for the infinite dimensional Gibbs measure the problem discussed in [I-P] is also  addressed in [Pa] for the case where the interactions are such that $$\Vert \nabla_i \nabla_j V(x_i,x_j)\Vert_\infty=+\infty $$
It is interesting to investigate  whether the concentration inequality obtained in Theorem \ref{theorem1} will also hold in the case of interactions that increase faster than quadratic as in [Pa].

Furthermore, one can try to prove the stronger  result obtained in [I-P] for the Modified Log-Sobolev inequality in the case where the interactions increase slower than a quadratic. That is, to show that  for sufficiently small $J_0$, the corresponding infinite dimensional Gibbs measure $\mu$    satisfies the  inequality
\begin{equation*}\mu \left(\vert f\vert^2\log \frac{\vert f\vert^2}{\mu \vert f \vert^2} \right)\leq C \int H_\Phi\left(\frac{\nabla f}{f}\right)f^2d\mu
\end{equation*} for some positive constant $C$. Then the concentration inequality shown here will  follow as a consequence.
 
 Concerning the perturbation result related to the Log-Sobolev inequality presented in Section 3, it is interesting to try to extend the result to the case of ($LS_q$) inequalities with $1<q<2$. The difficulty of this lies mainly in the fact that the perturbation result of [G-R] about the Spectral Gap inequality (SG2) does not seem to apply directly in the case of ($SG_q$) inequalities for $1<q<2$. If this is shown then the rest of the proof will follow exactly as in the case of $q=2$. In this case the  Gibbs measure will satisfy (\ref{lastrel}) with $\Phi(x)=\vert x \vert^{2q}$.

\section*{Acknowledgments}
The author wishes to thank Prof P. Cattiaux and F. Barthe  for  introducing him to the subject.

 \bibliographystyle{alpha}
\markboth{\textbf{Bibliography}}{\textbf{Bibliography}}
\addcontentsline{toc}{chapter}{Bibliography}

\end{document}